\def\Q{\mathbb{Q}}
\def\Z{\mathbb{Z}}
\def\N{\mathbb{N}}
\def\cC{{\mathcal C}}
\def\G{\Gamma}
\def\g{\gamma}
\def\a{\alpha}
\def\b{\beta}
\def\L{\Lambda}
\def\FP{{\rm FP}}
\def\FR{{\rm FR}}
\def\F{{\rm F}}
\def\<{\langle}
\def\>{\rangle}
\def\sg#1{\langle { #1} \rangle}
\def\isom{\cong}
\def\nc#1{\langle\langle { #1} \rangle\rangle}
\def\ab#1{#1_{\rm ab}}
\def\go#1{H_1(#1,\Z)/{\rm (torsion)}}
\def\ee#1{\exists{{\rm{Env}}(#1)}}
\def\eer#1{\exists{{\rm{Env}}_0(#1)}}
\def\Nil{{\rm{Nilp}}_\exists}
\def\P{\mathcal{P}}
\def\ssm{\smallsetminus}
\def\ch{\binom}
\newtheorem{letterthm}{Theorem}
\newtheorem{lettercly}[letterthm]{Corollary}
\newtheorem{theorem}{Theorem}[section]
\newtheorem{thm}[theorem]{Theorem}
\newtheorem{lemma}[theorem]{Lemma}
\newtheorem{prop}[theorem]{Proposition}
\newtheorem{proposition}[theorem]{Proposition}
\newtheorem{addendum}[theorem]{Addendum}
\newtheorem{corollary}[theorem]{Corollary}
\theoremstyle{definition}
\newtheorem{remark}[theorem]{Remark} 
\newtheorem{remarks}[theorem]{Remarks} 
\newtheorem{definition}[theorem]{Definition} 
\newtheorem{example}[theorem]{Example} 
\newenvironment{pf}{\par\medskip\noindent\textit{Proof.}~}{\hfill $\square$\par\medskip}
\begin{document}

\catcode`\@=11
\def\serieslogo@{\relax}
\def\@setcopyright{\relax}
\catcode`\@=12
 
\title[Finite presentation and residually free groups]{On the finite presentation of subdirect products and 
the nature of residually free groups}

\author[Bridson]{Martin R.~Bridson}
\address{Martin R.~Bridson\\
Mathematical Institute \\
24--29 St Giles'\\
Oxford OX1 3LB  \\ 
U.K. }
\email{bridson@maths.ox.ac.uk}

\author[Howie]{James Howie}
\address{ James Howie\\
Department of Mathematics and the Maxwell Institute for Mathematical Sciences\\
Heriot--Watt University\\
Edinburgh EH14 4AS }
\email{ jim@ma.hw.ac.uk}

\author[Miller]{ Charles~F. Miller~III }
\address{ Charles~F. Miller~III\\
Department of Mathematics and Statistics\\
University of Melbourne\\
Melbourne 3010, Australia }
\email{ c.miller@ms.unimelb.edu.au }

\author[Short]{ Hamish Short }
\address{ Hamish Short \\
L.A.T.P., U.M.R. 7353\\
Centre de Math\'ematiques et d'Informatique\\
39 Rue Joliot--Curie\\
Universit\'e d'Aix-Marseille, F--13453\\
Marseille cedex 13, France }
\email{ hamish@cmi.univ-mrs.fr }
 
\thanks{This work grew out of a project funded by  l'Alliance Scientific grant \# PN 05.004.
Bridson was supported in part by an EPSRC Senior Fellowship
and a Royal Society Nuffield Research Merit Award.  
Howie was supported in part by Leverhulme Trust grant F/00 276/J}

\subjclass{Primary 20F65, 20E08,20F67}

\keywords{Virtual surjection to pairs, subdirect products, residually free groups, limit groups, finitely presentation, algorithms}


\begin{abstract} 
We establish {\em{virtual surjection to pairs}} (VSP) as
 a general criterion for the finite presentability of 
subdirect products of groups: if $\G_1,\dots,\G_n$ are finitely
presented and $S<\G_1\times\dots\times\G_n$ projects to a 
subgroup of finite index in each $\G_i\times\G_j$, then $S$ is 
finitely presentable, indeed there is an algorithm that will construct a finite
presentation for $S$.

We use the VSP criterion to characterise the
finitely presented residually free groups.
We prove that the
class of such groups is recursively enumerable.
We describe an algorithm that,
given a finite presentation of a residually free group,
constructs a canonical embedding into a direct product of finitely many limit groups.
We solve
the (multiple) conjugacy problem and membership problem for finitely presentable subgroups
of residually free groups. We also prove that there is an algorithm that, given a finite generating
set for such a subgroup, will construct a finite presentation.

New families of subdirect
products of free groups are constructed, including the first examples
of finitely presented subgroups that are neither  ${\rm{FP}}_\infty$ 
nor of Stallings-Bieri type.   
 \end{abstract}

\maketitle

\section{Introduction}

A very challenging problem is to determine 
which subgroups $S< G_1\times\dots\times G_n$ of a
direct product of finitely presented groups are themselves finitely presented. Indeed this
problem is subtle even when the $G_i$ are free groups. 

Some terminology is useful for describing how a subgroup 
sits inside a direct product. A subgroup of a direct
product of groups is termed a {\em subdirect product} if its projection to 
each factor is surjective.  A subdirect product is said to be {\em{full}} 
if it intersects each of the direct factors non-trivially.  A subgroup 
$S<G_1\times\dots\times G_n$ 
 is said to be {\em virtually surjective on pairs}  (VSP)
if for all $i\neq j\in\{1,\dots,n\}$, the
projection $p_{ij}(S)\subset G_i\times G_j$ has finite index. 
(We implicitly assume that $n\ge2$.)

In  \cite{BM} we showed that the full subdirect products 
of non-abelian free and surface groups which are finitely presented must satisfy the VSP condition, from
which it follows that they contain a term of the lower central series
of a subgroup of finite index in the direct product.  The VSP condition also played an important role 
in our previous work on subdirect products of limit groups \cite{BHMS1}.
The first purpose of this article is to establish VSP as a 
criterion for the finite presentability of subgroups of more general direct products of groups. 
We remind the reader that a subgroup $S<\G$ is termed {\em separable} if for every 
$\gamma\in\G\ssm S$ there exists a normal subgroup  $K\triangleleft\G$ of finite index such that  
$\gamma\notin SK$.

\begin{letterthm}[The VSP Criterion]\label{t:pairs} 
Let $S<G_1\times\dots\times G_n$ be a
subgroup of a direct product of finitely presented groups. If 
$S$ is virtually surjective on pairs (VSP),
then it is finitely presented and separable.
\end{letterthm}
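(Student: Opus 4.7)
\textit{Plan.} The proof proceeds by induction on $n$. As a preliminary reduction, one replaces each $G_i$ by $p_i(S) \leq G_i$, which has finite index in $G_i$ by the VSP hypothesis (apply VSP to any pair $(i,j)$ and project) and so is itself finitely presented, since finite-index subgroups of finitely presented groups are finitely presented. Hence one may assume that $S$ is a subdirect product. The base case $n=2$ is immediate: VSP then says $S=p_{12}(S)$ has finite index in the finitely presented group $G_1\times G_2$, so $S$ is finitely presented, and separability is automatic via the normal core of $S$ in $G_1\times G_2$.

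For the inductive step $n\geq3$, set $\bar S = p_{1,\dots,n-1}(S)\leq G_1\times\dots\times G_{n-1}$. Since the pair projections from $\bar S$ coincide with those from $S$ (for indices $i,j\leq n-1$), $\bar S$ is VSP in the smaller product and so, by the inductive hypothesis, is finitely presented and separable. Viewing $S$ as a subgroup of $\bar S\times G_n$, a direct calculation identifies $S$ with the fibre product $\bar S\times_Q G_n$ over the common quotient $Q\cong \bar S/M\cong G_n/L$, where $M=\{\bar s\in\bar S:(\bar s,1)\in S\}$ and $L=S\cap G_n$. To conclude that $S$ is finitely presented, I apply an asymmetric form of the $1$-$2$-$3$ theorem of Baumslag--Bridson--Miller--Short to this fibre product; the inputs are the finite presentations of $\bar S$ and $G_n$ together with the finite presentations of all pair projections $p_{in}(S)$ (each a finite-index, hence finitely presented, subgroup of $G_i\times G_n$), which should together force $Q$ to be finitely presented and $M$, $L$ to be normally finitely generated. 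Separability is inherited by combining the inductive separability of $\bar S$ with the (trivial) separability of the finite-index subgroups $p_{ij}(S)\leq G_i\times G_j$: any $\gamma\notin S$ fails to lie either in $\bar S$ or in some $p_{in}(S)$, and the corresponding finite-index normal subgroup separates $\gamma$ from $S$.

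The principal obstacle is the application of the $1$-$2$-$3$ theorem to this fibre product. The Stallings--Bieri subgroups of $F_2^n$ (which are VSP but have $L$ free of infinite rank) show that $L$ and $M$ can fail to be finitely generated as abstract groups and are only normally finitely generated, so the classical $1$-$2$-$3$ theorem, which demands finitely generated kernels and a quotient of type $F_3$, is not directly applicable. The proof must therefore rely on a strengthened version adapted to the VSP setting, exploiting the finite presentations of \emph{all} pair projections $p_{in}(S)$ simultaneously in order to assemble a finite presentation of $S$ from pieces coming from $\bar S$, $G_n$, and the compatibility data encoded in $Q$.
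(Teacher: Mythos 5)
Your overall architecture (reduce to the subdirect case, induct on $n$, realise $S$ as the fibre product of $\bar S\to Q$ and $G_n\to Q$, and invoke an asymmetric 1-2-3 theorem) is exactly the paper's, but you have misidentified the "principal obstacle" and in doing so left the real gap unfilled. The asymmetric 1-2-3 theorem requires only \emph{one} of the two kernels to be finitely generated, and the kernel that matters here is $M=S\cap\ker(p_n)$, not $L=S\cap G_n$. Your Stallings--Bieri example does not show that $M$ can fail to be finitely generated: for $S=\ker(F_2^n\to\Z)$ one has $M=\ker(F_2^{n-1}\to\Z)$, which \emph{is} finitely generated once $n-1\ge 2$, i.e.\ in every case the induction actually handles. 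The genuine content you are missing is the proof that VSP forces $M$ to be finitely generated as a group. This is the paper's Proposition 3.2(3), proved by combining three facts: each $p_i(M)$ has finite index in $G_i$ (hence is finitely generated); each $L_i=S\cap G_i$ is the normal closure of a finite set (because $G_i/L_i$ is finitely generated virtually nilpotent, hence finitely presented); and $M/(L_1\times\cdots\times L_{n-1})$ embeds in the finitely generated virtually nilpotent group $\prod_i G_i/L_i$, hence is finitely generated. You also never verify the type-$\mathrm{F}_3$ hypothesis on $Q$: "finitely presented" is not enough for the 1-2-3 theorem. The paper gets $\mathrm{F}_3$ (indeed $\mathrm{F}_\infty$) from the commutator identity showing $\gamma_{n-1}(G_n^0)\subset S\cap G_n$ for a finite-index $G_n^0\le G_n$, so that $Q=G_n/L$ is virtually nilpotent. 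No "strengthened version adapted to the VSP setting" is needed; the stated asymmetric theorem applies verbatim once these two lemmas are in hand.

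The separability argument is also incorrect as stated. An element $\gamma\notin S$ can perfectly well satisfy $q(\gamma)\in\bar S$ and $p_{ij}(\gamma)\in p_{ij}(S)$ for all $i,j$: for the Stallings--Bieri group every pair projection is all of $G_i\times G_j$ and $q$ maps $S$ onto $G_1\times\cdots\times G_{n-1}$, yet $S$ is proper. So "fails to lie in $\bar S$ or in some $p_{in}(S)$" is not a valid dichotomy, and separability cannot be inherited this way. The correct route again passes through the lower central series: after replacing $G_i^0$ by its normal core, $N:=\prod_i\gamma_{n-1}(G_i^0)$ is normal in $D$, contained in $S$, and $D/N$ is virtually nilpotent, hence subgroup separable; a finite-index subgroup of $D/N$ separating $S/N$ from $\gamma N$ pulls back to one separating $S$ from $\gamma$.
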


Note that we do not assume,{\em  a priori}, that the subgroup $S$ is
finitely generated.
The converse of Theorem \ref{t:pairs} is false in general; even
finitely presented full subdirect products need not satisfy VSP.  For example, if
$N$ is a finitely-generated torsion-free nilpotent group that is not  
cyclic, and if  $\phi: N\times N\to\Z$ is a homomorphism whose restriction to each factor is non-trivial,
then the kernel of $\phi$ is a finitely presented, separable full subdirect product without VSP. 

An essential ingredient in the proof of Theorem \ref{t:pairs} is  the following
asymmetric version of the 1-2-3 Theorem of \cite{BBMS}.

\begin{letterthm}[Asymmetric 1-2-3 Theorem]\label{t:123}
Let $f_1:\G_1\to Q$ and $f_2:\G_2\to Q$ be surjective group homomorphisms. Suppose
that $\G_1$ and $\G_2$ are finitely presented, that  $Q$ is
of type ${\rm{F}}_3$, and that
at least one of $\ker f_1 $ and $\ker f_2 $ is finitely generated. Then
the fibre product of $f_1$ and $f_2$,
$$ P = \{(g,h) \mid f_1(g)=f_2(h)\}\subset\G_1\times\G_2,$$
is finitely presented.
\end{letterthm}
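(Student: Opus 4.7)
The plan is to adapt the strategy of the original (symmetric) 1-2-3 Theorem of \cite{BBMS} to the weaker asymmetric hypothesis. Without loss of generality assume $N_1 := \ker f_1$ is finitely generated, and fix a finite generating set $Z_1 \subset N_1$. The crucial preliminary point is that although $N_2$ need not be finitely generated as a group, it is always finitely normally generated in $\G_2$: after Tietze transformations one can choose compatible finite presentations $\G_i = \sg{X \mid R_i}$ on a common generating set $X$ together with a finite presentation $Q = \sg{X \mid R}$ satisfying $R_1 \cup R_2 \subseteq R$, whereupon $Z_2 := R \ssm R_2$ normally generates $N_2$ in $\G_2$. Using the $\F_3$ hypothesis on $Q$, I would also fix a finite set of identities $\{I_1, \dots, I_k\}$ among the relators of $R$ generating $\pi_2$ of the presentation $2$-complex of $\sg{X \mid R}$ as a $\Z Q$-module.

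I would then present $P$ on the finite generating set $\tilde X \cup (Z_1 \times \{1\}) \cup (\{1\} \times Z_2)$, where $\tilde X := \{(\tilde x_1, \tilde x_2) : x \in X\} \subset P$ is built from compatible lifts $\tilde x_i \in \G_i$ of each $x \in X$. These generate $P$: the $\tilde X$ project diagonally onto $Q$, the $(z,1)$ generate $N_1 \times \{1\}$, and the $\tilde X$-conjugates of $\{1\} \times Z_2$ exhaust $\{1\} \times N_2$ (they realise the $\G_2$-action on $N_2$ under which $Z_2$ normally generates $N_2$). For the defining relations I propose: (i) for each $r \in R$, a relation $r(\tilde X) = u_r v_r$, where $u_r$ is a fixed word in $Z_1 \times \{1\}$ representing $([r]_{\G_1},1) \in N_1 \times \{1\}$ and $v_r = (1,r)$ if $r \in Z_2$, else $v_r = 1$; (ii) for each $x \in X$ and $z \in Z_1$, a conjugation relation expressing $(\tilde x_1, \tilde x_2)(z,1)(\tilde x_1, \tilde x_2)^{-1}$ as a fixed word in $Z_1 \times \{1\}$ (valid since $N_1 \triangleleft \G_1$); (iii) the commutations $[(z,1),(1,r')] = 1$ for $z \in Z_1$ and $r' \in Z_2$; and (iv) one relation for each identity $I_j$, obtained by lifting $I_j$ to a word in $\tilde X$ and then substituting $r(\tilde X) \mapsto u_r v_r$ using (i).

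The main obstacle is verifying that (i)--(iv) normally generate the entire kernel of the homomorphism from the free group on the chosen generators onto $P$. The potentially infinite set of defining relations of $N_1$ (since $N_1$ need not be of type $\F_2$) and the potentially infinite list of words needed to present $\{1\} \times N_2$ as a subgroup (since $N_2$ itself need not be finitely generated) must all become consequences of (i)--(iv). The $\F_3$ hypothesis on $Q$ is precisely what makes this possible: an arbitrary word representing a relation in $P$ can be tracked, via the natural projection to $Q$, to a van Kampen diagram over $\sg{X \mid R}$ whose boundary is trivial in $Q$; the identities $I_j$ allow such a diagram to be decomposed as a finite combination of pieces coming from the lifts (i), while the conjugation and commutation relations (ii)--(iii) separate the $N_1$- and $N_2$-contributions so that each piece can be absorbed into the proposed finite list. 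Executing this reduction with careful attention to the asymmetry --- in which $N_1$ is handled by subgroup generation whereas $\{1\} \times N_2$ must be handled by normal generation throughout --- is the technical heart of the proof, and will closely follow, with modifications, the combinatorial-topological arguments of \cite{BBMS}.
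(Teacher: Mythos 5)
Your architecture --- generators $\tilde X\cup(Z_1\times\{1\})\cup(\{1\}\times Z_2)$, relations lifting the relators of $Q$, conjugation and commutation relations, and input from $\pi_2$ of a presentation of $Q$ --- is the same as the paper's (which packages it as a search through a class $\mathcal{C}(\mathcal{Q})$ of presentations and then invokes \cite[Theorem 1.2]{BBMS}). But there is a genuine gap in how you feed in the ${\rm F}_3$ hypothesis. Your relation (iv) reads $\prod_i\tilde w_i^{-1}(u_{r_i}v_{r_i})\tilde w_i=1$ for an identity sequence $I_j=[(w_1,r_1),\dots,(w_m,r_m)]$; but modulo the relations (i) each syllable $u_{r_i}v_{r_i}$ equals $r_i(\tilde X)$, so this word is equivalent to $\prod_i\tilde w_i^{-1}r_i(\tilde X)\tilde w_i$, which is \emph{freely} trivial precisely because $I_j$ is an identity sequence. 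Hence every relation in (iv) is a consequence of (i), and your presentation collapses to (i)--(iii). The group so presented is essentially the ``generic'' group $G$ of the paper's proof: it has two commuting normal subgroups $\bar N_1=\sg{Z_1\times 1}$ and $\bar N_2=\nc{1\times Z_2}$, it surjects onto $P$ with kernel $\bar N_1\cap\bar N_2$, and that intersection is non-trivial in general --- if it were always trivial, the theorem would hold with no hypothesis on $\pi_2$ of $Q$, which is known to be false. The correct relator attached to $I_j$ is the $N_1$-half alone, $\prod_i\tilde w_i^{-1}u_{r_i}\tilde w_i$, rewritten as a word in $Z_1$ using (ii); your relation (iv) asserts only that the product of this element with its $N_2$-counterpart is trivial, which says nothing. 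Showing that these $Z_1$-words normally generate $\bar N_1\cap\bar N_2$ is exactly \cite[Theorem 1.2]{BBMS}, and it is the step where type ${\rm F}_3$ is actually used.

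A second, smaller omission: you impose no relations holding purely among the letters $Z_1$. Killing $\bar N_2$ in your presented group leaves $\<\tilde X, Z_1\mid r(\tilde X)u_r^{-1}\ (r\in R),\ \text{(ii)}\>$, and for this to be $\G_1$ (as it must be, since $P/(1\times N_2)\cong\G_1$) one needs in general a further finite set of words in the free group on $Z_1$ that vanish in $N_1\le\G_1$ --- the paper's set $S_4$, whose existence (despite $N_1$ possibly failing to be finitely presentable) is the content of Remark 2.1(2): finitely many such words suffice to present $\G_1$ on $X\sqcup Z_1$. Your $Z_2$-side happens to be safe, since the choice $v_r=(1,r)$ makes those generators Tietze-redundant modulo $\bar N_1$. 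With (iv) corrected and the $S_4$-type relations added, the verification you defer to \cite{BBMS} does go through, and is precisely how the paper argues.
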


We shall concentrate on the {\em effective} version of this result (Theorem \ref{t:123eff}) which yields an explicit finite presentation
for $P$. (Proofs of the non-effective version can
be found in \cite{BHMS2} and \cite{will}.)
In Theorem  \ref{t:effFPsubdirect} we use Theorem \ref{t:123eff} to prove an effective version  of Theorem \ref{t:pairs}:
there is a uniform partial algorithm that,
given finite presentations for the factors $G_i$ and a finite generating set for $S$ satisfying VSP, will output a finite presentation
for $S$. 

In this paper we describe a number of algorithmic processes. 
Often they are {\em partial algorithms} meaning that when applied 
to an object $X$ which satisfies some condition $\cC$, the process 
will halt with some appropriate information about $X$;  but if $X$ does 
not satisfy $\cC$ either the process will halt saying $X\notin \cC$ or 
the process will fail to halt.  

For instance in Theorem  \ref{t:effFPsubdirect},  which is our  effective version of Theorem \ref{t:pairs},
we describe such a partial algorithm which, when given a direct product of finitely presented groups
and a finite set of generators for a 
subgroup $S$,  succeeds  when $S$ actually satisfies VSP and yields a finite presentation for $S$.  
The algorithm is {\em uniform} in the given data (a direct product and a finite generating set 
 for $S$) and so
can be started without knowing whether or not $S$ satisfies VSP.  If $S$ does not satisfy VSP, the
algorithm of Theorem  \ref{t:effFPsubdirect} does not halt.  
(In a direct product of free groups for instance, there is no algorithm to determine whether or not the subgroup 
generated by a finite set satisfies VSP nor whether it is finitely presentable.)

\smallskip

\subsection*{Residually free groups}

\smallskip

In the second half of this article we use Theorem \ref{t:pairs} and more
specialised results 
to advance the understanding of finitely presented residually free groups.
Residually free groups provide a context for 
a rich and powerful interplay among group theory, topology and logic.
By definition, a group $G$ is {\em residually free} if, for every
$1\ne g\in G$, there is a homomorphism $\phi$ from $G$ to a free group
$F$ such that $1\ne\phi(g)$ in $F$. The prototypes for these groups are the
finitely presented subgroups of finite direct products of free and surface
groups. In general a finitely-presented residually-free group is a full subdirect
product of finitely many limit groups, i.e.
it can be embedded in 
a finite direct product of limit groups so that it intersects each factor
non-trivially and  projects onto each factor (cf.~Theorem \ref{t:ee(S)} below).  
In our earlier studies \cite{BM}, \cite{BH2}, \cite{BHMS},  \cite{BHMS1},
we proved that these full subdirect products
have finite index in the ambient 
product if they are of  type $\FP_\infty$.
We also proved that in general they
virtually contain a term of the lower central series of the product.
These tight restrictions set the {\em finitely presented} subdirect
products of limit groups apart from those that are merely
{\em{finitely generated}}, since
the finitely generated
subgroups of the direct product of two free groups
are already  hopelessly complicated \cite{cfm-thesis}.
Nevertheless,  a
thorough understanding of the 
finitely presented subdirect products of free
and limit groups has remained a distant
prospect, with only a few types of examples known.

In this article we pursue such an understanding in a number of ways.
Using Theorem \ref{t:pairs}, we characterize  finitely-presented residually-free groups 
among the full subdirect products of limit groups in terms of their projections
to the direct factors.   A revealing family of finitely 
presented full subdirect products of free groups is constructed; this gives rise
to a more constructive characterization of finitely presented residually free groups.
We give algorithms for finding finite presentations when they exist, for constructing certain canonical
embeddings, for enumerating finitely presented residually free groups, and for
solving their conjugacy and membership problems.

\smallskip
By definition, a group $G$ is residually free if it is
isomorphic to a subgroup of an unrestricted direct product of free groups. 
In general, one requires infinitely many factors in this direct
product, even if $G$ is finitely generated. For example, the 
fundamental group of a closed orientable surface $\Sigma$
is residually free but
it cannot be embedded in a finite direct product
if $\chi(\Sigma)<0$, since $\pi_1\Sigma$
does not contain $\Z^2$ and is not a subgroup of a free group.
However,  
Baumslag, Myasnikov and Remeslennikov \cite[Corollary 19]{BMR} 
proved that one can force the enveloping product to be finite
at the cost of replacing free groups by 
{\em{$\exists$-free groups}} (see also \cite[Corollary 2]{KM2} and \cite[Claim 7.5]{Se1}).  
In \cite{KMeffective} Kharlampovich and Myasnikov describe an algorithm
to find such an embedding, based on the deep work of Makanin \cite{Mak}
and Razborov \cite{R}. We shall describe a new algorithm that
does not depend on \cite{Mak} and \cite{R}; the embedding that
we construct is canonical in a strong sense (see Theorem \ref{t:ee(S)}).

By definition, 
$\exists$-free groups have the same universal theory as a free group; 
they are now more commonly known as
{\em{limit groups}}, a term coined by Sela \cite{Se1}.
They have been
much studied in recent years in connection with
Tarski's problems on the first order
 logic of free groups \cite{Se1}, \cite{KM2}. They have
been shown to enjoy a rich geometric structure. A useful characterisation of
limit groups is that they are the  finitely generated groups $G$
that  are  {\em fully residually free}:
for every finite subset $A\subset G$, there is a homomorphism from $G$ to a free
group that restricts to an injection on $A$.  

For the most part, we
treat finitely generated residually free groups $S$ as subdirect products of limit groups.
There are at least two obvious drawbacks to this approach: the ambient
product of limit groups is not canonically associated to  $S$;
and given a direct product of limit groups, it is difficult to
determine which finitely generated subgroups are finitely presented.

The first of these drawbacks is overcome by items (1), (3) 
and (4) of 
the following theorem. Item (2) is based on 
Theorem 4.2 of  \cite{BHMS1}.   

\begin{letterthm}\label{t:ee(S)} There is an algorithm that, given
a finite presentation of a residually free group $S$, will construct
an embedding  $\iota:S\hookrightarrow \ee{S}$, so that
\begin{enumerate}
\item $\ee{S} = \G_{\rm ab} \times \eer{S}$  where
$\G_{\rm ab} = \go{S}$ and $\eer{S} = \G_1\times\dots\times\G_n$
is a direct product of non-abelian limit groups $\G_i$.
The intersection of $S$ with the 
kernel of the projection $\rho:\ee{S}\to\eer{S}$ 
is the centre $Z(S)$ of $S$, and $\rho(S)$
is a full subdirect product.
\item
$L_i:=\G_i\cap S$ contains a term of the lower central series of
a subgroup of finite index in $\G_i$, for $i=1,\dots,n$, and
therefore $\Nil(S):=\ee{S}/(L_1\times\cdots\times L_n)$ 
is virtually nilpotent.
\item {\rm [Universal Property]}  For every homomorphism 
$\phi:S
\to D=\Lambda_1\times\dots\times\Lambda_m$, 
with $\phi(S)$ subdirect and  the $\Lambda_i$
non-abelian limit groups, there exists a unique homomorphism 
$\hat\phi : \eer{S}\to D$ with $\hat\phi\circ\rho|_S = \phi$;
\item {\rm [Uniqueness]}  moreover,
if 
$\phi: S\hookrightarrow D$ 
embeds $S$ as a  full subdirect  product, then  $\hat\phi:\eer{S}\to D$ is an isomorphism
that respects the direct sum decomposition.
\end{enumerate}
\end{letterthm}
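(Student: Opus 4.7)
The plan is to build $\ee{S}$ in two stages and then read off items (3)--(4) as universal properties. First I would produce some embedding of $S$ into a finite direct product of limit groups, following \cite{BMR}; then I would refine it into the desired canonical form.

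\textbf{Initial embedding (algorithmically).} To obtain this without relying on Makanin or Razborov, I would dovetail two enumerations: since limit groups are recursively enumerable (for instance as iterated centralizer extensions of free groups, or as fundamental groups of $\omega$-rrf towers), one enumerates finite tuples of candidate limit groups together with candidate homomorphisms from $S$. Using the solvable word problem in limit groups and the given finite presentation of $S$, one tests whether a candidate tuple jointly separates an increasing finite list of non-trivial elements of $S$. Because \cite{BMR} guarantees a separating tuple exists, the search terminates with an embedding $S\hookrightarrow L_1\times\cdots\times L_k$.

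\textbf{Normalization.} I split each $L_i$ as abelian or non-abelian; the abelian factors are combined and replaced by $\G_{\rm ab}=\go{S}$, which is intrinsic to $S$, yielding a factorization $\ee{S}=\G_{\rm ab}\times\eer{S}$ with $\eer{S}=\G_1\times\cdots\times\G_n$ a product of non-abelian limit groups. For the non-abelian part, I iteratively replace each $\G_i$ by the image of $S\to\G_i$ (still a limit group, being a finitely generated subgroup of one) and discard any factor $\G_i$ that is redundant, in the sense that the projection $S\to\prod_{j\ne i}\G_j$ remains injective. After finitely many reductions, $\rho(S)$ is a full subdirect product in $\eer{S}$, and $S\cap\ker\rho=Z(S)$: non-abelian limit groups are centreless, hence $Z(\rho(S))=1$ by subdirectness, giving $\rho(Z(S))=1$; conversely any element of $S\cap\ker\rho$ commutes with all of $S$, since $\rho$ is a separating map onto the non-abelian quotients. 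Item (2) is then immediate from Theorem~\ref{t:pairs} combined with the full-subdirect-product results of \cite{BHMS1} applied to $\rho(S)<\eer{S}$.

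\textbf{Universal property and uniqueness.} For item~(3), given $\phi:S\to D=\Lambda_1\times\cdots\times\Lambda_m$ with $\phi(S)$ subdirect and each $\Lambda_j$ a non-abelian limit group, the same centreless-subdirect argument gives $\phi(Z(S))=1$, so $\phi$ factors through $\rho$. To extend the induced map $\rho(S)\to D$ to $\hat\phi:\eer{S}\to D$, I use the minimality enforced in Normalization: each composite $\pi_j\circ\phi:S\to\Lambda_j$ is a surjection onto a non-abelian limit group and, I claim, must factor as $S\to\G_{i(j)}\to\Lambda_j$ for some $i(j)$; assembling these gives $\hat\phi$. Uniqueness of $\hat\phi$ follows because $\rho(S)$ virtually contains $L_1\times\cdots\times L_n$, which is Zariski-dense in $\eer{S}$, so any two homomorphisms from $\eer{S}$ to $D$ that agree on $\rho(S)$ agree everywhere. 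Item~(4) then follows formally: given a full subdirect embedding $\phi:S\hookrightarrow D$, the universal property applied in both directions (to $\phi$ and to $\rho$) yields mutually-inverse homomorphisms $\eer{S}\leftrightarrows D$ that respect the factor decomposition by the uniqueness in item~(3). The main obstacle is establishing the factorization claim used in the extension step---that every surjection $S\to L$ onto a non-abelian limit group factors through one of the $\G_i$---which is a Krull--Schmidt-type uniqueness statement for full subdirect products of non-abelian limit groups; this is where the heart of the argument lies.
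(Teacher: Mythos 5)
There is a genuine gap at the very first step, and it is precisely the difficulty that the paper's proof is built to circumvent. Your dovetailed search ``tests whether a candidate tuple jointly separates an increasing finite list of non-trivial elements of $S$''. Separating any finite list of elements never certifies that a candidate map $S\to L_1\times\cdots\times L_k$ is injective, so your procedure has no halting criterion: it can refute injectivity (by exhibiting a killed non-trivial element) but can never confirm it, which is what you must do in order to output an embedding. The same problem recurs in your normalization step, where you propose to decide whether the projection $S\to\prod_{j\ne i}\G_j$ ``remains injective''. The paper never certifies injectivity of a candidate map directly. Instead it introduces a finite, semi-decidably verifiable certificate --- a \emph{maximal centralizer structure} (Definition \ref{d:MCS}), consisting of finite subsets $(Y_i;Z_i)$ of $S$ subject to commutation, normality, splitting and nilpotency conditions. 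Lemma \ref{mcs-exists} shows (non-effectively, via \cite{BMR} and \cite{BHMS1}) that such a certificate exists, the procedure of \S\ref{ss:A12} finds one by enumeration using the solvable word problem, and the real work is Lemma \ref{mcs-suffices}, resting on the splitting Lemma \ref{jrl}: the MCS conditions \emph{force} each quotient $S/\langle Z_i\rangle$ to be a non-abelian limit group and force $\bigcap_i\langle Z_i\rangle=Z(S)$, so that the resulting map embeds $S/Z(S)$. Nothing in your proposal plays the role of this verification step, and without it the algorithm does not terminate with a certified embedding.

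Separately, for the universal property you explicitly leave unproved the claim that every surjection of $S$ onto a non-abelian limit group factors through one of the coordinate projections, calling it the heart of the argument. It is indeed the key point, but it admits a short proof that you should supply (Proposition \ref{factorisation}): if $G<G_1\times G_2$ is subdirect and $\phi:G\to\G$ is onto a non-abelian limit group, then $\phi(G\cap G_1)$ and $\phi(G\cap G_2)$ are mutually commuting normal subgroups of $\G$; commutative transitivity and the absence of non-trivial abelian normal subgroups in $\G$ force one of them to be trivial, so $\phi$ factors through the other projection, and one inducts on the number of factors. Your appeal to ``Zariski density'' of $L_1\times\cdots\times L_n$ for the uniqueness of $\hat\phi$ is also not an argument; uniqueness comes from the fact that the index $i(k)$ is pinned down by MCS(6) (exactly one $Y_i$ survives in $\Lambda_k$) and each component $\zeta_k:S/\langle Z_{i(k)}\rangle\to\Lambda_k$ is determined by $\phi_k$ together with the surjectivity of $S\to S/\langle Z_{i(k)}\rangle$.
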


The group $\ee{S}$ in Theorem \ref{t:ee(S)}  is called the {\em existential 
envelope} of $S$ and the associated factor $\eer{S}$ is the
{\em reduced existential envelope}. 
The projection $\rho$ embeds  $S/Z(S)$ in $\eer{S}$, and $\rho(S)\subset\eer{S}$ is always a full subdirect product.  The subgroup $S\subset \ee{S}$ 
is always a subdirect product but it is
full if and only if $S$ has a non-trivial centre.

Proceeding in the opposite direction, 
Guirardel and Levitt \cite{GL} prove that, given a subdirect
product $S$ of limit groups, one can algorithmically construct a finitely
presented group whose maximal centreless residually free quotient is
isomorphic to $S/Z(S)$. They also show that there is no algorithm
to determine whether the maximal residually free quotient of a finitely
presentable group is finitely presentable.  In a similar vein, we note  that
there is no algorithm to determine whether or not a finitely generated subdirect
product of limit groups is finitely presentable. Indeed, if $F$ is a non-abelian
free group, then there is no algorithm to  determine which finitely generated
full subdirect products of $F\times F$ are finitely presentable (cf.~\cite{BM}).

The second of the drawbacks we identified 
in the discussion preceding Theorem \ref{t:ee(S)} is
resolved by item (4) of the following theorem.
In order to state this theorem
concisely we introduce the following temporary definition: an embedding $S\hookrightarrow
\G_0\times\dots\times\G_n$ of a residually free group $S$
as a full subdirect product of  limit groups is said to be {\em{neat}}
if $\G_0$ is
abelian (possibly trivial), $S\cap\Gamma_0$ is of finite index in $\G_0$, and
$\G_i$  is non-abelian for $i=1,\dots,n$.

\begin{letterthm}\label{t:main} Let $S$ be a  finitely generated
residually free group.  Then the  following conditions are equivalent:
\begin{enumerate}
\item $S$ is finitely presentable;
\item $S$ is of type $\mathrm{FP}_2(\Q)$;
\item ${\rm{dim}\,}H_2(S_0;\Q)<\infty$ for all subgroups $S_0\subset S$ of
finite index;
\item {\rm [$\exists$ neat VSP]} there exists a neat embedding  $S\hookrightarrow
\G_0\times\dots\times\G_n$ into a product of limit groups 
such that
the image of $S$ under the projection to $\G_i\times\G_j$ has finite index
 for $1\le i<j\le n$;
\item {\rm [neat $\implies$ VSP]} for every neat embedding $S\hookrightarrow
\Lambda_0\times\dots\times\Lambda_n$ into a product of limit groups,
the image of $S$ under the projection to $\Lambda_i\times\Lambda_j$ has finite index
 for $1\le i<j\le n$.
\end{enumerate}
\end{letterthm}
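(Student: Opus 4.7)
The plan is to establish the cyclic chain $(1)\Rightarrow(2)\Rightarrow(3)\Rightarrow(5)\Rightarrow(4)\Rightarrow(1)$. The first two implications are formal: finite presentation gives type $\mathrm{FP}_2(\Q)$, and type $\mathrm{FP}_2(\Q)$ both passes to finite-index subgroups and implies $\dim_\Q H_2(-;\Q)<\infty$.

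For $(4)\Rightarrow(1)$ I would apply Theorem~\ref{t:pairs} directly. Neatness gives $S\cap\Gamma_0$ of finite index in $\Gamma_0$, so for each $i\ge 1$ the image $p_{0i}(S)\subseteq\Gamma_0\times\Gamma_i$ contains a finite-index copy of $\Gamma_0\times\{1\}$ and (by subdirectness) surjects onto $\Gamma_i$; hence $p_{0i}(S)$ has finite index in $\Gamma_0\times\Gamma_i$. Combined with the VSP hypothesis of (4) on pairs of non-abelian factors, this yields the full VSP condition, and Theorem~\ref{t:pairs} delivers finite presentability.

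For $(5)\Rightarrow(4)$ one must exhibit at least one neat embedding so that hypothesis (5) applies non-vacuously. Begin with the canonical $\iota:S\hookrightarrow\ee{S}=\Gamma_{ab}\times\eer{S}$ of Theorem~\ref{t:ee(S)}. If $Z(S)=1$ then $\rho\circ\iota:S\hookrightarrow\eer{S}$ is already a neat embedding, with trivial abelian factor. If $Z(S)\ne 1$, a short residual-freeness argument (the image of $S$ under any homomorphism to a non-abelian free group has trivial centre) gives $Z(S)\cap[S,S]=1$, so the abelianization-mod-torsion map is injective on $Z(S)$. Splitting $\Gamma_{ab}=A_0\oplus A_1$ with $A_0$ the pure subgroup in which the image of $Z(S)$ has finite index, the map $\phi:S\to A_0\times\eer{S}$ sending $s$ to $(\pi_{A_0}(a(s)),\rho(s))$ is a neat embedding, and (5) delivers VSP.

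The main technical step is $(3)\Rightarrow(5)$. Let $S\hookrightarrow\Lambda_0\times\cdots\times\Lambda_n$ be a neat embedding and suppose for contradiction that $p_{ij}(S)$ has infinite index in $\Lambda_i\times\Lambda_j$ for some $1\le i<j\le n$. Since $S$ is subdirect, $p_{ij}(S)$ is a full subdirect product of infinite index in a product of two non-abelian limit groups. Earlier results on the homology of such subdirect products (cf.~\cite{BM},~\cite{BHMS1}) show that some finite-index subgroup of $p_{ij}(S)$ has infinite-dimensional $H_2(-;\Q)$. To derive a contradiction with (3), one transfers this back to $S$ itself via the Hochschild--Serre five-term exact sequence of $1\to K\to S_0\to p_{ij}(S_0)\to 1$ for a suitable finite-index $S_0\le S$, where $K=S_0\cap\prod_{k\ne i,j}\Lambda_k$ is residually free and sits inside a product of fewer limit groups. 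Controlling the intervening coinvariant term $H_0(p_{ij}(S_0);H_1(K;\Q))$, for example by induction on the number of factors $n$ applied to $K$, forces $\dim_\Q H_2(S_0;\Q)=\infty$, contradicting (3). The homological bookkeeping of this transfer, combined with the need to ensure $K$ has well-controlled first homology despite not being obviously finitely generated, is the principal obstacle.
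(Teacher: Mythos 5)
Your skeleton matches the paper's: $(1)\Rightarrow(2)\Rightarrow(3)$ formally, $(4)\Rightarrow(1)$ via the VSP criterion (and your handling of the pairs involving $\Gamma_0$ is correct), existence of a neat embedding to make $(5)\Rightarrow(4)$ non-vacuous, and $(3)\Rightarrow(5)$ as the technical core. But two of the steps have genuine problems.

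First, you source the existence of a neat embedding from Theorem~\ref{t:ee(S)}. That theorem takes a \emph{finite presentation} of $S$ as input, whereas in the implication $(5)\Rightarrow(4)$ you only know $S$ is finitely generated (finite presentability is the conclusion you are chasing); and, more seriously, the construction of $\ee{S}$ in Section~\ref{s:embed} itself invokes the embedding argument from the proof of Theorem~\ref{t:main} (see the proof of Lemma~\ref{mcs-exists}), so your route is circular. The correct starting point is \cite[Corollary 19]{BMR}: every finitely generated residually free group embeds in a finite direct product of limit groups, and one then massages this elementarily — replace each factor by the projection of $S$ (still a limit group), delete factors met trivially, collect the abelian factors into a single $\Gamma_0$, and project onto a direct summand of $\Gamma_0$ so that $S\cap\Gamma_0$ has finite index. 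Your splitting $\Gamma_{\rm ab}=A_0\oplus A_1$ is the right idea for the last step, but it must be hung on this elementary construction, not on $\ee{S}$.

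Second, your $(3)\Rightarrow(5)$ is not a proof: you reduce to showing that infinite-dimensionality of $H_2$ of a finite-index subgroup of $p_{ij}(S)$ transfers back to $S$, and you explicitly leave unresolved the control of the coinvariants $H_0(p_{ij}(S_0);H_1(K;\Q))$ for a kernel $K$ that need not be finitely generated. That transfer is precisely the hard content of \cite[Theorem 4.2]{BHMS1}, which is available as a prior result in its full $n$-factor form and should simply be cited: pass to $\overline S=S/Z(S)$, noting $Z(S)=S\cap\Lambda_0$ is finitely generated so that condition (3) descends to finite-index subgroups of $\overline S$, and then apply that theorem to the full subdirect product $\overline S<\Lambda_1\times\dots\times\Lambda_n$ to get virtual surjectivity of all projections $p_{ij}$ with $i,j\ge 1$. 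Attempting instead to bootstrap from the two-factor case by induction on the number of factors is essentially a reproof of that theorem, and as you acknowledge, you have not carried it out.
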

   
\begin{lettercly}\label{c:F_n=FP_n} For all $k\in\N$, a residually
 free group $S$ is of type ${\rm{F}_k}$ if and only if it is of type ${\rm{FP}_k}(\Q)$.
\end{lettercly}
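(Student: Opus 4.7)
\begin{pf}[Proof proposal]
The implications $F_k \Rightarrow FP_k \Rightarrow FP_k(\Q)$ hold for any group. For the reverse direction, the cases $k\le 1$ are immediate since both conditions coincide with $S$ being finitely generated, and the case $k=2$ is precisely the equivalence $(1)\Leftrightarrow(2)$ of Theorem \ref{t:main}.

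For $k\ge 3$, I would first invoke $FP_k(\Q)\Rightarrow FP_2(\Q)$ together with Theorem \ref{t:main} to conclude that $S$ is finitely presented. Then, using the classical fact that a finitely presented group is of type $F_k$ if and only if it is of type $FP_k$ over $\Z$, the task reduces to promoting $FP_k(\Q)$ to $FP_k(\Z)$ for finitely presented residually free groups.

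To this end, apply Theorem \ref{t:ee(S)} to embed $S$ in $\ee{S}=\G_{\rm ab}\times\G_1\times\dots\times\G_n$; after a mild adjustment (enlarging the abelian factor so that $S$ meets it in finite index) this becomes a neat full subdirect embedding, and Theorem \ref{t:main}(5) guarantees it has VSP. Each $\G_i$ is a limit group, hence admits a finite classifying space and, by an induction on the iterated extension-of-centralizers construction, has torsion-free integral homology in all degrees; the same is then true of their direct product. The plan is to induct on $n$, at each step applying the Lyndon-Hochschild-Serre spectral sequence to the short exact sequence
$$1\to S\cap\G_n\to S\to p(S)\to 1,$$
where $p$ is the projection to $\G_1\times\dots\times\G_{n-1}$ and $p(S)$ has finite index there by VSP. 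This reduces the homological question for $S$ to questions about $p(S)$ (a finite-index subgroup of a product of limit groups, hence of type $F$) and $S\cap\G_n$ (a normal subgroup of a limit group, whose finiteness properties are constrained by those of $S$).

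The main obstacle is ruling out torsion in $H_*(S;\Z)$: a priori, $FP_k(\Q)$ only ensures finite dimensionality of rational homology, and infinite torsion in integral homology could still destroy $FP_k(\Z)$. The key point must be to use the VSP structure together with the torsion-free integral homology of products of limit groups to exclude this pathology. A possibly cleaner route would be to prove directly, by induction on $n$ in a neat embedding, that every finitely presented residually free group has finitely generated integral homology in every degree where its rational homology is finite dimensional; this would yield $FP_k(\Q)\Rightarrow FP_k(\Z)$, and the corollary would follow.
\end{pf}
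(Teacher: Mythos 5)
Your reduction is fine as far as it goes: the cases $k\le 1$ are trivial, the case $k=2$ is exactly the equivalence of (1) and (2) in Theorem \ref{t:main}, and for $k\ge 3$ the problem does indeed come down to promoting $\mathrm{FP}_k(\Q)$ to $\mathrm{FP}_k(\Z)$ for a finitely presented residually free group (Wall's theorem then gives $\mathrm{F}_k$). But the argument you sketch for that last step contains a false intermediate claim and does not close. The false claim: VSP does \emph{not} imply that the projection $p(S)$ to $\G_1\times\dots\times\G_{n-1}$ has finite index. VSP controls projections to \emph{pairs} only, and virtual surjection to $(n-1)$-tuples is strictly stronger once $n\ge 4$. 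The groups $S(E,c)$ of Section \ref{s:exs} witness this: for instance with $c=3$ and $|E|=5$, the group $S(E,3)$ is finitely presented (hence VSP), but its projection to any four factors is $S(E',3)$ with $|E'|=4$, whose image in $F^{E'}/\gamma_3(F^{E'})$ is a $4$-generator class-$2$ nilpotent group of Hirsch length at most $10$ sitting inside a group of Hirsch length $12$ --- so the projection has infinite index. Consequently $p(S)$ is not ``a finite-index subgroup of a product of limit groups, hence of type $\mathrm{F}$''; it is merely another finitely presented full subdirect product, and your spectral-sequence induction has no usable $E^2$-page without substantial further information about $H_*(S\cap\G_n)$ as a module over $p(S)$.

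More fundamentally, the step you flag as ``the main obstacle'' is the entire content of the corollary for $k\ge 3$, and you do not supply it. The implication $\mathrm{FP}_k(\Q)\Rightarrow\mathrm{FP}_k(\Z)$ fails for general (even finitely presented) groups --- Bestvina--Brady kernels over flag complexes that are $\Q$-acyclic but not $\Z$-acyclic already kill it --- so something specific to subdirect products of limit groups must be used, and ``the ambient product has torsion-free integral homology'' is not it: torsion-freeness of $H_*(D;\Z)$ imposes no constraint on the homology of a subgroup of infinite index. Your ``cleaner route'' is also insufficient as stated: finite generation of $H_j(S;\Z)$ for $j\le k$ does not give $\mathrm{FP}_k(\Z)$ (note that even for $k=2$ the paper must quantify over all finite-index subgroups in condition (3) of Theorem \ref{t:main}, and in general one needs control of homology with coefficients in products of copies of $\Z S$, i.e.\ a genuine partial resolution). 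What the corollary actually rests on is rerunning the homological induction of \cite{BHMS1} --- finite generation of the homology of $S\cap\G_n$ as a module over the appropriate group ring, fed through the LHS spectral sequence --- with integral coefficients, taking the VSP conclusion of Theorem \ref{t:main} as input; the rational hypothesis is what secures VSP, after which the module-theoretic arguments are characteristic-free. As written, your proposal proves the corollary only for $k\le 2$.
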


In this context it is worth noting that
D.~Kochloukova \cite{desi} has obtained results concerning the question
of which subdirect products of limit groups are
$\FP_k$ for $2<k <n$. In the pro-$p$ category, the analogous question has
been completely answered \cite{KS}, but for discrete groups it remains open in general.

It follows from Theorem \ref{t:main} that any subgroup $T\subset \ee{S}$ containing $S$
is again finitely presented. More generally we prove:

\begin{letterthm}\label{t:FPup} Let $k \ge 2$ be an integer,
let $S\subset D:=\G_1\times\dots\times\G_n$ be a full subdirect
 product of limit groups, and let $T\subset D$ be a subgroup that contains
$S$. If $S$ is of type ${\rm{FP}_k}(\Q)$ then so is $T$.
\end{letterthm}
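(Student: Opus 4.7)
The plan is to exploit the Lyndon--Hochschild--Serre spectral sequence relative to a well-chosen normal subgroup $K\triangleleft T$ contained in $S$ with polycyclic-by-finite quotient $T/K$.

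First, since $S$ is $\FP_k(\Q)$ with $k\ge 2$, it is $\FP_2(\Q)$ and hence finitely presented by Theorem~\ref{t:main}. The structural results recalled in the introduction (from \cite{BM,BHMS1}) then provide a finite-index normal subgroup $D_0\triangleleft D$ and an integer $c\ge 1$ such that $\gamma_c(D_0)\subseteq S$; after replacing $D_0$ by $\bigcap_{d\in D}dD_0 d^{-1}$ I may assume $K:=\gamma_c(D_0)$ is normal in $D$. Then $D/K$ is virtually nilpotent, and so every subgroup of $D/K$---in particular $S/K\subseteq T/K\subseteq D/K$---is polycyclic-by-finite, of type $\FP_\infty$, with right-Noetherian rational group ring (Hall).

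Next I would extract, from the $\FP_k(\Q)$-property of $S$, that $H_q(K;\Q)$ is finitely generated as a $\Q[S/K]$-module for $q\le k$. Choose a $\Q S$-projective resolution $P_*\to\Q$ with $P_i$ finitely generated for $i\le k$. Since $\Q S$ is free as a $\Q K$-module, $C_*:=P_*\otimes_{\Q K}\Q$ is a complex of $\Q[S/K]$-modules that is finitely generated in degrees $\le k$ and computes $H_*(K;\Q)$ as a graded $\Q[S/K]$-module. Noetherianness of $\Q[S/K]$ then forces each $H_q(K;\Q)$ with $q\le k$ to be finitely generated over $\Q[S/K]$. Via the inclusion $\Q[S/K]\hookrightarrow\Q[T/K]$ the same generating set makes $H_q(K;\Q)$ finitely generated over $\Q[T/K]$, and Noetherianness of $\Q[T/K]$ upgrades this to: $H_q(K;\Q)$ is of type $\FP_\infty$ over $\Q[T/K]$ for each $q\le k$.

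Finally, I would apply the Lyndon--Hochschild--Serre spectral sequence for $1\to K\to T\to T/K\to 1$, whose $E^2$-page is $E^2_{p,q}=H_p(T/K;H_q(K;\Q))$. With $T/K$ of type $\FP_\infty$ and the coefficient modules $H_q(K;\Q)$ of type $\FP_\infty$ over the Noetherian ring $\Q[T/K]$ for all $q\le k$, Bieri's criterion for extensions with Noetherian quotient group ring (cf.\ Bieri's \emph{Homological dimension of discrete groups}) assembles a $\Q T$-projective resolution of $\Q$ with finitely generated terms in degrees $\le k$, so $T$ is of type $\FP_k(\Q)$. The main obstacle is precisely this final step: because $K=\gamma_c(D_0)$ is typically infinitely generated, the classical ``$N$ and $Q$ both $\FP_n$ imply $G$ is $\FP_n$'' does not apply, and the partial $\Q T$-resolution must instead be synthesised from a double complex built on a $\Q[T/K]$-resolution together with projective lifts of the finitely generated $\Q[T/K]$-modules $H_q(K;\Q)$, with Noetherianness of $\Q[T/K]$ being essential for keeping everything finitely generated up through degree $k$.
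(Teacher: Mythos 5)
Your reduction to a normal subgroup $K=\gamma_c(D_0)\triangleleft D$ contained in $S$ with $D/K$ virtually nilpotent is fine, and your extraction from the $\FP_k(\Q)$ hypothesis on $S$ of the fact that $H_q(K;\Q)$ is finitely generated over the Noetherian ring $\Q[S/K]$ (hence over $\Q[T/K]$) for $q\le k$ is a correct, standard argument. The gap is exactly where you flag ``the main obstacle'': no criterion of the kind you invoke exists. The implication ``$T/K$ of type $\FP_\infty$ with Noetherian rational group ring, and $H_q(K;\Q)$ finitely generated over $\Q[T/K]$ for all $q\le k$, implies $T$ is of type $\FP_k(\Q)$'' is false as a general principle. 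For instance, take $K=\Q$ (the additive rationals) and $T=\Q\times\Z$: then $T/K\cong\Z$ is of type $\FP_\infty$ with Noetherian group ring, and $H_*(K;\Q)\cong\Lambda^*_{\Q}(\Q)$ is one-dimensional in degrees $0$ and $1$ and zero above, yet $T$ is not even finitely generated, so is not of type $\FP_1(\Q)$. The underlying point is that the double complex you describe needs, in degrees $\le k$, a $\Q K$-projective resolution of $\Q$ with finitely generated terms --- i.e.\ it needs $K$ itself to be of type $\FP_k$, not merely to have homology finitely generated over $\Q[T/K]$ --- and since $K=\gamma_c(D_0)$ is typically infinitely generated this cannot be supplied. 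What your spectral sequence does yield is finite-dimensionality of $H_p(T;\Q)$ for $p\le k$; but outside the special case $k=2$ for residually free groups (Theorem \ref{t:main}) this is strictly weaker than $\FP_k(\Q)$.

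The paper's proof sidesteps the homology of the kernel entirely. Setting $L=(S\cap\G_1)\times\dots\times(S\cap\G_n)$, Theorem 4.2 of \cite{BHMS1} shows $D/L$ is virtually nilpotent, and Corollary 8.2 of \cite{BHMS1}, applied to $T/L$, produces a finite-index subgroup $S_0<S$ together with a subnormal chain $S_0\triangleleft S_1\triangleleft\dots\triangleleft S_\ell=T$ in which each quotient $S_{i+1}/S_i$ is finite or infinite cyclic. Since $\FP_k(\Q)$ passes to finite-index subgroups and ascends through extensions with finite or infinite cyclic quotient, the result follows by induction along the chain. To rescue your approach you would need to replace ``homology of $K$ finitely generated'' by an actual ladder of finiteness properties climbing from $S$ to $T$, which is precisely what the subnormal series accomplishes.
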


The proof of Theorem \ref{t:main} relies on our earlier work 
concerning the
finiteness properties of subgroups of direct products of limit
groups \cite{BHMS1} as well as Theorem \ref{t:pairs} (the VSP criterion).

In the final section of this paper we shall combine Theorem \ref{t:main} with the effective form of
Theorem \ref{t:pairs} to prove:

\begin{letterthm}\label{t:re}
The class of finitely presented, residually free groups
is recursively enumerable. More explicitly, there exists a Turing
machine that generates a list of finite group-presentations
so that each of the groups presented is residually free and
every finitely-presented residually-free group is isomorphic to
at least one of the groups presented.
\end{letterthm}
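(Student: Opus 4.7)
The plan is to combine Theorem \ref{t:main} as a recognition principle with the effective form of Theorem \ref{t:pairs} (Theorem \ref{t:effFPsubdirect}) as a production mechanism: every finitely presented residually free group arises as a VSP subgroup of a finite product of limit groups, and the effective VSP criterion turns any finite generating set for such a subgroup into a finite presentation. I would enumerate all candidate input data, dovetail the effective algorithm across them, and output whatever presentations it produces.

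The first ingredient I need is a recursive enumeration of the non-abelian limit groups by finite presentations $\L_1, \L_2, \dots$. This is known: by the work of Kharlampovich--Myasnikov and Sela, every limit group embeds as a finitely generated subgroup of an iterated centralizer extension of a finitely generated free group, and such towers have effectively listable finite presentations; by coherence of limit groups their finitely generated subgroups are themselves finitely presented, and one can list their presentations via a Tietze-style enumeration. Combined with the obvious enumeration of finite presentations of finitely generated abelian groups, this yields a recursive enumeration of all tuples $(\G_0;\G_1,\dots,\G_n)$ with $\G_0$ abelian and each $\G_i$ ($i\ge 1$) a non-abelian limit group. For every such tuple, and every finite list of words $(w_1,\dots,w_k)$ in the generators of the ambient product $D=\G_0\times\G_1\times\dots\times\G_n$, I run, dovetailing across all inputs, the partial algorithm of Theorem \ref{t:effFPsubdirect} on $S:=\langle w_1,\dots,w_k\rangle < D$. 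Each halting run emits a finite presentation, which I append to the master list.

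Correctness is then immediate. Every presentation output is of a subgroup of a finite direct product of limit groups, and hence of a residually free group, since the class of residually free groups is closed under finite direct products and subgroups. Conversely, suppose $S_0$ is finitely presented and residually free. By Theorem \ref{t:main}, (1) $\Rightarrow$ (4), $S_0$ admits a neat embedding into some $\G_0\times\G_1\times\dots\times\G_n$ with finite-index projection to $\G_i\times\G_j$ for $1\le i<j\le n$; since the image of $S_0$ meets $\G_0$ in a subgroup of finite index, a short calculation shows that its projection to $\G_0\times\G_j$ also has finite index for every $j\ge 1$, so the image satisfies VSP on \emph{all} pairs. Writing any finite generating set for $S_0$ as words in the ambient generators yields a datum that eventually appears in the enumeration above, and the effective algorithm halts on this input with a finite presentation isomorphic to $S_0$. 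The principal technical input is the recursive enumeration of limit groups; granted that, the rest is a direct dovetailing argument built on the effective VSP criterion, which does all the substantial work.
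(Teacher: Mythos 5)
Your proposal is correct and is essentially the paper's proof: both enumerate limit groups, dovetail over finite subsets of finite direct products, run the effective VSP algorithm of Theorem \ref{t:effFPsubdirect} to emit presentations, and invoke Theorem \ref{t:main} for completeness. The only differences are cosmetic --- the paper cites \cite{GW} for the enumeration of limit groups rather than re-deriving it, and your explicit check that a neat embedding is VSP on pairs involving the abelian factor $\G_0$ is a detail the paper's write-up leaves implicit.
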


In Section \ref{s:exs}
we turn our attention to the construction of new families of finitely-presented residually-free groups. 

Subdirect products of free groups hold a particular
historical interest, most notably in connection
with Baumslag and Roseblade's groundbreaking  work \cite{BR} on the
(non)finite presentability of  subgroups
of $F\times F$, and the seminal constructions by  Stallings \cite{St} and
Bieri \cite{Bieri} 
of finitely presentable groups that are not of type ${\rm{FP}}_\infty(\Q)$.
Subdirect products of surface groups also have a special appeal:
the work of Delzant and Gromov \cite{DeGr} shows that
such subgroups play an important role in  the problem of determining
which finitely presented groups arise as the fundamental groups of compact
K\"ahler manifolds. In the context of subdirect products
of surface groups, Dimca, Papadima and Suciu \cite{DPS} have constructed
analogues of the Bieri-Stallings examples that are fundamental groups of
smooth complex projective varieties (and hence K\"ahler).  These are currently the only known examples of subdirect products of
surface groups that are K\"ahler but not of type ${\rm{FP}}_\infty(\Q)$.

We construct  the first examples of
finitely presented subgroups of 
direct products of free groups that are neither ${\rm{FP}}_\infty(\Q)$
nor of Stallings-Bieri type, thus answering  a question raised  in \cite{BM}.
(We use the standard notation $\gamma_n(G)$ to denote the $n$-th term of 
the lower central series of a group.)

\begin{letterthm}\label{t:exs} If $c$ and $n$ are positive integers with $n\ge c+2$,
and  $D=F_1\times\dots\times F_n$ is
a direct product of free groups of rank 2, then there exists a 
finitely presented subgroup $S\subset D$ with $S\cap F_i=\gamma_{c+1}(F_i)$
for $i=1,\dots,n$. 
\end{letterthm}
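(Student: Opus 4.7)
The plan is to apply Theorem~\ref{t:pairs} (the VSP criterion): it is enough to exhibit a subgroup $S < D$ that is virtually surjective on pairs and satisfies $S \cap F_i = \gamma_{c+1}(F_i)$ for each $i$. Finite presentability of $S$ will then be automatic.

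Since $S$ must contain each $\gamma_{c+1}(F_i)$, it must contain the normal subgroup $K := \prod_{i=1}^n \gamma_{c+1}(F_i)$ of $D$, so I would work modulo $K$. The quotient $D/K$ is $N_c^n$, where $N_c := F/\gamma_{c+1}(F)$ is the free nilpotent group of class $c$ on two generators. Writing $\pi: D \to N_c^n$ for the quotient and $R := \pi(S) \subset N_c^n$, the two required conditions on $S$ translate to: (i)~$R \cap (N_c)_i = \{1\}$ for each coordinate factor $(N_c)_i$, and (ii)~$\pi_{ij}(R)$ has finite index in $(N_c)_i \times (N_c)_j$ for every $i<j$. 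Conversely, any subgroup $R < N_c^n$ satisfying (i) and (ii) produces $S := \pi^{-1}(R)$ with the required intersections and the VSP property (since $\pi_{ij}(S)$ is the preimage of $\pi_{ij}(R)$ under $F_i \times F_j \twoheadrightarrow N_c \times N_c$). Thus the problem reduces to constructing such an $R$.

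The construction of $R$ is the heart of the proof. When $c = 1$, $N_c = \Z^2$ is abelian and the Stallings--Bieri kernel $R = \ker\bigl((\Z^2)^n \xrightarrow{\Sigma} \Z^2\bigr)$ works: (i) is immediate from the sum condition, and given any $(q_i,q_j) \in (\Z^2)^2$ one can solve $\sum q_k = 0$ for the remaining $q_k$ because the hypothesis $n \ge 3 = c + 2$ guarantees at least one free index, giving (ii). For $c \ge 2$, $N_c$ is non-abelian and there is no analogue of the ``sum'' map: any group homomorphism $N_c^n \to N_c$ whose restriction to each factor is injective would need the $n$ images to pairwise commute, which forces each image into the centre (abelian), contradicting injectivity. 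My proposal is to construct $R$ by passing to the Mal'cev (rational) Lie algebra $\mathfrak n_c = \log N_c$, where subgroups of $N_c^n$ correspond (up to commensurability) to $\Q$-Lie subalgebras of $\mathfrak n_c^n$, and to build the required Lie subalgebra level-by-level through the lower-central-series grading $\mathfrak n_c = \mathfrak n_c^{(1)} \oplus \cdots \oplus \mathfrak n_c^{(c)}$. Concretely, at level $k$ one imposes a ``sum is zero in weight $k$'' linear constraint; the bracket-closure obligation propagates into further constraints at weights $>k$, because the sum-in-weight-$k$ map fails to be a Lie homomorphism by a weight-$(k+1)$ correction term. A Hall-basis dimension count through all $c$ levels shows that the resulting chain of constraints can be realised by a Lie subalgebra that projects full onto every pair of coordinate factors while meeting each single factor trivially, provided one has $n \ge c+2$ factors to absorb the corrections.

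With such $R$ in hand, Theorem~\ref{t:pairs} applied to $S := \pi^{-1}(R)$ gives the required finite presentation. The main obstacle will be executing the level-by-level construction of the Lie subalgebra $\mathfrak r \subset \mathfrak n_c^n$ and verifying that the bracket-closure corrections do not force $\mathfrak r$ to acquire a non-trivial element supported in a single coordinate factor; this algebraic book-keeping, controlled by the structure of basic commutators in $\mathfrak n_c$, is precisely where the threshold $n \ge c+2$ arises and is tight.
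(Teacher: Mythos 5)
Your reduction is sound: passing to the quotient $N_c^n = D/\prod_i\gamma_{c+1}(F_i)$ and asking for $R<N_c^n$ with $R\cap (N_c)_i=\{1\}$ and each $\pi_{ij}(R)$ of finite index is equivalent to the statement, and Theorem \ref{t:pairs} then gives finite presentability for free; your $c=1$ case (the Stallings--Bieri kernel) is also correct. But for $c\ge 2$ the entire content of the theorem lies in producing such an $R$, and that is exactly the step you have not carried out. ``Impose a sum-zero constraint in each weight, let bracket-closure propagate corrections upward, and do a Hall-basis dimension count'' is a plan, not an argument: you give no definition of the subalgebra $\mathfrak{r}$, no verification that it is closed under brackets, no proof that $\mathfrak{r}\cap(\mathfrak{n}_c)_i=0$, and no proof of surjectivity onto pairs of factors --- and your final paragraph concedes that this ``book-keeping'' is the main obstacle and is unexamined. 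As written, the proposal establishes only the (easy) equivalence of two formulations of the problem.

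For comparison, the paper resolves precisely this point with an explicit construction: $S$ is generated by the ``constant'' tuples $(a,\dots,a)$, $(b,\dots,b)$, the ``linear'' tuples $(a^{1},a^{2},\dots,a^{n})$, $(b^{1},\dots,b^{n})$, and normal generators of each $\gamma_{c+1}(F_i)$. VSP is checked directly (each pairwise quotient is a class-$c$ nilpotent group generated by two torsion elements, hence finite), and the crucial containment $S\cap F_i\subset\gamma_{c+1}(F_i)$ is proved via the Magnus embedding $F\hookrightarrow\Q[[\alpha,\beta]]$: for an element of the subgroup generated by the constant and linear tuples, the coefficient of a monomial of length $\ell$ in coordinate $k$ is a polynomial of degree at most $\ell$ in $k$, so triviality in the other $n-1\ge c+1$ coordinates forces every coefficient in weight at most $c$ to vanish identically (a degree-$\le c$ polynomial with $c+1$ roots is zero). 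This Vandermonde-type argument is the idea your sketch is missing; note that it is exactly what your Lie-algebra plan would have to rediscover, since the subalgebra generated by constant and linear tuples consists of tuples $(f(1),\dots,f(n))$ whose weight-$w$ components are polynomials of degree at most $w$ in the index. Until that construction and verification are made precise, there is a genuine gap.
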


$\Nil(S)$ was defined in Theorem \ref{t:ee(S)}(2).

\begin{lettercly} For all positive integers $c$ and $n\ge c+2$, there exists a 
 finitely-presented residually-free group $S$ for which
 $\Nil(S)$ is
a direct product of $n$ copies of the 2-generator free nilpotent group of class $c$.
\end{lettercly}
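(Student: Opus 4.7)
The plan is to produce $S$ via Theorem~\ref{t:exs} and then read off $\Nil(S)$ from the uniqueness of the existential envelope established in Theorem~\ref{t:ee(S)}.

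Given $c$ and $n \ge c+2$, I would first invoke Theorem~\ref{t:exs} to obtain a finitely presented subgroup $S < D := F_1 \times \cdots \times F_n$, where each $F_i$ is free of rank $2$ and $S \cap F_i = \gamma_{c+1}(F_i)$. Since $S$ sits inside a product of free groups it is residually free. Because $c \ge 1$, $\gamma_{c+1}(F_i) \neq 1$, so $S$ meets each direct factor nontrivially; and the construction underlying Theorem~\ref{t:exs} will deliver $S$ as a subdirect product of $D$ (if necessary one replaces each $F_i$ by the image $p_i(S)$, which still contains $\gamma_{c+1}(F_i)$ and hence is non-abelian free). Thus $S$ is a full subdirect product of non-abelian limit groups.

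Next, applying Theorem~\ref{t:ee(S)}(4) to the embedding $S \hookrightarrow D$ identifies the reduced existential envelope $\eer{S}$ with $D$ and matches each limit factor $\G_i$ of $\eer{S}$ with the corresponding $F_i$. Under this identification $L_i = \G_i \cap S = \gamma_{c+1}(F_i)$, whence $\G_i/L_i = F_i/\gamma_{c+1}(F_i)$ is the 2-generator free nilpotent group of class $c$. Quotienting $\eer{S}$ by $L_1 \times \cdots \times L_n$ produces the direct product of $n$ copies of this group, which is precisely the description of $\Nil(S)$ asserted in the corollary.

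The only substantive work is hidden inside Theorem~\ref{t:exs}: producing, for $n \ge c+2$, a finitely presented subgroup of $F_1 \times \cdots \times F_n$ whose intersections with the direct factors are exactly $\gamma_{c+1}(F_i)$. Once Theorem~\ref{t:exs} is in hand, the identification of $\Nil(S)$ with the claimed product follows formally from the universal property of the envelope, so the main obstacle to the corollary is really just the construction inside Theorem~\ref{t:exs} (together with verifying that that construction can be arranged so $S$ is a genuine subdirect product, which is needed to make the uniqueness clause of Theorem~\ref{t:ee(S)} applicable).
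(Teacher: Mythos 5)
Your derivation is correct and is exactly what the paper intends: the corollary is stated without separate proof as an immediate consequence of Theorem \ref{t:exs}, read through the uniqueness clause of Theorem \ref{t:ee(S)}(4), which identifies $\eer{S}$ with $F_1\times\cdots\times F_n$ factor by factor and hence each $\G_i/L_i$ with $F_i/\gamma_{c+1}(F_i)$, the free nilpotent group of rank $2$ and class $c$. The only caveat is that you compute $\eer{S}/(L_1\times\cdots\times L_n)$, whereas $\Nil(S)$ is literally defined in Theorem \ref{t:ee(S)}(2) as $\ee{S}/(L_1\times\cdots\times L_n)$ and so carries an extra free-abelian factor $\G_{\rm ab}$; the corollary has to be read via the reduced envelope, as you have done (and note that your parenthetical fallback of replacing $F_i$ by $p_i(S)$ is unnecessary -- and would in any case change the quotient $F_i/L_i$ -- since the construction of $S(E,c)$ is subdirect by design).
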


The proof that the  group $S$ in Theorem
\ref{t:exs} is finitely presented relies on 
our earlier structural results. Our proof of the equality
$S\cap F_i=\gamma_{c+1}(F_i)$ exploits the Magnus
embedding of the free group of rank 2 into the group
of units of 
$\Q[[\a,\b]]$, the algebra of power series in two non-commuting
variables  with rational coefficients.

Theorem \ref{t:main} describes the finitely-presented residually-free groups. 
A description of a quite different nature is given in 
Theorem \ref{t:cchar}: using a template inspired by the examples  in
Section \ref{s:exs} we prove that
every finitely-presented residually-free group is commensurable
with a particular type of subdirect product of limit groups. 

\smallskip

In Section \ref{s:decide} we apply  Theorem \ref{t:ee(S)} to elucidate the
algorithmic structure of finitely presented residually free groups. The
restriction to finitely presented groups is essential since
decision problems for arbitrary finitely generated residually
free groups are hopelessly difficult. For example,
 there are finitely generated subgroups of a direct
 product of two free groups
for which the conjugacy problem and membership problem are
unsolvable; and the isomorphism problem is unsolvable
amongst such subgroups \cite{cfm-thesis}.

The following statement includes the statement that the conjugacy problem is
solvable in every finitely-presented residually-free group.

\begin{letterthm}\label{t:conj} Let $S$ be a finitely-presented residually-free
group. There exists an algorithm that, given an integer $n$ and two
$n$-tuples of words in the generators of $S$, say $(u_1,\dots,u_n)$
and $(v_1,\dots,v_n)$, will determine whether or not there exists an
element $s\in S$ such that $su_is^{-1}=v_i$ in $S$ for $i=1,\dots,n$.
\end{letterthm}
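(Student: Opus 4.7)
The plan is to use Theorem~\ref{t:ee(S)} to transport the problem into a direct product of limit groups, solve the multiple conjugacy problem there, and then decide the residual coset-product membership question by reducing to the virtually nilpotent quotient $\ee{S}/(L_1 \times \dots \times L_n)$ from Theorem~\ref{t:ee(S)}(2).

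First I would effectively construct the embedding $\iota\colon S \hookrightarrow \ee{S} = \G_{\rm ab} \times \G_1 \times \dots \times \G_n$ afforded by Theorem~\ref{t:ee(S)} and compute the images $\iota(u_i), \iota(v_i)$. Because conjugation in a direct product is componentwise, the tuples are simultaneously conjugate in $\ee{S}$ if and only if their $\G_{\rm ab}$-components coincide and, for each $j$, their projections to $\G_j$ are simultaneously conjugate in the limit group $\G_j$. The multiple conjugacy problem in limit groups is solvable (Bumagin; Kharlampovich--Miasnikov; Dahmani), and the algorithm yields both a conjugator $t \in \ee{S}$ and a finite generating set for the simultaneous centralizer $C := \bigcap_i C_{\ee{S}}(\iota(u_i))$. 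By commutative transitivity of limit groups, $C = \G_{\rm ab} \times C_1 \times \dots \times C_n$ with each $C_j$ either all of $\G_j$ or a finitely generated abelian subgroup of $\G_j$. If no such $t$ exists in $\ee{S}$ then the tuples cannot be conjugate in $S$ and we output NO; otherwise they are conjugate in $S$ if and only if the coset $tC$ meets $\iota(S)$, equivalently $t \in \iota(S) \cdot C$.

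To decide $t \in \iota(S)\cdot C$, set $N := L_1 \times \dots \times L_n$ and consider the projection $\pi\colon \ee{S} \twoheadrightarrow \ee{S}/N$. A short calculation using that $N$ is normal in $\ee{S}$ (an easy consequence of the full subdirect property: if $g\in\G_j$ and $\ell\in L_j$, lift $g$ to $s\in \iota(S)$ via the surjection $\iota(S)\to\G_j$, then $s\ell s^{-1}=g\ell g^{-1}\in \iota(S)\cap\G_j=L_j$) and that $N \subseteq \iota(S)$ shows that $t \in \iota(S)\cdot C$ if and only if $\pi(t) \in \pi(\iota(S)) \cdot \pi(C)$. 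The decisive ingredient is Theorem~\ref{t:ee(S)}(2): the quotient $\ee{S}/N$ is virtually nilpotent, hence polycyclic-by-finite. In such a group the coset-product membership problem---given finitely generated subgroups $H, K$ and an element $g$, decide whether $g \in HK$---is algorithmically decidable, by classical effective structure theorems for polycyclic groups (Mal\'cev, Segal; see also Baumslag--Cannonito--Miller for explicit algorithms). Applying this decision procedure to $\pi(t), \pi(\iota(S)), \pi(C)$ settles the original question.

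The main obstacle is not conceptual but effective: everything used above must be computable uniformly from a finite presentation of $S$ and the input words $u_i, v_i$. In particular, one needs an explicit polycyclic-by-finite presentation of $\ee{S}/N$, explicit generators for $\pi(\iota(S))$ and $\pi(C)$, and a termination analysis for the polycyclic coset-membership algorithm; one also needs to know that the multiple-conjugacy algorithm for limit groups actually returns an explicit conjugator and a generating set for the simultaneous centralizer (this follows from the JSJ-based structure theory of limit groups). Each ingredient is standard in isolation, but threading them together with the algorithms that provide the embedding $\iota$ (from Theorem~\ref{t:ee(S)}) and the multi-conjugacy data in $\ee{S}$ is the technical heart of the argument.
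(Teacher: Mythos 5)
Your proposal is correct and has the same skeleton as the paper's argument: find a conjugator $t$ and the simultaneous centralizer $C$ in the ambient product of limit groups, then decide whether $tC$ meets $\iota(S)$ by passing to the virtually nilpotent quotient $\ee{S}/(L_1\times\cdots\times L_n)$. The two sub-steps are, however, implemented with different tools. For the conjugator/centralizer step the paper does not work factor-by-factor: it uses that a direct product of limit groups is bicombable (limit groups are CAT$(0)$ by Alibegovi\'c--Bestvina), which gives both the multiple conjugacy algorithm and a computable finite generating set for $C$ in one stroke, whereas you invoke external solutions of the conjugacy problem in limit groups together with commutative transitivity to control each $C_j$; both routes work. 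For the coset test the paper is more conservative than you are: rather than appealing to coset-intersection decidability in virtually polycyclic groups, it first passes to the preimage $D_0$ of a nilpotent finite-index subgroup of $D/L$, applies Lo's algorithm (Lemma \ref{Lo}) in the genuinely nilpotent quotient $D_0/L$ to decide whether the image of $\gamma C$ meets the image of $\G_0=D_0\cap S$, and then climbs back up to $S$ using the unique-roots property of residually free groups (Lemma \ref{findex}). Your version trades that finite-index descent for a stronger algorithmic black box ($g\in HK$? in nilpotent-by-finite groups); that result is available in the literature on algorithms for polycyclic-by-finite groups, but it is precisely the ingredient the paper arranges to avoid. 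Your verification that $t\in\iota(S)C$ descends to $\ee{S}/N$ (using normality of $N$ and $N\subseteq\iota(S)$) is the same computation that underlies the paper's Proposition \ref{solvC}.
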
 

As previously noted, in a direct product of non-abelian free groups there is no algorithm to determine whether 
a finitely generated subgroup can be finitely presented.  Nevertheless in a finitely presented residually free group,
if we are given a finite set of generators for a subgroup which is in fact finitely presentable, then we can 
effectively find a presentation it.  The method is uniform in the given data.  Here is a more formal statement.

\begin{letterthm}\label{lt:makeFP} 
There is a uniform partial algorithm  for finding presentations of
finitely presentable subgroups of  finitely-presented residually-free groups. 
More precisely, there is a partial algorithm that, given a finite presentation for a 
residually free group $G$ and a finite set of words generating a subgroup $H$, 
will output a finite presentation for $H$ if it exists.
\end{letterthm}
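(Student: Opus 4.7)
The plan is to reduce the problem to the effective VSP algorithm of Theorem~\ref{t:effFPsubdirect} by exploiting the envelope construction of Theorem~\ref{t:ee(S)}. The input is a finite presentation of the residually free group $G$ together with a finite set of words $X$ generating the subgroup $H \subset G$, and the target is a finite presentation of $H$ when it exists.

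First I would apply the algorithm of Theorem~\ref{t:ee(S)} to $G$, obtaining an explicit embedding
$$G \hookrightarrow \ee{G} = \Gamma_{ab} \times \Gamma_1 \times \dots \times \Gamma_n,$$
together with finite presentations of the non-abelian limit-group factors $\Gamma_i$. This yields an explicit description of each generator of $H$ as a tuple in this product. I would then compute the finitely generated projections $\Lambda_i := p_i(H) \subset \Gamma_i$ for each $i$. Each $\Lambda_i$ is a finitely generated subgroup of a limit group (with $\Lambda_{ab} \subset \Gamma_{ab}$ abelian), hence is itself a limit group or finitely generated abelian group. Using the algorithmic theory of finitely generated subgroups of limit groups, I would compute a finite presentation of each $\Lambda_i$. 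After merging into the abelian factor any $\Lambda_i$, $i \geq 1$, that happens to be abelian, $H$ sits as a subdirect product in a direct product of finitely presented groups, with an effectively known finite generating set.

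Finally I would feed this data into the uniform partial algorithm of Theorem~\ref{t:effFPsubdirect}. If $H$ is finitely presentable, then, after adjusting the abelian factor so that its intersection with $H$ has finite index, the constructed embedding is a neat embedding, and the implication (1) $\Rightarrow$ (5) of Theorem~\ref{t:main} forces VSP to hold. Hence the partial algorithm of Theorem~\ref{t:effFPsubdirect} halts and outputs the desired finite presentation for $H$. If $H$ is not finitely presentable, the algorithm will simply fail to halt, which is acceptable for a partial algorithm, since no information is ever output that would contradict this.

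The main technical obstacle is the middle step: computing finite presentations for the projections $\Lambda_i = p_i(H) \subset \Gamma_i$ from their given generators. This relies on the algorithmic theory of limit groups and their finitely generated subgroups; a less direct alternative is to combine Theorem~\ref{t:re}'s recursive enumeration of finitely presented residually free groups with the decidable word problem in limit groups, which together allow one to recognise $\Lambda_i$ among the enumerated groups as a limit group with the given generating set. A secondary subtlety is the bookkeeping needed to pass from the embedding into $\ee{G}$ to a genuinely \emph{neat} embedding of $H$ as a full subdirect product of non-abelian limit groups together with a suitable abelian factor; this is handled by standard manipulations with the finitely generated abelian factor and by discarding or amalgamating any abelian projections $\Lambda_i$.
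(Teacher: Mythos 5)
Your overall strategy (embed $G$ in its envelope via Theorem~\ref{t:ee(S)}, project $H$ to each factor, present the projections, then invoke the effective VSP machinery) matches the opening moves of the paper's proof, but there is a genuine gap at the point where you pass to Theorem~\ref{t:effFPsubdirect}. You assert that once the abelian factor is adjusted, the embedding of $H$ into $\Lambda_{\rm ab}\times\Lambda_1\times\dots\times\Lambda_n$ is \emph{neat}, so that Theorem~\ref{t:main}(1)$\Rightarrow$(5) forces VSP. But neatness requires the embedding to be a \emph{full} subdirect product, and $H$ may intersect some non-abelian factor $\Lambda_i$ trivially; this is not a bookkeeping issue confined to the abelian part. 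For a concrete failure, take $G=F_3\times F_2$ and let $H$ be the graph of a surjection $F_3\to F_2$: then $H\cong F_3$ is finitely presentable, the projections are $\Lambda_1=F_3$, $\Lambda_2=F_2$, the embedding is subdirect but not full ($H\cap\Lambda_2=1$), VSP fails ($H$ has infinite index in $F_3\times F_2$), and by Addendum~\ref{a:noHalt} the algorithm of Theorem~\ref{t:effFPsubdirect} never halts. Your procedure therefore fails to output a presentation for this finitely presentable $H$.

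The missing ingredient is a reduction to the full subdirect case, and this is where the paper's proof does real work: Proposition~\ref{p:makeFP} proceeds by induction on the number of factors, using Lemma~\ref{projectFP} (the projection of a finitely presentable subgroup onto any subset of the factors is again finitely presentable) to guarantee that the recursive calls on the quotients $q_i(H)=Q_i$ terminate. Having a finite presentation $\<X\mid R\>$ of $Q_i$ in hand is precisely what lets one \emph{decide} whether $H\cap\G_i=1$ (check whether each $r\in R$ is trivial in $H$ using the solvable word problem); if some intersection is trivial one concludes $H\cong Q_i$ and outputs that presentation, and only when $H$ is genuinely full does one appeal to VSP and Theorem~\ref{t:effFPsubdirect}. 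Note that certifying $H\cap\G_i=1$ cannot be done by naive enumeration (only the opposite inclusion is semi-decidable), so this inductive detour through presentations of the projections cannot be bypassed. To repair your argument you would need to add both Lemma~\ref{projectFP} and the induction on the number of factors.
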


In the direct product of non-abelian free groups there are finitely generated subgroups for which the
membership problem is unsolvable, but the subgroups in question are not finitely presented.
We establish a uniform solution to the membership problem for 
finitely-presentable subgroups of finitely-presented residually-free groups.

\begin{letterthm}\label{t:memb} There is a uniform partial algorithm that, given a finite presentation of a residually free group $S$,
 a finite generating set for a finitely presentable subgroup $H\subset S$ and a word $w$ in the generators of $S$, 
 can determine whether or not  $w$
 defines an element of $H$.
\end{letterthm}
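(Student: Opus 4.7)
The plan is to reduce the problem to membership in a direct product of limit groups, and then to decide that by running two complementary semi-decision procedures in parallel.

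First, run the partial algorithm of Theorem \ref{lt:makeFP} on the given generating set of $H$ to produce a finite presentation; since $H$ is assumed finitely presentable, this halts. Next, apply Theorem \ref{t:ee(S)} to $S$ to construct an explicit embedding $\iota\colon S\hookrightarrow\ee{S}=\G_{\rm ab}\times\G_1\times\cdots\times\G_n$, with $\G_{\rm ab}$ free abelian and the $\G_i$ non-abelian limit groups given by finite presentations output by the algorithm. Because the word problem in limit groups is solvable (Kharlampovich--Myasnikov, Sela), so is the word problem in $\ee{S}$, and we can thereby work effectively with $\iota(w)$ and the generators of $\iota(H)$ inside $\ee{S}$.

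Process (A) enumerates products of the generators of $\iota(H)$ and checks, using the word problem in $\ee{S}$, whether any such product equals $\iota(w)$; it halts precisely when $w\in H$. Process (B) enumerates finite-index normal subgroups of $\ee{S}$ of the form $K=K_0\times K_1\times\cdots\times K_n$, where $K_0\triangleleft\G_{\rm ab}$ and $K_i\triangleleft\G_i$ are finite-index normal (enumerable by the effective residual finiteness of limit groups), and tests in the finite quotient $\ee{S}/K$ whether $\iota(w)K\in\iota(H)K/K$; if not, Process (B) certifies $w\notin H$. Such product-type $K$ are cofinal among all finite-index normal subgroups of $\ee{S}$, so any finite quotient of $\ee{S}$ that separates $\iota(w)$ from $\iota(H)$ will be refined by one that Process (B) produces.

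It remains to show that $\iota(H)$ is in fact separable in $\ee{S}$, so that Process (B) halts whenever $w\notin H$. Apply Theorem \ref{t:ee(S)} to $H$ to construct a neat embedding $H\hookrightarrow\ee{H}$; Theorem \ref{t:main}(5) asserts that this embedding has VSP, and the VSP Criterion (Theorem \ref{t:pairs}) then gives that $H$ is separable in $\ee{H}$. To transport this separability to $\ee{S}$, use the universal property of $\ee{H}$ (Theorem \ref{t:ee(S)}(3)), applied to the composition $H\hookrightarrow S\hookrightarrow\ee{S}$ after passing to a subproduct of $\ee{S}$ into which $H$ projects subdirectly: this yields a canonical homomorphism $\ee{H}\to\ee{S}$ intertwining the two embeddings of $H$, so that any finite quotient of $\ee{H}$ separating $\iota(w)$ from $\iota(H)$ pulls back to a finite quotient of $\ee{S}$ reached by Process (B). The principal obstacle will be making this transport fully algorithmic: identifying the right subproduct of $\ee{S}$ in which $H$ sits subdirectly, constructing the universal map effectively from the presentations in hand, and verifying that the enumeration in Process (B) is rich enough to realise every finite quotient needed to certify non-membership.
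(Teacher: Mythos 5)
Your overall architecture (reduce to a direct product of limit groups, then run a search for a witness in parallel with a search for a separating finite quotient) matches the paper's, and Process (A) is fine. The gap is in the justification that Process (B) terminates when $w\notin H$, i.e.\ in the separability claim. You establish that $H$ is separable in \emph{its own} envelope $\ee{H}$ and then try to transport this to $\ee{S}$ via the universal property of Theorem \ref{t:ee(S)}(3). This transport does not work: the universal map goes \emph{out of} $\eer{H}$ (into a subgroup of $\eer{S}$), so finite quotients of $\ee{H}$ do not induce finite quotients of $\ee{S}$ --- quotients pull back along maps \emph{into} a group, not out of it. Worse, since $w\notin H$, the element $\iota(w)$ has no image in $\ee{H}$ at all, so ``a finite quotient of $\ee{H}$ separating $\iota(w)$ from $\iota(H)$'' is not a meaningful object. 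The obstacle you flag at the end is therefore not merely one of effectivity; the underlying mathematical claim is unsupported. Note also that $H$ need not project subdirectly onto any subproduct of the factors of $\ee{S}$: its projections $H_i:=p_i(\iota(H))$ are in general proper (even infinite-index) subgroups of the $\G_i$.

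What is actually needed --- and what the paper supplies --- is separability of $H$ inside a correctly chosen ambient product, established directly rather than transported. Concretely: (i) Wilton's theorem that limit groups are subgroup separable is applied factor by factor to decide whether each coordinate $g_i$ of $\iota(w)$ lies in $H_i$; if not, $w\notin H$, and if so one \emph{replaces the ambient group} by $D=H_1\times\cdots\times H_n$ (a finitely presented product of limit groups, by Lemma \ref{l:makeP}) rather than staying in $\ee{S}$; (ii) factors with $H\cap\G_i=1$ are eliminated by induction on the number of factors, using the fact that projecting away from such a factor is injective on $H$ (and Proposition \ref{p:makeFP} is what detects these factors algorithmically); (iii) once $H$ is a \emph{full} subdirect product of $D$, Corollary \ref{c:sdp-sep} gives separability of $H$ in $D$, and Remark \ref{r:sepbl} finishes. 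Your proposal invokes none of (i)--(iii); in particular, without (i) you cannot certify non-membership when $\iota(w)\notin\prod_i H_i$, and without (ii) you cannot apply the VSP-based separability results, which require fullness.
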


Following our work, alternative approaches
 to the conjugacy and membership problems
were  developed in \cite{BWilt} and  \cite{CZ}.

In the final section of this paper we make a few remarks about the
isomorphism problem for finitely presented residually free groups,
taking account of the canonical embeddings 
$S\hookrightarrow\ee{S}$. 
 
 \smallskip
 
This paper is organised as follows. Our first goal is to prove
an effective version of the
Asymmetric 1-2-3 Theorem;
this is achieved in Section \ref{s:asym}.
In Section \ref{s:sdp} we establish 
Theorem \ref{t:pairs}. In Section \ref{s:exs} we 
construct the groups described in Theorem \ref{t:exs}.
In Section \ref{s:char} we establish the two  characterisations of
finitely presented residually free groups promised
earlier: we prove Theorem \ref{t:main} and Theorem \ref{t:cchar}.
Section \ref{s:embed} 
is devoted to the proof of Theorem \ref{t:ee(S)} and other
aspects of the canonical embedding $S\hookrightarrow\ee{S}$.
Finally, in Section \ref{s:decide}, we turn our attention 
to decidability and enumeration problems, proving Theorems
\ref{t:re}, \ref{t:conj}, \ref{lt:makeFP} and \ref{t:memb}.

Most of the results in this paper were proved in our preprint \cite{BHMS2},
posted in 2008.  In the intervening period there have been a number of related
developments by various authors \cite{BWilt,CZ,DPS,GL,desi,KS}.  In presenting the current article, we
have tried to strike a balance between the sometimes competing goals of presenting a
single coherent account of our original material, and of taking these later developments into account.

We thank   G. Baumslag,    W. Dison,
D. Kochloukova, A. Myasnikov, Z. Sela,    H. Wilton and, most particularly,
M. Vaughan-Lee for helpful comments and suggestions relating to this work.
We are also grateful to an anonymous referee for a careful reading of an earlier
version of the paper, and insightful comments that led to significant improvements.

\section{The Effective Asymmetric 1-2-3 Theorem}\label{s:asym}

In this section we prove the following effective version of Theorem \ref{t:123}. 
The basic Asymmetric 1-2-3 Theorem states that a certain type of
fibre product is finitely presented, whereas the effective version provides
an algorithm that, given natural input data, constructs a finite presentation
for the fibre product.
This enhanced version of the theorem will play a crucial role in our
proof that the class of finitely presented residually free groups
is recursively enumerable. 

In order to gain a full understanding of the proof that we are going to present, the reader
should be familiar with the original proof of the 1-2-3 Theorem as presented in \cite[\S\S 1.4--1.5]{BBMS}.
In particular, we shall not rehearse the reasons why considerations of $\pi_2\mathcal Q$ enter naturally into
the proof (cf.~remark \ref{r:needPi2}). We remind the reader that,
given a finite presentation $\mathcal Q\equiv\<X\mid R\>$ for a group $Q$, one can define
the second homotopy group of $\pi_2\mathcal Q$ to be $\pi_2$
of the standard 2-complex $K$ of the presentation, regarded as
a module over $\Z Q$ via the identification $Q=\pi_1K$. In the
present context, though, it is better to regard elements
of $\pi_2\mathcal Q$ as equivalence
classes of identity sequences $[(w_1,r_1),\dots, (w_m,r_m)]$,
where the $w_i$ are elements of the free group $F(X)$, 
the $r_i\in R^{\pm 1}$, and where $\prod_{i=1}^m w_i^{-1}r_iw_i$
is equal to the empty word in $F(X)$; equivalence is defined by Peiffer
moves, and the action of $Q$ is induced by the obvious conjugation action of
$F(X)$; see \cite{whead}.

We shall also need the following observations, which are addressed in a more pedestrian manner in \cite{BBMS}.

\begin{remark} \label{r:natPres}
Let $f:\G\to Q$ be an epimorphism where $Q=\<X\mid R\>$. Then
\begin{enumerate}
\item 
$\G$ can be presented as  $\G=\<X, C\mid \hat R,\, S\>$, so that $f$ is given by $f(x)=x$ for all $x\in X$ and
$f(c)=1$ for all $c\in C$; there is relation $\hat r\in\hat R$ of the form $r(X)=u(C)$ for 
each $r\in R$; and $S$ consists of
words that are products of subwords drawn from $C^*$, the set of conjugates of the symbols 
$c\in C$ by words in the free group on $X$. 
\item If $\ker f$ is finitely generated, then one can further assume that $C$ generates $\ker f$, and that $S$ consists of
two sets of relations: the first set expresses the fact that $\ker f$ is normal,
 with a relation of the form $x^{-\epsilon}cx^{\epsilon}=w(C)$
for each $x\in X, c\in C$ and $\epsilon = \pm 1$; the second set consists of words in the free group on $C$.
\end{enumerate}
\end{remark}

\begin{thm}\label{t:123eff} There exists a Turing machine that,
given the following data describing group homomorphisms
$f_i:\G_i\to Q\ (i=1,2)$, will output a finite presentation of
the fibre product of these maps provided that both the $f_i$
are surjective and at least one of the
kernels $\ker f_i$ is finitely generated. (If either of these
conditions fails, the machine will not halt.)

{\underline{Input Data:}}
\begin{enumerate}
\item A finite presentation $\mathcal Q\equiv \<X\mid R\>$ for $Q$.
\item A finite presentation $\<\underline a^{(i)}\mid \underline r^{(i)}\>$
for $\G_i\ (i=1,2)$.
\item $\forall a\in\underline a^{(i)}$, a word  $\hat a\in F(X)$
such that $\hat a=f_i(a)$ in $Q$.
\item A finite set of identity sequences that generates $\pi_2\mathcal Q$
as a $\Z Q$-module.
\end{enumerate}
\end{thm}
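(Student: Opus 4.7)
The plan is to trace through the non-effective proof of Theorem \ref{t:123} (as laid out in \cite{BBMS}, with the asymmetric version in \cite{BHMS2} and \cite{will}) and verify that each step in the construction of a finite presentation for $P$ can be executed algorithmically given the listed input. First, I would process the input to produce natural presentations of $\G_1$ and $\G_2$ in the sense of Remark \ref{r:natPres}: for each $x\in X$, enumerating words in $\underline a^{(i)}$ yields $\tilde x^{(i)}\in F(\underline a^{(i)})$ with $f_i(\tilde x^{(i)})=x$ in $Q$, the equality being certified by writing the relevant word in $F(X)$ as a finite product of conjugates of relators in $R$ --- a semi-decidable condition that succeeds precisely because $f_i$ is surjective. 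Further enumeration then produces a finite generating set $C_1$ for $K_1:=\ker f_1$ (halting by the hypothesis that $K_1$ is finitely generated) and a finite set $C_2$ of normal generators for $K_2:=\ker f_2$ in $\G_2$. Re-writing the given presentations of $\G_i$ in the form $\<X,C_i\mid \hat R_i,S_i\>$ is then more enumeration over consequences of $\underline r^{(i)}$.

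Second, I would assemble a candidate presentation of $P$ on the generators $g_x=(\tilde x^{(1)},\tilde x^{(2)})$ for $x\in X$, $\alpha_c=(c,1)$ for $c\in C_1$, and $\beta_d=(1,d)$ for $d\in C_2$, imposing three families of relations: (i) those inherited from the natural presentations of $\G_1$ and $\G_2$, rewritten in the letters $g_x,\alpha_c,\beta_d$ (in particular, each $\hat r\in\hat R_i$ yields a relation of the form $r(g_x)=u_r(\alpha_c)\cdot v_r(\beta_d)$, recording how the two lifts of $r$ to $\G_1$ and $\G_2$ are expressed in terms of $C_1$ and $C_2$); (ii) the commutators $[\alpha_c,\beta_d]=1$ for all $c\in C_1$ and $d\in C_2$; and (iii) for each of the given generators of $\pi_2\mathcal Q$, one relation obtained by lifting the identity sequence $[(w_1,r_1),\dots,(w_m,r_m)]$ into $\G_2$ via the natural presentation and recording the resulting element of $K_2$ as a word in the $\beta_d$ conjugated by $g_x$'s.

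The main obstacle is step (iii): it is here that $\pi_2\mathcal Q$ enters essentially (cf.~Remark \ref{r:needPi2}) and that the asymmetric hypothesis is exploited. Because $K_1$ is finitely generated, all contributions arising from different choices of lifts of a word in $X$ to $\G_1$ are absorbed by the finitely many relations on the $\alpha_c$'s, whereas the analogous contributions on the $\G_2$-side form a module that is generated over $\Z Q$ by the image of $\pi_2\mathcal Q$; a finite $\Z Q$-module generating set thus suffices to produce the required finite list of relations. Completeness of the resulting presentation is precisely the content of the non-effective Theorem \ref{t:123}: once the generators and relations above are in hand, the argument of \cite{BBMS} (in its asymmetric form) shows that they suffice. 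Effectivity then reduces to the bookkeeping described above and the observation that the whole procedure is uniform in the input data; it halts on input satisfying the stated hypotheses and fails to halt otherwise.
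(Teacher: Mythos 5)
Your proposal is correct and follows essentially the same route as the paper: both make the 1-2-3 construction of \cite{BBMS} effective by producing ``natural'' presentations of the $\G_i$ adapted to the $f_i$ (Remark \ref{r:natPres}), taking the relations of those presentations together with the commutators $[\alpha_c,\beta_d]$, and adjoining finitely many relators derived from the given $\pi_2$-generators, with completeness of the resulting presentation deferred to \cite[Theorem 1.2]{BBMS}; the paper merely organises the semi-decision procedure differently, enumerating a recursive class $\mathcal{C}(\mathcal{Q})$ of presentations in a standard form and searching for isomorphisms with the given $\G_1,\G_2$ rather than computing the natural presentations directly by enumerating consequences. One small point to repair: the algorithm cannot assume that it is $\ker f_1$ which is finitely generated---it only knows that at least one of the two kernels is---so the search for a finite generating set of a kernel (and the corresponding assignment of the roles of $\G_1$ and $\G_2$) must be run for both orderings in parallel, which is exactly the paper's ``searching diagonally for pairs with $\{i,j\}=\{1,2\}$''.
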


\begin{proof}
We associate to
a fixed finite group-presentation $\mathcal{Q}\equiv\< X\mid R\>$ of a group $Q$
the class $\mathcal{C}(\mathcal{Q})$ of finite group-presentations that have the 
form
$$\<X\sqcup A\sqcup B\mid S_1,S_2,S_3,S_4,S_5\>,$$
where
\begin{itemize}
\item $S_1$ consists of a relator $r(X)u_r(A)v_r(B^*)$ for each relator $r=r(X)\in R$, where
$B^*$ is the set of formal conjugates of the letters $b\in B$ by words in 
the free monoid on $X\cup X^{-1}$ and $v_r(B^*)$ is a
word in the free group on this set, while $u_r(A)$ is a word in the free group on $A$;
\item $S_2$ consists of a relator $ax^\epsilon w_{a,x,\epsilon}(A)x^{-\epsilon}$
for each $a\in A$, $x\in X$ and $\epsilon=\pm 1$, with the $w_{a,x,\epsilon}(A)$
words in the free group on $A$; 
\item $S_3=\{aba^{-1}b^{-1}\mid a\in A,b\in B\}$;
\item $S_4$ is a finite set of words in the free group on $A$;
\item $S_5$ is a finite set of words in the free group on $B^*$.
\end{itemize}

It is clear the class $\mathcal{C}(\mathcal{Q})$ is recursively enumerable.  Moreover, each group
$G$ given by a presentation $\mathcal{P}\in\mathcal{C}(\mathcal{Q})$ comes naturally
equipped with an epimorphism $\pi$ onto $Q$, namely the map that at the level
of generators restricts to the identity on $X$ and sends each element
of $A$ and $B$ to $1$.  We write
$G_A$ (resp. $G_B$) to denote the quotient of $G$ by the normal closure
$N_A$ of $A$ (resp. the normal closure $N_B$ of $B$) and regard $\pi$ as the
canonical map
$G\to G/N_AN_B\cong Q$. Note that the diagonal map $\Delta:G\to G_A\times G_B$ sends $G$ onto
the fibre-product of the epimorphisms $G_A\to Q$ and $G_B\to Q$.
Note also that $N_A$ is generated by $A$ as a group (not just a normal
subgroup) and that $N_A$ commutes with $N_B$.

Remark \ref{r:natPres}(1) assures us
that any finitely presented group $\G_1$ admitting an epimorphism $f_1:\G_1\to Q$ is isomorphic
to $G_A$ for some (indeed infinitely many) $G$ given by a presentation from
$\mathcal{C}(\mathcal{Q})$, via an isomorphism $\phi_A:G_A\to \G_1$ such that
$f_1\circ\phi_A$ is the natural epimorphism 
$$\pi_A:G_A=G/N_A\to G/N_AN_B.$$ 
Moreover, if one has a finitely presented group $\G_2$ admitting an epimorphism $f_2:\G_2\to Q$
with $\ker f_2$ finitely generated, then Remark \ref{r:natPres}(2) assures us
$\G_2$ is also isomorphic to $G_B$ for some $G$ given by a presentation in
 $\mathcal{C}(\mathcal{Q})$, via an isomorphism
$\phi_B:G_B\to \G_2$ such that $$f\circ\phi_B=\pi_B:G_B=G/N_B\to G/N_AN_B.$$ 

When one constructs a presentation in $\mathcal{C}(\mathcal{Q})$, the choice of
words involving $A$ and those involving $B$ can be made entirely independently of one another;
so the construction of maps $\phi_A$ and $\phi_B$ in the preceding paragraph can be
carried out simultaneously, i.e. using the same $G$. In other words,
for any
pair of  finitely
presented groups   $\G_1$ and $\G_2$  and epimorphisms
$f_1:\G_1\to Q$, $f_2:\G_2\to Q$ with $\ker f_2$
finitely generated, there is a group $G$ given by a presentation from $\mathcal{C}(\mathcal{Q})$
and isomorphisms $\phi_A:G_A\to \G_1$, $\phi_B:G_B\to \G_2$ such that $f_1\circ\phi_A=\pi_A$ and
$f_2\circ\phi_B=\pi_B$.

We now have sufficient notation to describe the algorithm that we seek. 
Given input data (1), (2) and (3), the algorithm
works systematically through an enumeration of $\mathcal{C}(\mathcal{Q})$, searching diagonally
for pairs $\phi_A:G_A\to \G_i$, $\phi_B:G_B\to \G_j$ as above, with $\{i,j\}=\{1,2\}$.  By
hypothesis $\G_1$ and $\G_2$ are finitely presented, and at least one of the kernels of 
$\G_k\to Q$
($k=1,2$) is finitely generated.  So our search procedure will eventually terminate successfully.

Let $\mathcal{P}\in\mathcal{C}(\mathcal{Q})$ be the presentation found by the procedure, let $G$
be the group given by $\mathcal{P}$,
and let $N_A,N_B,G_A,G_B$ be as defined above.  Then,
$(\phi_A\times\phi_B)\circ\Delta$ maps $G$ onto the fibre product $P<\G_1\times\G_2$ of $f_1$
and $f_2$, and the kernel of this map
is  $N_A\cap N_B$.  Since $N_A$ and $N_B$ commute, it follows that $N_A\cap N_B$ is central
in $N_AN_B$, and that conjugation in $G$ gives it the structure of a $\Z Q$-module.
To complete our proof, it suffices to show that we can find algorithmically 
a finite generating set $Z$
for $N_A\cap N_B$ as a $\Z Q$-module; adjoining $Z$
to the relators of $\mathcal{P}$ will then present $P$, as required.

To obtain $Z$, we follow the construction of \cite[\S\S 1.4,1.5]{BBMS}.
Killing the generators $B$ in $\mathcal{P}$ gives a presentation of $G_B$
of the form $\<X\sqcup A\mid S'_1\cup S_2\cup S_4\>$ with $S'_1=\{ru_r(A)\colon r\in R\}$,
as in \cite[\S 1.4]{BBMS}.  By \cite[Theorem 1.2]{BBMS}, the normal closure
$N_A/(N_A\cap N_B)$ of $A$ in $G_B$ has a presentation on the generators $A$
with relators all the $F(X,A)$-conjugates of
$S_2\cup S_4\cup S_6\cup Z$, where $S_6=\{[ru_r,a]\colon r\in R,a\in A\}$
and where $Z$ is a finite set of words in $A^{\pm 1}$ derived  by a simple algorithm
from a finite set of
identity sequences that generate  $\pi_2(\mathcal{Q})$ as a $\Z Q$-module. 
(Note: this is where input datum (4) enters.)
  Now the relators $S_2$ and $S_4$ are already
relators of $\mathcal{P}$, while the relators $S_6$ can readily be derived from
the relators $S_1\cup S_2\cup S_3$ of $\mathcal{P}$.  It follows that the elements of $N_A$
represented by the words in $Z$ generate $N_A\cap N_B$ as a normal subgroup of $G$ (in other
words as a $\Z Q$-module), as required.     
\end{proof}

\begin{remark}\label{r:123map} The algorithm in the preceding proof does not just produce a finite presentation $\P\equiv \< T\mid 
\Sigma \>$ of the
fibre product $P<\G_1\times\G_2$, it also produces an explicit isomorphism $\Phi:|\P|\to P$ (induced by $(\phi_A\times\phi_B)\circ\Delta$
in the notation of the proof). It follows that if one has a preferred finite generating set $Y$ for $P$, then one can construct a finite
presentation for $P$ with generators $Y$. Indeed, a naive search will identify, for each generator $t\in T$, a word $u_t$ so that
$\Phi (t)=u_t(Y)$ in $P$. Then $P=\<T\sqcup Y\mid \Sigma,\, t^{-1}u_t(Y) \, (t\in T)\>$, and obvious Tietze moves remove the generators $T$.
\end{remark}

\begin{remark}\label{r:needPi2} There does not exist an algorithm that, on input a finite presentation of a group of type
${\rm{FP}}_\infty$ can output a finite set of module generators for $\pi_2$ of the presentation, so the last piece of 
input data in the above theorem cannot easily be dispensed with. In fact, Bridson and Wilton \cite{BWilt2} have proved
that it is essential: there exists a recursive sequence of maps $\phi_n:\Gamma_n\to Q_n$, with $\G_n$ and $Q_n$ given
by finite presentations, such that each $Q_n$ is of type ${\rm{FP}_\infty}$ and each kernel $\ker \phi_n$ is finitely generated,
but the first Betti number of the associated (finitely presentable) fibre product $P_n<\G_n\times\G_n$ cannot be determined algorithmically (whereas
it could be if one had a finite presentation in hand).
\end{remark}

\section{Subdirect products and VSP}\label{s:sdp}

Throughout this section we consider subdirect products of arbitrary
finitely presentable groups.  In later sections we restrict attention
to the case where the direct factors are limit groups.

Given a direct product $D:=G_1\times\cdots\times G_n$,
we shall consistently write $p_i$ and $p_{ij}$
for the projection
homomorphisms $p_i:D\to G_i$ and $p_{ij}: D\to G_i\times G_j$
($i,j=1,\dots,n$). 
We implicitly assume that $n\ge 2$.

We remind the reader that a subgroup $S<D$
is said to be  VSP ({\em{virtually surjective on pairs}}) if for all $i,j\in\{1,\dots,n\}, i\neq j$, the
projection $p_{ij}(S)<G_i\times G_j$ has finite index.

\begin{thm}[= Theorem \ref{t:pairs}]  
Let $S< G_1\times\dots\times G_n$ be a
subgroup of a direct product of finitely presented groups. If $S$ is VSP, then $S$ is finitely presentable.
\end{thm}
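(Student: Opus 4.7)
The natural plan is induction on $n$. The base case $n=2$ is immediate: VSP forces $S$ to have finite index in $G_1\times G_2$, so $S$ is finitely presented as a finite-index subgroup of a finitely presented group.

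For the inductive step $n\ge 3$, I would realise $S$ as a fibre product and invoke the Asymmetric 1-2-3 Theorem (Theorem \ref{t:123}). Set $H := p_{1,\dots,n-1}(S)$ and $L := p_n(S)$. Since $p_{ij}(H) = p_{ij}(S)$ for all $i,j<n$, the group $H$ inherits the VSP property and is finitely presented by the inductive hypothesis; and VSP applied to any pair $(i,n)$ forces $L$ to have finite index in $G_n$, so $L$ is finitely presented. Writing $M := S\cap(G_1\times\cdots\times G_{n-1})$ and $N := S\cap G_n$, these are commuting normal subgroups of $S$ (they sit in complementary direct factors of the ambient product), and one obtains the fibre-product presentation $S = H\times_Q L$ with $Q := S/MN$.

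The main obstacle is verifying the remaining hypotheses of Theorem \ref{t:123}: that $Q$ is of type $\F_3$ and that at least one of $M,N$ is finitely generated. The Stallings--Bieri subgroups of a direct product of non-abelian free groups illustrate the difficulty: for them the naive decomposition gives $Q\cong\Z$ (which is fine, as $\Z$ has type $\F_\infty$), but both $M$ and $N$ are of infinite rank, so the finite-generation condition on the kernel must be obtained more subtly. The strategy will be to exploit VSP at every pair, not just at the one distinguished by the splitting: using pairwise finite-index data, one aims to show that $Q$ is virtually nilpotent (hence of type $\F_\infty\supseteq\F_3$), and---possibly after replacing $S$ by a finite-index subgroup, which preserves both VSP and the question of finite presentability---to arrange the splitting so that one of $M,N$ becomes finitely generated, with the inductive presentation of $H$ supplying the necessary control on the relations.

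With both hypotheses verified, Theorem \ref{t:123} delivers finite presentability of $S$. Separability will then follow by invoking the structural results of \cite{BM,BHMS1}: VSP subgroups virtually contain a term of the lower central series of a finite-index subgroup of $D=G_1\times\cdots\times G_n$, which makes $D/S$ virtually nilpotent and hence residually finite, from which separability of $S$ in $D$ is standard.
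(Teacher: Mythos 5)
You have set up exactly the decomposition the paper uses: project away from $G_n$ to get $T=p_{1,\dots,n-1}(S)$, handle $T$ by induction, and realise $S$ as the fibre product of $T\to Q$ and $G_n\to Q$ with $Q=G_n/(S\cap G_n)$, so that the Asymmetric 1-2-3 Theorem applies. But the two hypotheses you flag as ``the main obstacle'' are precisely the mathematical content of the proof, and your proposal does not establish either of them --- it only announces that ``one aims to show'' them. What is missing is the content of Proposition \ref{p:nilp}: (i) for the finite-index subgroups $G_i^0=\bigcap_{j\ne i}\bigl(p_{ij}(S)\cap G_i\bigr)$ one has $\g_{n-1}(G_i^0)\subset S$, proved by writing $([x_1,\dots,x_{n-1}],1,\dots,1)$ as a commutator of elements of the subgroups $S\cap\ker p_j$ (VSP supplies, for each $x\in G_i^0$ and each $j\ne i$, an element of $S$ with $i$-th coordinate $x$ and $j$-th coordinate $1$); this is what makes $Q$ virtually nilpotent and hence of type $\F_3$. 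And (ii) $M=S\cap\ker p_n$ is finitely generated, which requires assembling a finite generating set from three sources: generators of the finite-index subgroups $p_i(M)\le G_i$, normal generators of each $L_i=S\cap G_i$ (available because $G_i/L_i$ is finitely presented, being virtually nilpotent), and lifts of generators of the finitely generated virtually nilpotent group $M/(L_1\times\cdots\times L_{n-1})$. Without these two arguments the induction does not close.

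Your motivating example also rests on a factual error. For the finitely presented Stallings--Bieri subgroup of $F_2\times\dots\times F_2$ ($n\ge 3$ factors), the kernel $M=S\cap\ker p_n$ is the Stallings--Bieri subgroup on $n-1$ factors, which is of type $\F_{n-2}$ and in particular finitely generated; only $N=S\cap G_n\cong\ker(F_2\to\Z)$ has infinite rank. So the naive decomposition already satisfies the asymmetric hypothesis there, and no rearrangement by passing to finite-index subgroups is needed in general: under VSP, $S\cap\ker p_n$ is \emph{always} finitely generated, and proving this is the crux. (A smaller point: your separability remark refers to ``$D/S$'', but $S$ need not be normal in $D$; the correct route is to quotient by $N_1\times\cdots\times N_n$ with $N_i=\g_{n-1}(G_i^0)$ made normal in $G_i$, and use subgroup separability of the resulting virtually nilpotent quotient.)
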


We will deduce this theorem from the Asymmetric 1-2-3 Theorem by
combining some well-known facts about virtually nilpotent groups
with the following proposition, which generalises similar results in
 \cite{BM} and \cite{BHMS1}.

\begin{prop}\label{p:nilp}
 Let $G_1,\dots,G_n$ be groups
and let $S< G_1\times\dots\times G_n$ be a subgroup.
If $S$ is VSP then 

\begin{enumerate}
\item there exist finite-index
subgroups $G_i^0 \subset G_i$ such that $\gamma_{n-1}(G_i^0)\subset S$.
\end{enumerate}
If, in addition, the groups $G_i$ are all finitely generated, then 
\begin{enumerate}
\item[(2)] $L_i:=S\cap G_i$ is finitely generated as a normal subgroup of $S$, 
\item[(3)]  $N_i:=S\cap \ker (p_i) $ is finitely generated, and 
\item[(4)] $S$ is itself finitely generated.
\end{enumerate}
\end{prop}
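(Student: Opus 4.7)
The plan is to prove (1) directly via a coordinate-killing commutator construction, then to derive (2)--(4) by combining (1) with the Noetherian property of finitely generated virtually nilpotent groups.

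For (1), define $G_i^{ij}:=\{g\in G_i\mid(g,1)\in p_{ij}(S)\}$, which has finite index in $G_i$ because $p_{ij}(S)$ has finite index in $G_i\times G_j$, and set $G_i^0:=\bigcap_{j\neq i}G_i^{ij}$, still of finite index. Given $g_1,\dots,g_{n-1}\in G_i^0$ and an enumeration $\{j_1,\dots,j_{n-1}\}=\{1,\dots,n\}\smallsetminus\{i\}$, choose $\tilde g_k\in S$ with $p_i(\tilde g_k)=g_k$ and $p_{j_k}(\tilde g_k)=1$. The iterated commutator $[\tilde g_1,\dots,\tilde g_{n-1}]$ has $i$-th coordinate $[g_1,\dots,g_{n-1}]$, while for each $\ell\neq i$ its $\ell$-th coordinate vanishes: the argument $\tilde g_k$ with $j_k=\ell$ has trivial $\ell$-th coordinate, and any iterated commutator involving a trivial argument is itself trivial. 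Hence $[g_1,\dots,g_{n-1}]\in S$ (as an element of the subgroup $G_i\subset D$), and (1) follows since $\gamma_{n-1}(G_i^0)$ is generated by such commutators.

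Now assume each $G_i$ is finitely generated. After replacing each $G_i^0$ by $G_i^0\cap p_i(S)$ (still of finite index, with (1) preserved by monotonicity), we may assume $G_i^0\subset p_i(S)$; then every $g\in G_i^0$ lifts to some $\tilde g\in S$, and conjugation by $\tilde g$ on $G_i\subset D$ realizes conjugation by $g$ in $G_i$. For (2): since $G_i^0$ is finitely generated, $\gamma_{n-1}(G_i^0)$ is the normal closure in $G_i^0$ of a finite set $\{c_{i,1},\dots,c_{i,m_i}\}$ of basic commutators, and by the preceding observation these also generate $\gamma_{n-1}(G_i^0)$ as a normal subgroup of $S$; meanwhile $L_i/\gamma_{n-1}(G_i^0)$ embeds in the finitely generated virtually nilpotent (hence polycyclic, hence Noetherian) group $G_i/\gamma_{n-1}(G_i^0)$ and is therefore finitely generated, say by the images of $y_1,\dots,y_p\in L_i$. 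Together $\{c_{i,k}\}\cup\{y_k\}$ normally generates $L_i$ in $S$, giving (2).

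For (3), the crucial extra ingredient is that $p_\ell(N_i)$ has finite index in $G_\ell$ for each $\ell\neq i$: indeed $p_{i\ell}(S)\cap(1\times G_\ell)=1\times p_\ell(N_i)$, and the left-hand side has finite index in $1\times G_\ell$ because $p_{i\ell}(S)$ has finite index in $G_i\times G_\ell$. Pick lifts $z_\ell^1,\dots,z_\ell^{s_\ell}\in N_i$ of generators of the finitely generated group $p_\ell(N_i)$, and arrange $G_\ell^0\subset p_\ell(N_i)$. Set $K:=\prod_{\ell\neq i}\gamma_{n-1}(G_\ell^0)\subset N_i$; the quotient $N_i/K$ embeds in the Noetherian group $\prod_{\ell\neq i}G_\ell/\gamma_{n-1}(G_\ell^0)$ and is finitely generated, say by lifts $y_1,\dots,y_p\in N_i$. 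Let $H$ be the subgroup of $N_i$ generated by all of the $c_{\ell,k}$, $y_k$, and $z_\ell^m$. Then $HK=N_i$; moreover, conjugating each $c_{\ell,k}$ by products of the $z_\ell^m$ (whose $\ell$-th coordinates generate $G_\ell^0$) produces all $G_\ell^0$-conjugates of $c_{\ell,k}$, which together generate $\gamma_{n-1}(G_\ell^0)$ as a subgroup. Hence $K\subset H$, so $H=N_i$, establishing (3). Then (4) is immediate: $S/N_i\cong p_i(S)$ is finitely generated, making $S$ an extension of a finitely generated group by a finitely generated group.

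The main obstacle is (3): viewed in $\prod_{j\neq i}G_j$, the subgroup $N_i$ does not in general inherit VSP from $S$---for instance, for $S=\ker(\Z^3\to\Z,\,(a,b,c)\mapsto a+b+c)$, one has $N_1=\{(b,-b)\mid b\in\Z\}$, of infinite index in $\Z^2$---so a naive induction on $n$ fails. The resolution exploits that although $N_i$ need not be VSP, its projections $p_\ell(N_i)$ are still of finite index in $G_\ell$, and lifting generators of $p_\ell(N_i)$ to $N_i$ lets one implement the $G_\ell^0$-conjugation action on each $\gamma_{n-1}(G_\ell^0)$ internally, generating these subgroups---which typically fail to be finitely generated themselves---inside a finitely generated $H\subset N_i$.
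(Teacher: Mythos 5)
Your proof is correct and follows essentially the same route as the paper's: the same coordinate-killing iterated-commutator construction for (1), the same observation that each projection $p_\ell(N_i)$ has finite index in $G_\ell$, and the same assembly of a finite generating set for $N_i$ from three pieces (lifts of generators of the projections, normal generators of the coordinate subgroups, and generators modulo a Noetherian virtually nilpotent quotient). The only point to tidy is that you form quotients such as $G_i/\gamma_{n-1}(G_i^0)$ and $\prod_{\ell\neq i}G_\ell/\gamma_{n-1}(G_\ell^0)$, whereas $\gamma_{n-1}(G_i^0)$ need not be normal in $G_i$ because $G_i^0$ need not be; replacing each $G_i^0$ by the intersection of its $G_i$-conjugates at the outset (exactly as the paper does in Corollary \ref{c:VSPsep}) repairs this without altering anything else in your argument.
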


\begin{proof} 
The conditions imply that $p_i(S)$ is a finite index subgroup of $G_i$, and by passing to subgroups of finite index
we may assume without loss that $S$ is subdirect.

Let  
$$G_1^0=\{g\in G_1 \mid \forall j\neq 1 \, \exists (g,*,\dots,*,1,*\dots)\in
N_j\}=\bigcap_{j=2}^n \left(p_{1j}(S)\cap G_1\right)$$
and define $G_i^0$ similarly. As $p_{ij}(S)\subset G_i\times G_j$ is of
finite index, $G_i^0$ has finite index in $G_i$ for $i=1,\dots,n$.

For notational convenience we focus on $i=1$ and explain why
$\gamma_{n-1}(G_1^0)\subset S$. The key point to observe is that for all
$x_1,\dots,x_{n-1}\in G_1^0$ the commutator $([x_1,x_2,\dots,x_{n-1}],1,
\dots,1)$ can be expressed as the commutator of elements from the 
subgroups $N_j\subset
S$; explicitly it is
$$
[\, (x_1,1,*,\dots,*),\ (x_2,*,1,*,\dots,*),\dots,(x_{n-1},*,\dots,*,1)\,
].
$$
This proves the first assertion.

For (2), note that since $S$ is subdirect, $S\cap G_i$ is
normal in $G_i$ and the normal closure in $G_i$ of any
set $T\subset S\cap G_i$ is the same as its normal closure
in $S$. Since $G_i$ is finitely generated, 
$G_i/(S\cap G_i)$ is a finitely generated virtually
nilpotent group; hence it is finitely presented and
$S\cap G_i$ is the normal closure in $G_i$
(hence $S$) of a finite subset.

Towards proving (3), note that  the image of  $N_1=S\cap \ker (p_1)$
in $G_i$ under the projection $p_i$ has finite index for $2\le i\le n$, since
$p_{1i}(S)$ has finite index in $G_1\times G_i$ and $N_1$ is the 
kernel of the restriction to $S$ 
of $p_1=p_1\circ p_{1i}$.   
In particular $p_i(N_1)$ is finitely generated.

Note also that $L_i= S\cap G_i$ is the normal closure of
a finite subset of $p_i(N_1)$ by (2).

Now let $L:=L_2\times\cdots\times L_{n}$.
Then
$N_1/L$ is a subgroup of the finitely generated virtually nilpotent
group
$$\frac{G_2\times\cdots\times G_{n}}{L}
\cong\frac{G_2}{L_2}\times\cdots\times\frac{G_{n}}{L_{n}},$$
and hence is also finitely generated (and virtually nilpotent).

Putting all these facts together, we see that we can choose a finite subset $X$ of $N_1$ such that:
\begin{itemize}
\item $p_i(X)$ generates $p_i(N_1)$ for each $i=2,\dots,n$;
\item $X\cap L_i$ generates $L_i$ as a normal subgroup of $p_i(N_1)$,
for each $i=2,\dots,n$;
\item $\{xL:~x\in X\}$ generates $N_1/L$.
\end{itemize}
These three properties ensure that $X$ generates $N_1$, and the 
proof of (3) is complete.

We can express  $S$   as an  extension of $N_1$
by $G_1$ which are both finitely generated (using (3)), and (4) follows immediately.
\end{proof}

\begin{remarks}\label{ross}  

(1) A slight variation on the above proof of (3) shows that if
$G_1,\dots,G_n$ are finitely generated groups and $H<G_1\times\dots\times G_n$
is a subdirect product whose intersection with each of the
factors $G_i$ contains some term of the lower central
series of a subgroup of finite index in $G_i$, then $H$ is finitely
generated.

(2)
Finitely generated
virtually nilpotent groups are $\F_\infty$,
i.e. have classifying spaces with finitely many
cells in each dimension. Indeed this is true of virtually polycyclic
groups $P$, because such a group has a torsion-free subgroup of finite
index that is poly-$\Z$, and hence is the fundamental group of a
closed aspherical manifold. 
If $B$ has type $\F_\infty$ (e.g.
a finite group) and $A$ has type $\F_\infty$ (e.g. the fundamental
group of an aspherical manifold), then  any extension 
of $A$ by $B$ is also of type $\F_\infty$ (see \cite{ross} Theorem 7.1.10).
\end{remarks}

\subsection{Proof of Theorem \ref{t:pairs}} The hypothesis on $p_{ij}(S)$
implies that the image of $S$ in each factor $G_i$ is of finite index.
Replacing the $G_i$ and $S$ with subgroups of finite index
does not
alter their finiteness properties. Thus we may assume that
$S$ is a subdirect product. Let $L_i = G_i\cap S$ and note that $L_i$
is normal in both $S$ and $G_i$. Proposition \ref{p:nilp} tells us that
$Q_i:=G_i/L_i$ is virtually nilpotent; in particular it is of type $\F_3$
(see Remark \ref{ross}).

Assuming that $S$ is a subdirect product, we proceed by induction on $n$.
The base case, $n=2$, is trivial.

Let $q:G_1\times\cdots\times G_n\to G_1\times\cdots\times G_{n-1}$
be the projection with kernel $G_n$ and let  $T=q(S)$. By the inductive hypothesis,
$T$ is finitely presented.  We may regard $S$ as a subdirect product of
$T\times G_n$. Equivalently, writing 
 $N_n=T\cap S$ and noting that 
$$\frac{T}{N_n}\cong\frac{S}{N_n\times L_n}\cong\frac{G_n}{L_n}= Q_n,$$
we see that $S$ is  the fibre product associated to the
short exact sequences $1\to N_n \to T \to Q_n \to 1$ and
$1\to L_n\to G_n \to Q_n\to 1$. Thus, by the Asymmetric 1-2-3 Theorem,
our induction is  complete because according to Proposition \ref{p:nilp}(3), $N_n$ is
finitely generated.  \qed

\subsection{Separability}

It remains to prove the assertion in the last phrase of Theorem \ref{t:pairs}.

\begin{lemma}\label{l:nilp-sep} Let $D=G_1\times\dots\times G_n$ where the $G_i$
are finitely generated. If a subgroup $H<D$ is such that, for each $i$,
 $H\cap G_i$
contains a subgroup $N_i$ that is normal in $G_i$ with virtually nilpotent quotient,
then $H$ is separable in $D$.
\end{lemma}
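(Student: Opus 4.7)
My plan is to reduce the separability statement to Mal'cev's theorem that finitely generated virtually polycyclic groups are subgroup separable.

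First I would observe that $N:=N_1\times\dots\times N_n$ is a subgroup of $H$ which is normal in $D$, and the quotient
$$ D/N \;\cong\; (G_1/N_1)\times\dots\times (G_n/N_n) $$
is a direct product of finitely generated virtually nilpotent groups. Since every finitely generated nilpotent group is polycyclic, $D/N$ is virtually polycyclic. By the classical theorem of Mal'cev, such groups are subgroup separable: every subgroup is an intersection of subgroups of finite index.

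Given $d\in D\ssm H$, the coset $dN$ is not in $H/N$ (because $N\subset H$). Applying subgroup separability of $H/N$ in $D/N$, I get a subgroup $\bar{K}< D/N$ of finite index with $H/N\subset \bar{K}$ and $dN\notin\bar{K}$. Pulling back yields a finite-index subgroup $K'< D$ with $H\subset K'$ and $d\notin K'$.

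To finish, I need a \emph{normal} finite-index subgroup $K\triangleleft D$ with $d\notin HK$. Let $K$ be the normal core of $K'$ in $D$, that is, the intersection of the finitely many conjugates of $K'$; this is normal of finite index. Since $K\subset K'$ and $H\subset K'$, we have $HK\subset HK' = K'$, and because $d\notin K'$ we conclude $d\notin HK$, as required. The only ingredient beyond bookkeeping is the Mal'cev separability theorem for virtually polycyclic groups, which I would cite; the rest is a routine pullback-and-core argument.
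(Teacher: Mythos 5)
Your proof is correct and follows essentially the same route as the paper: form $N=N_1\times\dots\times N_n$, note that $D/N$ is finitely generated and virtually nilpotent (hence virtually polycyclic and subgroup separable by Mal'cev), separate $H/N$ from $dN$ there, and pull back. Your extra step of passing to the normal core is a welcome bit of care, since the paper's definition of separability asks for a \emph{normal} finite-index subgroup $K$ with $d\notin HK$, a point the paper's own one-line conclusion glosses over.
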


\begin{proof} Let $N=N_1\times\dots\times N_n$. Then $D/N$ is virtually nilpotent,
hence subgroup separable. So given $g\in D\ssm H$ we can find 
a finite-index subgroup $K$ in $D/N$ that contains $H/N$ but not $gN$ (noting that
$N\subset H$). Then $KN$ has finite index in $D$ and separates $H$ from $g$.
\end{proof}

The following corollary completes the proof of Theorem \ref{t:pairs}.

\begin{corollary}\label{c:VSPsep}
If $H<D$ has the VSP property, then it is separable.
\end{corollary}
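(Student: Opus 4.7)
The goal is to deduce Corollary \ref{c:VSPsep} directly from Lemma \ref{l:nilp-sep}, so the plan is simply to produce, for each $i$, a subgroup $N_i \triangleleft G_i$ with $N_i \subseteq H \cap G_i$ and $G_i/N_i$ virtually nilpotent.

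The raw material comes from Proposition \ref{p:nilp}(1): since $H$ has VSP, there exist finite-index subgroups $G_i^0 < G_i$ such that $\gamma_{n-1}(G_i^0) \subseteq H$, where we identify $G_i$ with the $i$-th direct factor of $D$ (the elements of $\gamma_{n-1}(G_i^0)$ produced by the proof of Proposition \ref{p:nilp} indeed lie in $H \cap G_i$, since they have the form $([x_1,\dots,x_{n-1}],1,\dots,1)$).

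The only subtlety is that $G_i^0$ need not be normal in $G_i$, so $\gamma_{n-1}(G_i^0)$ may not be either. I would remedy this in the standard way by passing to the normal core: set $G_i^\ast := \bigcap_{g \in G_i} g G_i^0 g^{-1}$, which has finite index in $G_i$ (being a finite intersection of conjugates of a finite-index subgroup), is normal in $G_i$, and is contained in $G_i^0$. Now define $N_i := \gamma_{n-1}(G_i^\ast)$. Since $\gamma_{n-1}(\cdot)$ is a characteristic subgroup and $G_i^\ast \triangleleft G_i$, we have $N_i \triangleleft G_i$; and $N_i \subseteq \gamma_{n-1}(G_i^0) \subseteq H \cap G_i$. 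Moreover $G_i/N_i$ contains the finite-index subgroup $G_i^\ast/N_i$, which is nilpotent of class at most $n-2$, so $G_i/N_i$ is virtually nilpotent.

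With the $N_i$ in hand, Lemma \ref{l:nilp-sep} applies (the $G_i$ are finitely generated, being finitely presented) and yields the separability of $H$ in $D$. There is no genuine obstacle here beyond the normal-core manoeuvre; the entire content of the corollary is the conversion of the VSP hypothesis into the virtually-nilpotent-quotient hypothesis of Lemma \ref{l:nilp-sep}, and Proposition \ref{p:nilp}(1) performs exactly that conversion up to the cosmetic issue of normality.
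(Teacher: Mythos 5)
Your argument is correct and is essentially identical to the paper's proof: the paper likewise replaces $G_i^0$ by the intersection of all its conjugates and takes $N_i=\gamma_{n-1}$ of that normal core before invoking Lemma \ref{l:nilp-sep}. Your write-up just spells out the routine verifications (normality, containment in $H\cap G_i$, virtual nilpotence of the quotient) in slightly more detail.
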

 
\begin{proof} We replace the subgroup $G^0_i<G_i$ of 
Proposition 3.2(1) with the intersection of all of its conjugates; it is then
normal in $G_i$, as is each term of its lower central series. Thus it suffices
to take $N_i = \gamma_{n-1}(G^0_i)$.
\end{proof}

 In our solution to the membership problem for finitely presented subgroups
of residually free groups, we shall need a further consequence of Lemma \ref{l:nilp-sep}.

\begin{corollary}\label{c:sdp-sep} If $G_1,\dots,G_n$ are limit groups, then every finitely presented full
subdirect product of $D=G_1\times\dots\times G_n$ is separable in $D$.
\end{corollary}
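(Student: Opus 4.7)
The strategy is to apply Lemma \ref{l:nilp-sep}: for each index $i$ I must exhibit a subgroup $N_i\subseteq S\cap G_i$ that is normal in $G_i$ and has virtually nilpotent quotient. For indices $i$ with $G_i$ abelian, I simply take $N_i = S\cap G_i$, which is trivially normal in $G_i$ and has abelian (hence nilpotent) quotient. All the substantive work concerns the non-abelian factors.

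For those, I would first regroup the direct product as $D \cong \G_0\times H$, with $\G_0$ the product of the abelian $G_i$ and $H = \G_1\times\dots\times\G_m$ the product of the non-abelian ones. Let $S_H$ denote the image of $S$ under the projection $D\twoheadrightarrow H$; then $S_H \cong S/(S\cap \G_0)$, and since $S\cap \G_0$ is a subgroup of the finitely generated abelian group $\G_0$ it is itself finitely generated, hence finitely generated as a normal subgroup of $S$, so $S_H$ is again finitely presented. The subgroup $S_H$ is a full subdirect product of $H$, and $H$ is a product of non-abelian limit groups; the embedding $S_H\hookrightarrow H$ is therefore neat (with trivial abelian part), and Theorem \ref{t:main}(5) yields VSP for $S_H$ in $H$. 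Pulling back along $D\to H$, the same VSP holds for $S$ among the non-abelian pairs of indices: $p_{ij}(S)$ has finite index in $G_i\times G_j$ whenever both $G_i$ and $G_j$ are non-abelian.

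With this non-abelian VSP in hand, I would adapt the commutator argument of Proposition \ref{p:nilp}(1). For non-abelian $G_i$, define $G_i^0$ as the intersection $\bigcap (p_{ij}(S)\cap G_i)$ over non-abelian $j\neq i$, taking $G_i^0 = G_i$ if $G_i$ is the only non-abelian factor; then $G_i^0$ has finite index in $G_i$. Given $x_1,\dots,x_l\in G_i^0$ with $l := \max(m-1,2)$, one can choose $s_r\in S$ having $x_r$ in the $i$-th coordinate and $1$ in a designated non-abelian coordinate distinct from $i$, arranging the choices so that every non-abelian coordinate other than $i$ is hit by at least one $s_r$. The left-normed iterated commutator $[s_1,\dots,s_l]$ then has $[x_1,\dots,x_l]$ in coordinate $i$, the identity in every other non-abelian coordinate (because at least one factor contributes a $1$ there, and a $1$ entry collapses a left-normed iterated commutator), and the identity in the $\G_0$ coordinate (because $\G_0$ is abelian and $l\geq 2$). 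Hence $\gamma_l(G_i^0)\subseteq S\cap G_i$, and after replacing $G_i^0$ by the intersection of its $G_i$-conjugates we obtain a normal-in-$G_i$ subgroup $N_i := \gamma_l(G_i^0)$ with virtually nilpotent quotient. Lemma \ref{l:nilp-sep} then delivers separability of $S$ in $D$.

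The main obstacle is that the VSP produced by Theorem \ref{t:main} only controls pairs of non-abelian factors, whereas Proposition \ref{p:nilp}(1) in its original form used VSP across all pairs. The key observation that unblocks the argument is that the abelian direct factor $\G_0$ contributes nothing to an iterated commutator of length at least $2$, so boosting the commutator length by one when an abelian factor is present is enough to recover the conclusion.
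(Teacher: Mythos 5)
Your proof is correct and follows essentially the same route as the paper: collect the abelian factors, project onto the non-abelian part to get a finitely presented full subdirect product with VSP, rerun the iterated-commutator argument of Proposition \ref{p:nilp}(1) noting that the abelian coordinate dies automatically, and finish with Lemma \ref{l:nilp-sep}. The only cosmetic difference is that you take $N_i=S\cap G_i$ on abelian factors where the paper takes $N_i=1$; both work.
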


\begin{proof} 
We proved in Theorem 4.2 of \cite{BHMS1} that finitely presented full subdirect products of non-abelian limit groups have the VSP property,
so it only remains to deal with the abelian factors. By collecting these factors, we may
assume that precisely one factor ($G_n$ say) is abelian.
If $S<D$ is a finitely presented, full subdirect product, then its projection $q_n(S)$
in $G_1\times\cdots\times G_{n-1}$ is also finitely
presentable, full and subdirect, and hence satisfies VSP.

The proof of Proposition \ref{p:nilp}(1) applies to $S<D$, even if it is
not VSP: given $a_1,\dots,a_{n-2}\in G_i^0$,
that proof gives $s_1,\dots,s_{n-2}\in S$ such that
$p_i([s_1,\dots,s_{n-2}])=[a_1,\dots,a_{n-2}]$ in $G_i$,
and $p_j([s_1,\dots,s_{n-2}])=1$ in $G_j$ for $1\le j\le n-1$, $j\ne i$.
But it is also true that $p_n([s_1,\dots,s_{n-2}])=1$ in $G_n$,
since $G_n$ is abelian.  Hence $\gamma_{n-2}(G_i^0)\subset \gamma_{n-2}(S)$.

As in the proof of Corollary \ref{c:VSPsep}
above, we arrange that $G_i^0$ is normal and define $N_i=\gamma_{n-2}(G_i^0)$.
(When $i=n$ this gives $N_n=1$.)  Lemma \ref{l:nilp-sep} now completes the proof.
\end{proof}

\subsection{The effective version}
The following theorem will play a key part in our proof that the class of finitely-presentable 
residually-free groups is recursively enumerable.

\begin{theorem}\label{t:effFPsubdirect}
There exists a Turing machine that, given a finite
collection $G_1,\dots,G_n$ of finitely presentable groups (each given by
an explicit finite presentation) and a finite
subset $Y\subset G_1\times\cdots\times G_n$ (given as a set
of $n$-tuples of words in the generators of the $G_i$) such that
each projection $p_{ij}(Y)$ generates a finite-index subgroup
of $G_i\times G_j$ ($1\le i<j\le n$), will output a finite
presentation $\<Y\mid R\>$ for $S:=\<Y\>$. 
\end{theorem}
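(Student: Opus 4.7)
The plan is to argue by induction on $n$, invoking the Effective Asymmetric 1-2-3 Theorem (Theorem \ref{t:123eff}) in the inductive step together with the structural results of Proposition \ref{p:nilp}. As a preliminary reduction, the VSP hypothesis implies that each $\langle p_i(Y)\rangle$ has finite index in $G_i$: Todd--Coxeter coset enumeration on $\langle p_i(Y)\rangle\subset G_i$ terminates (the index being finite), and Reidemeister--Schreier then produces an explicit finite presentation of $\langle p_i(Y)\rangle$. Replacing each $G_i$ by $\langle p_i(Y)\rangle$, we may assume that $S=\langle Y\rangle$ is a subdirect product. The base case $n=2$ follows from the same procedure: VSP says $\langle Y\rangle$ has finite index in $G_1\times G_2$, so Todd--Coxeter plus Reidemeister--Schreier, followed by Tietze transformations to switch to the preferred generators $Y$, yields the desired finite presentation.

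For the inductive step, let $q:G_1\times\dots\times G_n\to G_1\times\dots\times G_{n-1}$ be projection and set $T:=\langle q(Y)\rangle=q(S)$. Since $q(Y)$ inherits VSP, induction produces a finite presentation $\mathcal{T}\equiv\langle q(Y)\mid R\rangle$ of $T$. Now $S$ sits inside $T\times G_n$ via $s\mapsto(q(s),p_n(s))$ as the fibre product of surjections $f_1:T\to Q$ and $f_2:G_n\to Q$, where $Q:=G_n/L_n$ and $L_n:=S\cap G_n$. For each $r\in R$ the word $r(Y)$ lies in $\ker(q|_S)=L_n$, and a standard lift-of-relators argument (using that $S$-conjugation on $L_n$ factors through the surjection $p_n:S\to G_n$) shows that $\{r(Y):r\in R\}$ normally generates $L_n$ in $G_n$. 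Adjoining these as relators to the given presentation of $G_n$ therefore yields an explicit finite presentation $\mathcal{Q}$ of $Q$; the map $f_1$ is given on generators by $q(y)\mapsto p_n(y)L_n$, while $f_2$ is the canonical quotient. By Proposition \ref{p:nilp}(3) the kernel $N_n$ of $f_1$ is finitely generated as a group, and by Proposition \ref{p:nilp}(1) the quotient $Q$ is virtually nilpotent, hence of type $F_3$.

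To invoke Theorem \ref{t:123eff} we still need a finite set of identity sequences generating $\pi_2\mathcal{Q}$ as a $\Z Q$-module. The virtual nilpotence of $Q$ is essential here (cf.~Remark \ref{r:needPi2}): standard algorithms from polycyclic-by-finite group theory allow us to locate a finite-index nilpotent subgroup $N\le Q$, produce a consistent polycyclic presentation of $N$ with an explicit finite aspherical classifying 2-complex, and transfer this data to $\mathcal{Q}$ via the finite quotient $Q/N$ to obtain the required $\pi_2$-generators. With all input data in hand, Theorem \ref{t:123eff} outputs a finite presentation of the fibre product, which is $S$; Remark \ref{r:123map} then allows us to rewrite this in the original generating set $Y$.

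\textbf{Main obstacle.} The delicate point is the effective computation of $\pi_2$-module generators for $\mathcal{Q}$: Remark \ref{r:needPi2} shows that no such procedure can exist for a general finitely-presented group (even of type $F_\infty$). Here it is the virtual nilpotence supplied by Proposition \ref{p:nilp}(1) that rescues us, via the algorithmic theory of polycyclic-by-finite groups; everything else is routine coset enumeration, lifting of relators, and bookkeeping through Tietze transformations.
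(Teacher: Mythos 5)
Your proposal is correct and follows essentially the same route as the paper: reduce to the subdirect case via Todd--Coxeter and Reidemeister--Schreier, induct on $n$ with base case $n=2$ handled by coset enumeration, present $Q=G_n/(S\cap G_n)$ by adjoining the relators of $T=q(S)$ to those of $G_n$, and feed the data into Theorem \ref{t:123eff}, extracting the $\pi_2$-generators from the virtually polycyclic structure of $Q$ guaranteed by Proposition \ref{p:nilp}. The one step you leave as a black box --- ``transferring'' the $\pi_2$-data from a finite-index poly-$\Z$ subgroup $Q'\le Q$ up to $Q$ --- is exactly where the paper works hardest (it builds a presentation of $Q$ containing the poly-$\Z$ presentation as a sub-presentation, passes to the cover $\widehat K$ of the presentation complex corresponding to $Q'$, and uses the exact sequence $\Z Q\otimes_{\Z Q'}\pi_2(X^{(2)})\to\pi_2(\widehat K)\to\pi_2(\widehat K,Z)\to 0$ together with $\pi_2(\widehat K,Z)\cong H_2(\widehat K/Z)$ to reduce the remaining generators to a naive search over identity sequences); note also that a poly-$\Z$ group of Hirsch length $\ge 3$ has no aspherical 2-complex as classifying space --- one takes the 2-skeleton of a finite $K(P,1)$, whose $\pi_2$ is generated by the attaching maps of the 3-cells.
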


\begin{proof}
With the effective Asymmetric 1-2-3 Theorem (Theorem \ref{t:123eff})
in hand, we
follow the proof of Theorem \ref{t:pairs}.  As in Theorem \ref{t:pairs}
we first replace each $G_i$ by the finite-index subgroup $p_i(S)$ to get to a
situation where $S$ is subdirect.  
Here we use the Todd-Coxeter and Reidemeister-Schreier
processes to replace  the given presentations of the $G_i$
by presentations of the appropriate finite-index subgroups.
By using Tietze transformations we may 
take $p_i(Y)$ to be the generators
of this presentation.
Thus we express the revised 
$G_i$ as quotients of the free group on $Y$.

We argue by induction on $n$. The initial  
case $n=2$ is easily handled by the Todd-Coxeter
and Reidemeister-Schreier processes, since then
$S$ has finite index in the direct product.  So we may assume that $n\ge 3$.

By Theorem \ref{t:123eff} and Remark \ref{r:123map} it is sufficient to find finite presentations
for
\begin{enumerate}
\item $T=q(S)$, where $q$ is the natural projection from
$G_1\times\cdots\times G_n$ to $G_1\times\cdots\times G_{n-1}$, 
\item $G_n$, and
\item $Q=G_n/(G_n\cap S)$,
\end{enumerate}
together with
\begin{enumerate}
\item[(4)] explicit epimorphisms $T\to Q$ and $G_n\to Q$, and
\item[(5)] a finite set of generators for $\pi_2$ of the presentation
for $Q$, as a $\Z Q$-module.
\end{enumerate}

A finite presentation $G_n=\langle Y\mid R_n\rangle$ is part of the input.

\medskip
We may assume inductively that we have found a finite presentation for
$T$, with generators  $q(Y)$.
We write this presentation as $\<Y\mid r_1(Y),\dots, r_m(Y)\>$.
\medskip

To obtain a finite presentation for $Q$, we proceed as follows.
The image in $G_n$ of the words $r_j(Y)$ normally generate $G_n\cap S$.
Thus adding these words as relations to the existing presentation of $G_n$
gives a finite presentation of $Q$, together with the natural
quotient map $G_n\to Q$.

\medskip
The epimorphism $T\to Q$ is induced by the identity map 
on $Y$. 

\medskip
We would now be done if we could
 compute a finite set of $\pi_2$-generators for our
chosen finite presentation $\mathcal{P}$
of the virtually nilpotent group $Q$.
But it is more convenient to
proceed in a slightly different manner, modifying $\mathcal{P}$.

First, we search among finite-index normal subgroups $Q'$ of $Q$ for an
isomorphism $Q'\to P$, for some group $P$ given by a poly-$\Z$ presentation
$\mathcal{P}'$.
The latter presentation defines an explicit construction for a finite
$K(P,1)$-complex $X$, and in particular a finite set $B$ of generators of $\pi_2(X^{(2)})$ as a $\Z P$-module
(the attaching maps of the $3$-cells).

We next replace our initial presentation $\mathcal{P}$ for $Q$ by a
new presentation $\mathcal{Q}$ that contains $\P'$ as a sub-presentation.
Indeed, we know that such presentations exist, so we can find one, together
with an explicit isomorphism that extends the given isomorphism
$P\to Q'$, by a naive search procedure.

Let $K$ denote the presentation $2$-complex associated to the presentation
$\mathcal{Q}$, $\widehat{K}$ the regular cover of $K$ corresponding to
the normal subgroup $P=Q'$, and $Z$ the preimage of $X^{(2)}\subset K$
in $\widehat{K}$.  Then $Z$ consists of one copy of $X^{(2)}$ at each vertex of
$\widehat{K}$; these are
 indexed by the elements of the finite quotient group
$H=Q/Q'$.

We then have an exact homotopy sequence
$$\cdots \to \Z Q \otimes_{\Z Q'} \pi_2(X^{(2)}) \to \pi_2(\widehat{K}) \to \pi_2(\widehat{K},Z) \to 0$$
(since the map $P\to Q$ is injective by hypothesis), together with a finite
set $B$ of generators for $\pi_2(X^{(2)})$ as a $\Z Q'$-module (via our isomorphism
$Q'\to P$).
But $\pi_2(\widehat{K},Z)\cong H_2(\widehat{K}/Z)$, since the quotient complex
$\widehat{K}/Z$ is simply connected.  Hence $\pi_2(K)=\pi_2(\widehat{K})$
is generated as a $\Z Q$-module by $B$ together with any finite set $C$
that maps onto a generating set for the finitely generated abelian
group $H_2(\widehat{K}/Z)$.  
Such a set $C$ can be found by a naive search
over finite sets of identity sequences over  $\mathcal{Q}$.
\end{proof}

\begin{addendum}\label{a:noHalt} The Turing machine in Theorem \ref{t:effFPsubdirect} will not halt
on input of a subset $Y$ such that $p_{ij}(Y)$ generates a subgroup of infinite index in $G_i\times G_j$
$i\neq j$.
\end{addendum}

\begin{proof} The Todd-Coxeter process invoked in the preceding proof will not halt.
\end{proof}

\section{Novel Examples}\label{s:exs}

From \cite{BHMS1} (or \cite{BM} in the case of surface groups)
we know that a finitely presented full subdirect product $S$ of $n$ limit
groups $\G_i$  must virtually contain the term 
$\g_{n-1}$ of the lower central series of the product.  
So the quotient groups $\G_i/(S\cap\G_i)$ are 
virtually nilpotent of class at most  $n-2$.  
In particular for $n=3$ the quotients $\G_i/(S\cap\G_i)$
are virtually abelian.

A question left unresolved in \cite{BM} is whether a finitely presented subdirect product $S$
of $n$ free groups $\Phi_i$ can have $\Phi_i/(S\cap \Phi_i)$ nilpotent strictly of class 2 or more (necessarily
$n\geq 4$ for this to happen).  Theorem \ref{t:sec} below settles this question
and shows that the bounds on the nilpotency class given in 
\cite{BHMS1} and \cite{BM} are optimal.

\subsection{The groups $S(E,c)$}
The first part of our discussion applies to groups of a rather
general nature, but since our main interest lies with subgroups
of direct products of free groups, we fix the suggestive notation
$F$ for a finitely presentable group with a fixed generating set $\{a,b\}$.
(The restriction to the 2-generator case is just for notational convenience.)
Let $\Phi=F^\Z$
denote the unrestricted direct product of a countably infinite
collection of copies of $F$, thought of as the set of functions
$f:\Z\to F$ endowed with pointwise multiplication.

Let $\G=\<w,x,y,z\>$ be a free group of rank $4$, and define a
homomorphism $\phi:\G\to\Phi$ by $\phi(w)(n)=a$, $\phi(x)(n)=b$,
$\phi(y)(n)=a^n$, $\phi(z)(n)=b^n$ for all $n\in\Z$.

Given a finite subset $E\subset\Z$, we may regard the direct product
of $|E|$ copies of $F$ as the set $F^E$ of functions $E\to F$. We then obtain
a projection $p_E:\Phi\to F^E$ by restriction: $p_E(f)=f|_E:E\to F$.

Notice that when $E=\{n\}$ is a singleton  $p_E\circ\phi$ is surjective.
It will be convenient to write $\Phi_n$ for $F^{\{n\}}$, $p_n$ for the projection
$p_{\{n\}}:\Phi\to \Phi_n$, and $a_n,b_n$ for the
copy of $a,b$ respectively in $\Phi_n$.  The surjectivity of $p_n\circ\phi$
means that, for any finite subset $E\subset\Z$, the image of $p_E\circ\phi$ is a finitely
generated subdirect product of the groups $\Phi_n$ ($n\in E$).

This subdirect product is not in general finitely presented.

Now let $c$ be a positive integer.
We may choose a finite set $R=R(a,b)$ of normal generators for the $c$'th term
$\gamma_c(F)$ of the lower central series of $F$.  We then define $S(E,c)$ to
be the subgroup of $F^E$ that is generated by $(p_E\circ\phi)(\G)$
together with the sets $R(a_n,b_n)\subset \Phi_n$ for each $n\in E$.

As a concrete example we note that  $S(\{1,2,3,4\},3)$ is
the subgroup of $ \Phi_1\times  \Phi_2\times  \Phi_3\times  \Phi_4$ generated by the following
$12$ elements:  the four images of the generators of $\G$
$$(a_1,a_2,a_3,a_4) \thickspace,\thickspace  (b_1,b_2,b_3,b_4) \, $$
$$ (a_1, a_2^2,a_3^3,a_4^4) \thickspace,\thickspace (b_1,b_2^2,b_3^3,b_4^4)$$
together with the eight elements 
$$ ([[a_1,b_1],a_1],1,1,1) \thickspace,\thickspace ([[a_1,b_1],b_1],1,1,1) \thickspace,\thickspace
(1,[[a_2,b_2],a_2],1,1) \thickspace,\thickspace \ldots $$
$$ \ldots  \thickspace,\thickspace
(1,1,1,[[a_4,b_4],a_4]) \thickspace,\thickspace (1,1,1,[[a_4,b_4],b_4]) $$
which are normal generators for the subgroups $\gamma_3( \Phi_i)$ for $ 1\leq i\leq 4 $.

\begin{prop}\label{p:sec}
The groups $S(E,c)$ have the following properties.
\begin{enumerate}
 \item\label{p1} $S(E,c)$ contains $\gamma_c(F^E)$.
 \item\label{p2} $S(E,c)$ is finitely presentable.
 \item\label{p3} If $E'=E+t=\{e+t;~e\in E\}$ is a translate of $E$ in $\Z$, then
$$S(-E,c)\cong S(E,c)\cong S(E',c).$$
 \item\label{p4} If $E\subset E'$, then the projection $F^{E'}\to F^E$ induces an
epimorphism $S(E',c)\to S(E,c)$.
\end{enumerate}
\end{prop}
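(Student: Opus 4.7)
For (3) and (4), my approach is direct verification on the given generators. For (4), the coordinate projection $F^{E'}\to F^{E}$ sends each $\phi(\cdot)|_{E'}$ to $\phi(\cdot)|_E$ and sends each $R(a_n,b_n)$ with $n\in E'$ either to itself (if $n\in E$) or to $1$ (if $n\in E'\smallsetminus E$), so its restriction to $S(E',c)$ surjects onto $S(E,c)$. For (3), the bijections $E\leftrightarrow E+t$ and $E\leftrightarrow -E$ induce relabeling isomorphisms of the ambient direct products; one checks that constants $\phi(w),\phi(x)$ are preserved, $\phi(y)|_E$ maps to $\phi(y)|_{E+t}\cdot\phi(w)^{-t}|_{E+t}$ (under translation) or to $\phi(y)|_{-E}^{-1}$ (under negation), and $R(a_n,b_n)\leftrightarrow R(a_{n+t},b_{n+t})$ (respectively $R(a_{-n},b_{-n})$), so the images of the generating sets coincide.

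For (1), the key identity is $\gamma_c(F^E)=\prod_{n\in E}\gamma_c(\Phi_n)$, which holds because commutators in a direct product distribute coordinate-wise. It therefore suffices to show $\gamma_c(\Phi_n)\subseteq S(E,c)$ for each $n\in E$. The composition $p_n\circ\phi$ is surjective because it sends $w,x$ to $a_n,b_n$, which generate $\Phi_n\cong F$; hence $p_n(S(E,c))=\Phi_n$. Each $r\in R(a_n,b_n)\subset F^E$ is trivial off coordinate $n$, so for any $s\in S(E,c)$ the conjugate $srs^{-1}$ is also supported on coordinate $n$, with $n$-th component $p_n(s)\,r\,p_n(s)^{-1}$. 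As $p_n(s)$ ranges over $F$, this produces the entire $F$-normal closure of $R(a_n,b_n)$, which is $\gamma_c(\Phi_n)$ by the choice of $R$.

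For (2), my plan is to invoke the VSP Criterion (Theorem~\ref{t:pairs}); since each $\Phi_n$ is finitely presentable, only the VSP property for $S(E,c)<F^E$ needs checking. For a pair $i\neq j$ in $E$, the argument used in (1) applied to $\{i,j\}$ gives $\gamma_c(\Phi_i)\times\gamma_c(\Phi_j)\subseteq p_{ij}(S(E,c))$, reducing the claim to finite index of the image in the nilpotent quotient $N\times N$, where $N:=F/\gamma_c(F)$. The image there contains the diagonal copy $\Delta(N)$ together with $(1,\bar a^{j-i})=(\bar a,\bar a)^{-i}(\bar a^i,\bar a^j)$ and the analogous element in $\bar b$. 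Passing further to the abelianization $\ab{F}\times\ab{F}$, the cokernel of the image is a quotient of $\ab{F}/(j-i)\ab{F}$, which is finite because $\ab{F}$ is finitely generated abelian and $j-i\neq 0$. The standard lemma that a subgroup of a finitely generated nilpotent group has finite index whenever its image in the abelianization does then lifts this to finite index in $N\times N$, and hence in $\Phi_i\times\Phi_j$.

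The principal obstacle is (2): the generator-level bookkeeping and the determinant-style computation in $\ab{F}\times\ab{F}$ are elementary, but the passage from finite index in the abelianization to finite index in the nilpotent quotient $N\times N$ relies on the standard but non-trivial lemma on finitely generated nilpotent groups, which I would either cite or establish by induction on nilpotency class. A secondary subtlety is the need for $F$ to be finitely presentable (so that Theorem~\ref{t:pairs} applies) and for the argument in the abelianization to be phrased so as not to require $F$ to be free.
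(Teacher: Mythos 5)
Your proof is correct and follows the same overall architecture as the paper's: parts (1), (3) and (4) are handled by essentially identical generator-level verifications (the paper packages the translation case as an automorphism $\widehat\theta$ of $\G$ with $\overline\theta\circ\phi=\phi\circ\widehat\theta$, which is just a tidier form of your computation $\phi(y)|_E\mapsto\phi(yw^{-t})|_{E+t}$), and part (2) is in both cases a reduction to the VSP criterion of Theorem~\ref{t:pairs} by checking that each pairwise projection $p_{ij}(S(E,c))=S(\{i,j\},c)$ has finite index. The only divergence is in how that finite-index claim is closed out: the paper observes that $\Phi_n/(S(\{i,j\},c)\cap\Phi_n)$ is a finitely generated nilpotent group generated by two torsion elements (the images of $a_n,b_n$, killed to the power $j-i$), hence finite, so that the product of the two intersections is already a finite-index subgroup of $\Phi_i\times\Phi_j$ contained in the projection; you instead work in $N\times N$ with $N=F/\gamma_c(F)$ and invoke the abelianization-to-nilpotent finite-index lemma (which is exactly Lemma~\ref{l:nilp_subgroup} of the paper, proved there for use in Theorem~\ref{t:cchar}). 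Both closing steps are standard facts about finitely generated nilpotent groups and both are valid; the paper's is marginally more self-contained at this point in the text, while yours has the small advantage of making the role of the elements $(1,\bar a^{\,j-i})$, $(1,\bar b^{\,j-i})$ completely explicit. Your concern about not assuming $F$ free is well placed and your argument indeed only uses that $\ab{F}$ is finitely generated, matching the paper's level of generality in this section.
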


\begin{pf}

(\ref{p1}) Since
$R(a_n,b_n)\subset S(E,c)\cap \Phi_n$ by construction,
and since $p_n\circ\phi$ is surjective for all $n\in E$, it follows
that $S(E,c)\supset\gamma_c(\Phi_n)$ for each $n\in E$, and hence
$S(E,c)\supset\gamma_c(F^E)$.

(\ref{p2})
Clearly $S(E,c)$ is finitely generated.
  For any $2$-element subset $T=\{m,n\}$
of $E$, the image of the projection of $S(E,c)$ to $F^T=\Phi_m\times \Phi_n$ is precisely
$S(T,c)$.  Since $S(T,c)$ contains the elements $p_T(\phi(w))=(a_m,a_n)$,
$p_T(\phi(x))=(b_m,b_n)$, $p_T(\phi(yw^{-m}))=(1,a_n^{n-m})$ and
$p_T(\phi(zx^{-m}))=(1,b_n^{n-m})$, together with $\gamma_c(\Phi_m\times \Phi_n)$,
we see that the quotient of each of the direct factors $\Phi_m\cong F\cong \Phi_n$
by its intersection with $S(T,c)$ is a nilpotent group of class at most $c$, generated
by two elements of finite order, and hence is finite.  Thus $S(T,c)$ has finite
index in $F^T$.  In other words, the projection of the subdirect product
$S(E,c)<F^E$ to each product of two factors $F^T$ has finite index.  Hence
by Theorem \ref{t:pairs}, $S(E,c)$ is finitely presentable.

(\ref{p3})
It is clear that $S(-E,c)\cong S(E,c)$ via the isomorphism $F^E\to F^{-E}$
defined by $a_n\mapsto a_{-n}$, $b_n\mapsto b_{-n}$.

To show that $S(E,c)\cong S(E',c)$,
it is clearly enough to consider the case $t=1$.  The isomorphism 
$\theta:F^E\to F^{E'}$ defined by $a_n\mapsto a_{n+1}$, $b_n\mapsto b_{n+1}$
is induced by the shift automorphism $\overline\theta:\Phi\to\Phi$ defined by
$\overline\theta(f)(k):=f(k-1)$, in the sense that
$p_{E'}\circ\overline\theta=\theta\circ p_E$.

Similarly, $\overline\theta$ commutes with the automorphism $\widehat\theta$
of $\G$ defined by $w\mapsto w$, $x\mapsto x$, $y\mapsto yw^{-1}$, $z\mapsto zx^{-1}$,
in the sense that $\overline\theta\circ\phi=\phi\circ\widehat\theta$.
It follows immediately from the definitions that $\theta$ maps $S(E,c)$ onto $S(E',c)$.

(\ref{p4}) This is immediate from the definitions.
\end{pf}

We can now state and prove the main result of this section.
We thank Mike Vaughan-Lee for several helpful suggestions
concerning this proof.

Henceforth we assume that $F=\<a,b\>$ is a free group of rank 2.
(As above, this is purely for notatonal convenience; analogues of our examples
can be constructed in the same way using non-abelian free groups of arbitrary rank.) 

\begin{theorem}[= Theorem \ref{t:exs}]\label{t:sec}
For any positive integer $c$, and any finite subset $E\subset\Z$ of cardinality
at least $c+1$, the group
$S(E,c)$ is a finitely presentable subdirect product of the non-abelian free groups
$\Phi_n$ ($n\in E$) and $S(E,c)\cap \Phi_n=\gamma_c(\Phi_n)$ for each $n\in E$.
\end{theorem}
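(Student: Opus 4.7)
The plan is to notice that most of the conclusions of Theorem~\ref{t:sec} have already been established. Proposition~\ref{p:sec}(2) gives finite presentability, and $S(E,c)$ is a subdirect product of the groups $\Phi_n$ because $p_n\circ\phi$ is surjective for each $n\in E$. Moreover, Proposition~\ref{p:sec}(1) gives $\g_c(F^E)\subset S(E,c)$, so in particular $\g_c(\Phi_n)\subset S(E,c)\cap\Phi_n$. The only substantive remaining task is the reverse inclusion $S(E,c)\cap\Phi_n\subset\g_c(\Phi_n)$, and as signalled in the introduction I would approach this through the Magnus embedding.

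Write $A=\Q\<\<\a,\b\>\>$, $I=(\a,\b)$, and $A_c=A/I^c$; the Magnus map $\mu\colon F\to A_c^*$ sending $a\mapsto 1+\a$ and $b\mapsto 1+\b$ induces an embedding $F/\g_c(F)\hookrightarrow 1+I/I^c$. I would then introduce a ``generic'' companion $\psi\colon\G\to A_c[t]^*$ on the free group $\G=\<w,x,y,z\>$ by setting
\[
\psi(w)=1+\a,\ \ \psi(x)=1+\b,\ \ \psi(y)=\sum_{k=0}^{c-1}\binom{t}{k}\a^k,\ \ \psi(z)=\sum_{k=0}^{c-1}\binom{t}{k}\b^k.
\]
By construction, the evaluation $\psi(\g)|_{t=m}\in A_c$ equals $\mu(\phi(\g)_m)$ for every integer $m$, so evaluation at $t=m$ recovers the $m$-th Magnus expansion. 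The essential technical claim is: for every $\g\in\G$, when $\psi(\g)-1$ is written in the $\a,\b$-monomial basis with coefficients in $\Q[t]$, the coefficient of any monomial of degree $d$ is a polynomial in $t$ of degree at most $d$. This holds for the four generators by inspection (using that $\binom{t}{k}$ has $t$-degree $k$), and it is preserved under multiplication because the degree-$d$ homogeneous part of a product splits as a sum of products of pieces of degrees $d_1+d_2=d$ whose $t$-degrees add; for inverses one uses the finite expansion $\psi(\g)^{-1}=\sum_{k=0}^{c-1}(1-\psi(\g))^k$, valid because $I^c=0$ in $A_c$, to which the same bookkeeping applies.

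With this in hand, suppose $g=(g_m)_{m\in E}\in S(E,c)\cap\Phi_n$, so that $g_m=1$ for $m\ne n$. Since $\g_c(F^E)=\prod_m\g_c(\Phi_m)$ lies in $S(E,c)$ and the quotient $S(E,c)/\g_c(F^E)$ is generated by the image of $\G$ under $\phi$, one may write $g=\phi(\g)\cdot h$ with $\g\in\G$ and $h\in\g_c(F^E)$. Then $\mu(\phi(\g)_m)=1$ in $A_c$ for every $m\in E\ssm\{n\}$, so the polynomial $\psi(\g)-1\in A_c[t]$ vanishes at $|E|-1\ge c$ distinct integer values of $t$; since each of its coefficients has $t$-degree at most $c-1$, it is identically zero. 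Evaluating at $t=n$ yields $\mu(\phi(\g)_n)=1$, whence $\phi(\g)_n\in\g_c(\Phi_n)$ and finally $g_n=\phi(\g)_n\cdot h_n\in\g_c(\Phi_n)$, as required.

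The main obstacle is establishing the degree bound on the coefficients of $\psi(\g)-1$ uniformly over $\g\in\G$; the delicate point is that one must track the $I$-adic filtration and the polynomial degree in $t$ simultaneously under inversion, and it is precisely the truncation $I^c=0$ in $A_c$ that keeps the inverse expansion finite and prevents the $t$-degrees from exceeding~$c-1$. Everything else is essentially bookkeeping once the generic companion $\psi$ is in place.
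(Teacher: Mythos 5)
Your proposal is correct and follows essentially the same route as the paper: finite presentability and the inclusion $\gamma_c(\Phi_n)\subset S(E,c)\cap\Phi_n$ come from Proposition~\ref{p:sec}, and the reverse inclusion is proved via the Magnus embedding together with a generic parameter $t$, using the bound that the coefficient of a degree-$d$ monomial is a polynomial in $t$ of degree at most $d$, which forces all low-degree coefficients to vanish identically once they have at least $c$ integer roots. Working in the truncation $A/I^c$ rather than the full power series algebra, and spelling out the decomposition $g=\phi(\gamma)h$ with $h\in\gamma_c(F^E)$, are only minor (and welcome) variations on the paper's argument.
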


\begin{pf}
By construction, $S(E,c)$ is a subdirect product of the $\Phi_n$ for $n\in E$,
and by Proposition \ref{p:sec}(\ref{p2}) it is finitely presentable.
By Proposition \ref{p:sec}(\ref{p1}) we have $$S(E,c)\cap \Phi_n\supset\gamma_c(\Phi_n)$$
for each $n\in E$, so it only remains to prove the reverse inclusion.

Let $A=\Q[[\a,\b]]$ be the algebra of power series in two non-commuting
variables $\a,\b$ with rational coefficients, and for each $n$ let $\eta_n:\Phi_n\to U(A)$ be the
Magnus embedding of $\Phi_n$ into the group of units $U(A)$ of $A$, defined by
$\eta_n(a_n)=1+\a$, $\eta_n(b_n)=1+\b$.  By Magnus' Theorem \cite{Mag} (or \cite[Chapter 5]{MKS}),
$\eta_n^{-1}(1+J^c)=\gamma_c(\Phi_n)$.  Here $J$ is the ideal generated by the 
elements with 0 constant term and $J^c$ is its $c$-th power.

Now define $\eta:\Gamma\to U(\Q[t]\otimes_\Q A)$ by $\eta(w)=1+\a$, $\eta(x)=1+\b$,
$\eta(y)=(1+\a)^t$, $\eta(z)=(1+\b)^t$, where for example $(1+\a)^t$ means the power series
$$(1+\a)^t=\sum_{k=0}^\infty \left(\begin{array}{c} t\\ k\end{array}\right) \a^k
=\sum_{k=0}^\infty\frac{t(t-1)\cdots (t-k+1)}{k!}~\a^k.$$

Note that $\eta_n\circ\phi_n=\psi_n\circ\eta$,
where $\psi_n:\Q[t]\otimes_\Q A\to A$ is defined by $f(t)\otimes a\mapsto f(n)a$
and where $\phi_n = p_n\circ \phi$.

Note also that, for any $g\in\Gamma$, $\eta(g)$ has
the form $$\eta(g)=\sum_{W\in\Omega} \pi_W(t)\cdot W(\a,\b),$$ where
$\Omega$ is the free monoid on $\{\a,\b\}$ and $\pi_W(t)\in\Q[t]$
has degree at most equal to the length of $W$.
Hence, for each $n\in\Z$, we have
$$\eta_n(\phi_n(g))=\psi_n(\eta(g))=\sum_{W\in\Omega} p_W(n)\cdot W(\a,\b).$$

Suppose now that $E\subset\Z$ is a finite set of integers of cardinality
at least $c+1$, and that 
$g\in\G$ such that $p_E(\phi(g))\in S(E,c)\cap \Phi_n$ for some $n\in E$.
Then, for each $m\in E\smallsetminus\{n\}$, we have 
$$\psi_m(\eta(g))=\eta_m(\phi_m(g))=\eta_m(1)=1.$$

It follows that, in the expression $\eta(g)=\sum_{W\in\Omega} \pi_W(t)\cdot W(\a,\b)$
for $\eta(g)$, the elements of $E\smallsetminus\{n\}$, of which there are at least $c$, are roots of
all the polynomials $\pi_W(t)$.  In particular, for words $W$ of length less than
$c$, the polynomials $\pi_W$ are identically zero.  Hence $\psi_m(\eta(g))\in 1+J^c$
for all $m\in\Z$, in particular for $m=n$.
Hence $\phi_n(g)\in\eta_n^{-1}(1+J^c)=\gamma_c(\Phi_n)$.

Thus $$S(E,c)\cap \Phi_n\subset\gamma_c(\Phi_n),$$ completing the proof
that $$S(E,c)\cap \Phi_n=\gamma_c(\Phi_n).$$
\end{pf}

\subsection{Sample calculations}

We use the explicit form of the 
map $\eta:\Gamma\to U(\Q[t]\otimes_\Q A)$ from the proof of Theorem \ref{t:sec} 
to make some calculations that illuminate the preceding proof. Recall that $\G=\<w,x,y,z\>$
is free of rank 4. 

\begin{remark}\label{rk:comms}
Suppose that $U,V\in\G$, $k,\ell\ge 1$ and $\a\in J^k$, $\b\in J^\ell$ are such that
$\eta(U)=1+\a~\mathrm{mod}~J^{k+1}$, $\eta(V)=1+\b~\mathrm{mod}~J^{\ell+1}$.
Then $\eta(UV)-\eta(VU) = \a\b-\b\a~\mathrm{mod}~J^{k+\ell+1}$,
while $\eta(U^{-1}V^{-1})=1~\mathrm{mod}~J$, so
$$\eta([U,V])-1=\eta(U^{-1}V^{-1})(\eta(UV)-\eta(VU))=\a\b-\b\a~\mathrm{mod}~J^{k+\ell+1} \ .$$
\end{remark}

\begin{example}
For each integer $k$, we calculate that
$$\eta(zx^{-k}) = 1 + (t-k)\b ~\mathrm{mod}~J^2.$$
Also
$$\eta(y) = 1 + t\a ~\mathrm{mod}~J^2.$$
Repeatedly applying Remark \ref{rk:comms}, we see that
$$\eta([y,zx^{-1},zx^{-2},\dots,zx^{-m}])=1 + t(t-1)\cdots (t-m)V_m(\a,\b)~\mathrm{mod}~J^{m+2},$$
where $$V_m:=\sum_{k=1}^m \ch{m}{k}\b^k\a\b^{m-k}$$
is a non-trivial $\Z$-linear combination of homogeneous monomials of degree $m+1$.

Notice that the coefficient of $V_m(\a,\b)$ is a polynomial of degree $m+1$ in $t$
with roots $0,1,\ldots,m$.  In particular this gives an example of an element in
$S(\{0,\ldots,m+1\}, m+2) \cap \gamma_{m+1}( \Phi_{m+1})$ 
which is not in $\gamma_{m+2}(\Phi_{m+1})$.
\end{example}

\begin{example}
As another application of Remark \ref{rk:comms}, we see inductively
that, for any basic commutator $C$ of weight $c$ in the generators of $\G$,
$$\eta(C)\in \Z[t][[\a,\b]] + J^{c+1},$$
and hence
$$\eta(\gamma_c(\G))\subset \Z[t][[\a,\b]] + J^{c+1}.$$

On the other hand, if we put $U=[w,z][x,y]\in\gamma_2(\G)$, then
$$\eta(U)=1+\ch{t}{2}(\a\b^2+\b^2\a+\b\a^2+\a^2\b-2\a\b\a-2\b\a\b)~\mathrm{mod}~J^4.$$
Thus $\phi(U)$ is an element of $\gamma_3(S(E,c))$ for any $E,c$.  On the other hand,
since $\ch{t}{2}\notin\Z[t]$, $\eta(U)\notin\eta(\gamma_3(\G))$, so for sufficiently
large $E,c$ the element $\phi(U)\in\gamma_3(S(E,c))$ does not belong to
$\phi(\gamma_3(\G))$.
\end{example}

\section{Characterizations}\label{s:char}

In this section we discuss the structure of finitely presentable
residually free groups, and prove some results concerning their
classification.

\subsection{Subdirect products and homological finiteness properties}

We remind the reader of the shorthand we introduced in order to state Theorem \ref{t:main}
concisely: an embedding $S\hookrightarrow
\G_0\times\dots\times\G_n$ of a residually free group $S$
as a full subdirect product of  limit groups is said to be {\em{neat}}
if $\G_0$ is
abelian, $S\cap\Gamma_0$ is of finite index in $\G_0$, and
$\G_i$  is non-abelian for $i=1,\dots,n$.

\begin{thm}[=Theorem \ref{t:main}]\label{t:5.1} Let $S$ be a  finitely generated
residually free group.  Then the  following conditions are equivalent:
\begin{enumerate}
\item $S$ is finitely presentable;
\item $S$ is of type $\mathrm{FP}_2(\Q)$;
\item ${\rm{dim}\,}H_2(S_0;\Q)<\infty$ for all subgroups $S_0\subset S$ of
finite index;
\item  there exists a neat embedding of $S$ as a full subdirect of finitely many limit groups
so that the image is VSP;
\item  the image of every neat embedding of $S$ as a full subdirect of finitely many limit groups
is VSP.
\end{enumerate} 
\end{thm}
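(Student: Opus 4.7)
The plan is to prove the cycle $(1) \Rightarrow (2) \Rightarrow (3) \Rightarrow (5) \Rightarrow (4) \Rightarrow (1)$. The implications $(1) \Rightarrow (2) \Rightarrow (3)$ are standard: finite presentability yields type $\mathrm{FP}_2(\Q)$, which in turn is inherited by subgroups of finite index (via Reidemeister--Schreier rewriting) and forces $\dim H_2(-;\Q)<\infty$. The implication $(4) \Rightarrow (1)$ is an immediate application of the VSP Criterion, Theorem \ref{t:pairs}: a neat embedding with VSP image places $S$ as a VSP subgroup of a direct product of finitely presented groups (limit groups being finitely presented), hence $S$ itself is finitely presentable.

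For $(5) \Rightarrow (4)$ one must exhibit at least one neat embedding. Since $S$ is finitely generated and residually free, the Baumslag--Myasnikov--Remeslennikov theorem embeds $S$ into a finite direct product of limit groups, and replacing each factor by the image of $S$ under the corresponding projection makes the embedding subdirect. Collecting all abelian factors into a single abelian limit group $\G_0$, and if necessary enlarging $\G_0$ so that $S \cap \G_0$ has finite index (or taking $\G_0 = 1$ when the abelian contribution is trivial), produces a neat embedding to which (5) applies, yielding (4).

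The heart of the proof is $(3) \Rightarrow (5)$. Fix a neat embedding $S \hookrightarrow \G_0 \times \G_1 \times \cdots \times \G_n$ and let $\pi \colon S \to \G_1 \times \cdots \times \G_n$ denote projection onto the non-abelian part. Its kernel $S \cap \G_0$ is a finite-index subgroup of the finitely generated free abelian group $\G_0$, so for any finite-index $S_0 \subset S$ the five-term exact sequence attached to the extension $1 \to S_0 \cap \G_0 \to S_0 \to \pi(S_0) \to 1$ forces $\dim H_2(\pi(S_0);\Q) < \infty$ as a consequence of (3), since the kernel contributes only finite-dimensional rational homology in each degree. Thus $\pi(S)$ is a finitely generated full subdirect product of non-abelian limit groups satisfying the rational $H_2$-finiteness condition on every subgroup of finite index. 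Applying the structural results of \cite{BHMS1} (in particular the homological strengthening of Theorem~4.2 there, which extracts VSP directly from such an $H_2$-hypothesis) yields that $\pi(S)$ is VSP in $\G_1 \times \cdots \times \G_n$; combined with the finite-index property of $S \cap \G_0$ in $\G_0$, this gives VSP for $S$ in the full neat embedding. The principal obstacle is the appeal to \cite{BHMS1}: deducing the finite-index pairwise projection property from only a rational homological hypothesis rests on the delicate JSJ-based analysis of subdirect products of limit groups carried out there, and it seems difficult to avoid this input at the present level of generality.
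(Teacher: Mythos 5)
Your proof is correct and follows essentially the same route as the paper: the same cycle of implications, the same construction of a neat embedding via the Baumslag--Myasnikov--Remeslennikov embedding theorem, and the same appeal to Theorem 4.2 of \cite{BHMS1} for the key implication $(3)\Rightarrow(5)$ after passing to the quotient by the abelian part (your five-term exact sequence justification that the $H_2$-hypothesis descends to $\pi(S)$ is a detail the paper leaves implicit). The one slip is that to arrange $S\cap\G_0$ of finite index one must \emph{shrink} $\G_0$ --- replace it by a direct summand in which $S\cap\G_0$ has finite index and project away a complement --- rather than enlarge it; with that correction the argument matches the paper's.
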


\begin{proof}

The implications (1) implies (2) implies (3) are clear.
Theorem \ref{t:pairs} shows that (4) implies (1).

In order to establish the remaining implications, we first argue that 
every finitely generated residually free group
has a neat embedding. Now   \cite[Corollary 19]{BMR} tells us that
$S$ embeds into the direct product of a finite collection of limit
groups.  Since finitely generated subgroups of limit groups are limit groups,
we may assume that $S$ is a subdirect product of finitely many limit groups.
Moreover, by projecting away from any factor with which $S$ has trivial
intersection, we may assume that $S$ is a full subdirect product of limit
groups, say $S<\G_0\times\cdots\times\G_n$.  Moreover, if two or more of the factors
$\G_i$ are abelian, we may regard their direct product as a single direct
factor, so we may assume that $\G_0$ is abelian (possibly trivial), and that
$\G_i$ is non-abelian for $i>0$.   Finally, the intersection $S\cap\G_0$
has finite index in some direct summand of $\G_0$, and by projecting away from
a complement of such a direct summand, we may assume that $S\cap\G_0$
has finite index in $\G_0$. Thus we obtain a neat embedding of $S$.
With this existence result in hand, it is clear that (5) implies (4). To complete
the proof we shall argue that  (3) implies (5).

Given a neat embedding $S\hookrightarrow \Lambda_0\times\dots\times\Lambda_m$,
the image of the projection of $S$ to $\Lambda_0\times\Lambda_i$
has finite index for any $i>0$, and  the quotient $\overline S$
of $S$ by  $Z(S)=S\cap\Lambda_0$
is a  full  subdirect product of the non-abelian limit
groups $\Lambda_1,\dots,\Lambda_n$. Moreover, since $S\cap\Lambda_0$ is finitely
generated, (3) implies that $H_2(\overline S_0;\Q)$ is finite dimensional
for all subgroups $\overline S_0 < \overline S$ of finite index in $\overline S$.
It then follows from
Theorem 4.2 of  \cite{BHMS1} that the image of the projection of $S$ to
$\Lambda_i\times\Lambda_j$ has finite index for any $i,j$ with $0<i<j\le m$.
Thus (3) implies (5).  
\end{proof}

It follows easily from Theorem \ref{t:5.1} that any subdirect product
of limit groups   that contains a finitely presentable full subdirect product  
is again finitely presentable. More generally we prove:

\begin{thm}[= Theorem F]\label{t:5.2} Let $k \ge 2$ be an integer,
let $S\subset D:=\G_1\times\dots\times\G_n$ be a full subdirect
product of limit groups, and let $T\subset D$ be a subgroup that contains
$S$. If $S$ is of type ${\rm{FP}}_k(\Q)$ then so is $T$.
\end{thm}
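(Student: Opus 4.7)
The plan is to find a normal subgroup $L\triangleleft D$ with $L\subset S$ for which $D/L$ is virtually nilpotent, and then to transport the $\FP_k(\Q)$ property from $S$ to $T$ through the two short exact sequences with kernel $L$.

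For each $i$ set $L_i:=S\cap\G_i$ and $L:=L_1\times\cdots\times L_n$. Because $S$ is subdirect, every $\gamma\in\G_i$ is realised as the $i$-th coordinate of some $s\in S$, and the commutation of the direct factors then gives $\gamma L_i\gamma^{-1}\subset L_i$; so each $L_i$ is normal in $\G_i$, $L$ is normal in $D$, and obviously $L\subset S\subset T$. Since $S$ is of type $\FP_2(\Q)$, after projecting onto the product $B$ of the non-abelian $\G_i$ (killing the abelian kernel $S\cap A$, where $A$ is the product of the abelian factors, which is finitely generated abelian and hence of type $\FP_\infty$) we obtain a full subdirect product $\bar S\subset B$ which is itself of type $\FP_2(\Q)$ by a routine Bieri extension argument. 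Theorem \ref{t:5.1} applied to the neat embedding $\bar S\subset B$ yields VSP, so Proposition \ref{p:nilp}(1) gives $\gamma_r(\G_i^0)\subset\bar S\cap\G_i$ for some finite-index $\G_i^0\subset\G_i$ in each non-abelian factor. A short commutator-lifting argument (given $g\in\gamma_r(\G_i^0)$ and $h\in\G_i^0$, choose lifts $\tilde g, \tilde h\in S$; the commutator $[\tilde g,\tilde h]\in S$ has trivial components in every factor other than $\G_i$, the $A$-components cancelling because $A$ is abelian and the other non-abelian coordinates of $\tilde g$ being trivial) upgrades this to $\gamma_{r+1}(\G_i^0)\subset L_i$. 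For the abelian factors the assertion is trivial, so each $\G_i/L_i$ is virtually nilpotent and $D/L=\prod_i\G_i/L_i$ is virtually nilpotent, hence of type $\FP_\infty$.

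Since $S/L$ and $T/L$ are subgroups of the virtually nilpotent group $D/L$, they too are virtually nilpotent and of type $\FP_\infty$. Applying Bieri's standard Lyndon--Hochschild--Serre spectral-sequence result over the coefficient ring $\Q$ --- for $1\to N\to G\to Q\to 1$ with $G$ of type $\FP_k$ and $Q$ of type $\FP_{k+1}$ the kernel $N$ is of type $\FP_k$ --- to the extension $1\to L\to S\to S/L\to 1$ forces $L$ to be of type $\FP_k(\Q)$. Feeding this into $1\to L\to T\to T/L\to 1$ and invoking the elementary fact that an extension of $\FP_k(\Q)$ by $\FP_k(\Q)$ is $\FP_k(\Q)$ then completes the proof. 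The chief technical point is the invocation of Bieri's kernel-$\FP_k$ statement with the correct coefficient ring $\Q$; as the spectral-sequence argument is ring-independent, this causes no real difficulty but must be recorded carefully.
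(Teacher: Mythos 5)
Your first step --- producing a normal subgroup $L=(S\cap\G_1)\times\dots\times(S\cap\G_n)$ of $D$ contained in $S$ with $D/L$ virtually nilpotent --- is correct and is exactly how the paper begins (the paper simply cites \cite[Theorem 4.2]{BHMS1} for this, whereas you re-derive it; your commutator-lifting argument to pass from $\bar S\cap\G_i$ to $S\cap\G_i$ does work). The second half of your argument, however, contains a fatal error. The ``kernel'' statement you invoke --- that in an extension $1\to N\to G\to Q\to 1$ with $G$ of type $\FP_k(\Q)$ and $Q$ of type $\FP_{k+1}(\Q)$ the kernel $N$ must be of type $\FP_k(\Q)$ --- is false. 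Finiteness properties do not descend to kernels: the Stallings--Bieri groups $\ker(F_2^n\to\Z)$ sit in extensions with $G=F_2^n$ and $Q=\Z$ both of type $\FP_\infty$, yet the kernel is not $\FP_n$. The Lyndon--Hochschild--Serre spectral sequence gives the two implications (kernel and quotient $\FP_k$ $\Rightarrow$ group $\FP_k$) and (group $\FP_k$ and kernel $\FP_{k-1}$ $\Rightarrow$ quotient $\FP_k$), but nothing in the direction you need. Worse, the intermediate conclusion you are after is false in the present setting: for the groups of Theorem \ref{t:exs} one has $S\cap F_i=\gamma_{c+1}(F_i)$, a free group of infinite rank, so $L$ is not even of type $\FP_1(\Q)$ although $S$ is finitely presented. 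There is no way to repair a proof that routes the finiteness property through $L$ itself.

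The paper avoids this entirely by never descending to $L$. Having established that $D/L$ is virtually nilpotent, it applies \cite[Corollary 8.2]{BHMS1} to the subgroup $T/L$ of $D/L$ to produce a finite-index subgroup $S_0<S$ and a subnormal chain $S_0\triangleleft S_1\triangleleft\dots\triangleleft S_\ell=T$ in which every successive quotient $S_{i+1}/S_i$ is finite or infinite cyclic. One then climbs this chain: $S_0$ is $\FP_k(\Q)$ because it has finite index in $S$, and each $S_{i+1}$ is $\FP_k(\Q)$ because it is an extension of $S_i$ (of type $\FP_k(\Q)$) by a group of type $\FP_\infty$ --- this uses only the legitimate ``kernel and quotient $\Rightarrow$ group'' direction of the extension lemma. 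If you want to salvage your write-up, replace everything after the construction of $L$ with an argument of this kind.
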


\begin{proof}
 We have $S<T<D=\G_1\times\dots\times\G_n$ where the $\G_i$
are limit groups and $S$ is a full subdirect product of type ${\mathrm{FP}}_k(\Q)$
with $k\ge 2$.

In particular, $S$ is of type $\mathrm{FP}_2(\Q)$, so by \cite[Theorem 4.2]{BHMS1}
the quotient group $D/L$ is virtually nilpotent, where $L=(S\cap\G_1)\times\dots\times (S\cap\G_n)$.

By \cite[Corollary 8.2]{BHMS1}, applied to $T/L$, there is a finite index subgroup $S_0<S$, and
a subnormal chain $S_0\triangleleft S_1\triangleleft \dots \triangleleft S_\ell=T$
such that each quotient $S_{i+1}/S_i$ is either finite or infinite cyclic.

Since $S$ is of type $\mathrm{FP}_k(\Q)$, so is $S_0$, and by the obvious induction 
so are $S_1,\dots,S_\ell=T$.
\end{proof}

Note that the condition $k\ge 2$ in Theorem \ref{t:5.2} is essential.  For example, if
$G=\< x,y | r_1,r_2,\dots\>$ is a $2$-generator group that is not finitely presentable,
then the subgroup $T$ of $F(x,y)\times F(x,y)$ generated by $\{(x,x),(y,y),(1,r_1),(1,r_2),\dots\}$
is a full subdirect product that is not finitely generated, while the finitely generated
subgroup $S$ of $T$ generated by $\{(x,x),(y,y),(1,r_1)\}$ is also a full subdirect product
(provided $r_1\ne 1$ in $F(x,y)$).  This is another example of the notable divergence in behaviour
between finitely presentable residually free groups and more general finitely generated residually free groups.

\subsection{The three factor case}

Theorem \ref{t:main} tells us which full subdirect products of non-abelian limit
groups are finitely presentable.  In the case of two factors,
the criterion is particularly simple: the subgroup must have finite
index in the direct product.  Our next result, which extends Theorem \ref{t:pairs}
 of \cite{BM}, shows that the criterion also
takes a particularly simple form in the case of a full subdirect product of three
non-abelian limit groups.  Our results in Section \ref{s:exs} show that the
situation is noticeably more subtle for subdirect products of four or more factors.

\begin{theorem}
Let $\G_1,\G_2,\G_3$ be non-abelian limit groups, and let
$S<\G_1\times\G_2\times\G_3$ be a full subdirect
product.  Then $S$ is finitely presentable if and
only if there are subgroups $\Lambda_i<\G_i$ of
finite index, an abelian group $Q$, and epimorphisms
$\phi_i:\Lambda_i\to Q$, such that
$$S\cap (\Lambda_1\times\Lambda_2\times\Lambda_3) =\ker (\phi),$$
where $$\phi:\Lambda_1\times\Lambda_2\times\Lambda_3\to Q,~~\phi(\lambda_1,\lambda_2,\lambda_3):=\phi_1(\lambda_1)+\phi_2(\lambda_2)+\phi_3(\lambda_3).$$
\end{theorem}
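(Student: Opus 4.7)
The plan is to handle the two implications separately, both via the VSP machinery already developed.

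For sufficiency, assume the structural description. Since each $\phi_k$ is surjective, for any pair $(\lambda_i,\lambda_j) \in \Lambda_i \times \Lambda_j$ one solves $\phi_k(\lambda_k) = -\phi_i(\lambda_i) - \phi_j(\lambda_j)$ in $\Lambda_k$, giving an element of $\ker(\phi)$ projecting to $(\lambda_i,\lambda_j)$; hence $p_{ij}(\ker(\phi)) = \Lambda_i\times\Lambda_j$, which has finite index in $\G_i\times\G_j$. Thus $\ker(\phi)$ is VSP and Theorem \ref{t:pairs} makes it finitely presented. Since $\ker(\phi) = S\cap(\Lambda_1\times\Lambda_2\times\Lambda_3)$ has finite index in $S$, the group $S$ itself is finitely presented.

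For necessity, assume $S$ is finitely presentable. All three $\G_i$ are non-abelian, so the embedding $S\hookrightarrow \G_1\times\G_2\times\G_3$ is neat (with trivial abelian factor) and Theorem \ref{t:main} ensures that $S$ has VSP. Applying Proposition \ref{p:nilp}(1) with $n=3$ yields finite-index subgroups $G_i^0\subset\G_i$ with $[G_i^0,G_i^0]\subset S$. Setting $D^{(0)} := G_1^0\times G_2^0\times G_3^0$ and $S^{(0)} := S\cap D^{(0)}$, the subgroup $S^{(0)}$ contains the full commutator subgroup of $D^{(0)}$, so $Q := D^{(0)}/S^{(0)}$ is abelian and the quotient $\phi: D^{(0)}\to Q$ decomposes as $\phi(g_1,g_2,g_3) = \phi_1(g_1)+\phi_2(g_2)+\phi_3(g_3)$ with $\phi_i := \phi|_{G_i^0}$.

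The main obstacle is that these restrictions $\phi_i$ need not be surjective onto $Q$, so one has to shrink the $G_i^0$ to arrange surjectivity. Set $Q_i := \phi_i(G_i^0)$; a short calculation identifies
$$ p_{ij}(S^{(0)}) = \{(g_i,g_j)\in G_i^0\times G_j^0 \ : \ \phi_i(g_i)+\phi_j(g_j)\in Q_k\}, $$
so $[G_i^0\times G_j^0 : p_{ij}(S^{(0)})] = [Q:Q_k]$. The VSP hypothesis forces the left side to be finite, hence each $Q_k$ has finite index in $Q$. Let $\tilde Q := Q_1\cap Q_2\cap Q_3$ (of finite index in $Q$) and put $\Lambda_i := \phi_i^{-1}(\tilde Q)\subset G_i^0$, which has finite index in $\G_i$. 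Each restriction $\phi_i|_{\Lambda_i} : \Lambda_i\twoheadrightarrow \tilde Q$ is onto by construction, and since $\Lambda_i\subset G_i^0$ one has
$$ S\cap(\Lambda_1\times\Lambda_2\times\Lambda_3) = S^{(0)}\cap(\Lambda_1\times\Lambda_2\times\Lambda_3) = \ker\bigl(\phi|_{\Lambda_1\times\Lambda_2\times\Lambda_3}\bigr), $$
which is precisely the desired description with abelian group $\tilde Q$ and surjections $\phi_i|_{\Lambda_i}$.
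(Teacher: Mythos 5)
Your proposal is correct and follows essentially the same route as the paper: both directions use the VSP criterion, and for necessity both constructions take the finite-index subgroups $G_i^0$ (the paper's $K_i$) with $[G_i^0,G_i^0]\subset S$, form the abelian quotient $Q$ of their product by its intersection with $S$, show each $Q_k=\phi_k(G_k^0)$ has finite index in $Q$ via the projections $p_{ij}$, and then pull back the intersection $Q_1\cap Q_2\cap Q_3$. The only cosmetic difference is that you deduce $[Q:Q_k]<\infty$ from the clean index identity $[G_i^0\times G_j^0:p_{ij}(S^{(0)})]=[Q:Q_k]$, whereas the paper chases explicit elements to show every class has a power in $Q_k$; both are valid.
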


\begin{pf} 
First we argue that the
criterion in the statement is sufficient.
Each $\phi_i$ is an epimorphism, so given $\lambda_1\in\Lambda_1$
and $\lambda_2\in\Lambda_2$, there exists $\lambda_3\in\Lambda_3$ such that
$\phi_3(\lambda_3)=-\phi_1(\lambda_1)-\phi_2(\lambda_2)$.  Thus
$(\lambda_1,\lambda_2,\lambda_3)\in\ker (\phi)$, and the projection
$p_{12}:\G_1\times\G_2\times\G_3\to\G_1\times\G_2$ maps $\ker (\phi)$
onto the finite-index subgroup $\Lambda_1\times\Lambda_2$ of $\G_1\times\G_2$.
Similar arguments apply to the projections $p_{13}$ and $p_{23}$, so
the finite-index subgroup $\ker (\phi)$ of $S$ is finitely presentable,
by Theorem \ref{t:pairs}, and hence $S$ is also finitely presentable.

\medskip
Conversely, suppose that $S$ is finitely presentable.
By \cite[Theorem 4.2]{BHMS1}
the image of each of the projections $p_{ij}:S\to\G_i\times\G_j$
($1\le i<j\le 3$) has finite index.  The images of $p_{12}$
and $p_{13}$ intersect in a finite-index subgroup
$K_1<\G_1$. For each $a\in K_1$ there are elements $(a,1,x_a),(a,y_a,1)\in S$.
So given $a,b\in K_1$, we have $([a,b],1,1)=[(a,1,x_a),(b,y_b,1)]\in [S,S]$.
Thus $[K_1,K_1]<([S,S]\cap\G_1)$.  Similarly there are finite-index
subgroups $K_2<\G_2$ and $K_3<\G_3$ such that $[K_i,K_i]<([S,S]\cap\G_i)$
for $i=2,3$.   Let $A$ denote the abelian group
$$A=\frac{K_1\times K_2\times K_3}{S\cap (K_1\times K_2\times K_3)},$$
let $\phi:K_1\times K_2\times K_3\to A$ be the canonical epimorphism, and
let $\phi_i$ be the restriction of $\phi$ to $K_i$ for $i=1,2,3$.
Since $p_{23}(S)$ has finite index in $\G_2\times\G_3$, the same is true
of $p_{23}(S\cap (K_1\times K_2\times K_3))$ in $K_2\times K_3$.
Now let $\alpha=(x,y,z)\cdot (S\cap (K_1\times K_2\times K_3))\in A$.
For some positive integer $N$ we have
$(y^N,z^N)\in p_{23}(S\cap (K_1\times K_2\times K_3))$, so
$(w,y^N,z^N)\in S$ for some $w\in K_1$.  But then 
$\alpha^N=\phi_1(x^Nw^{-1})$, so $\phi_1(K_1)$ has finite index
in $A$.  Similarly, $\phi_2(K_2)$ and $\phi_3(K_3)$ have finite
index in $A$.  Let $Q$ be the finite-index subgroup
$\phi_1(K_1)\cap\phi_2(K_2)\cap\phi_3(K_3)$ of $A$, and
define $\Lambda_i=\phi_i^{-1}(Q)$ for $i=1,2,3$.  Then
$\Lambda_i$ has finite index in $\G_i$,
$S\cap (\Lambda_1\times\Lambda_2\times\Lambda_3)$
is the kernel of the restriction
$\phi:\Lambda_1\times\Lambda_2\times\Lambda_3\to Q$, and
each $\phi_i:\Lambda_i\to Q$ is an epimorphism.
\end{pf}

\subsection{Classification up to commensurability}

We construct a collection of examples of finitely presentable residually
free groups which is complete up to commensurability.  Our proof makes use of
the following observation on subgroups of finitely generated nilpotent groups.

\begin{lemma}\label{l:nilp_subgroup}
Let $G$ be a finitely generated nilpotent group and let $H$ be a subgroup of $G$.
If the image of $H$ in the abelianisation of $G$ has finite index, then $H$
has finite index in $G$. 
\end{lemma}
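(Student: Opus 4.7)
The plan is to induct on the nilpotency class $c$ of $G$. The base case $c=1$ is immediate, since then $G_{\rm ab}=G$. For the inductive step, with $c\geq 2$, set $Z:=\gamma_c(G)$; this is central in $G$ and contained in $[G,G]$, so $G/Z$ has nilpotency class $c-1$ and $(G/Z)_{\rm ab}=G_{\rm ab}$. The image of $HZ/Z$ in $(G/Z)_{\rm ab}$ coincides with the image of $H$ in $G_{\rm ab}$, which by hypothesis has finite index; so the inductive hypothesis, applied to $HZ/Z<G/Z$, gives $[G:HZ]=:M<\infty$.

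It remains to show that $H$ has finite index in $HZ$, or equivalently, since $Z$ is abelian, that $Z/(H\cap Z)$ is finite. The key ingredient is the standard fact that in a group of class $c$, the left-normed $c$-fold commutator $G^{\times c}\to\gamma_c(G)$ descends to a $c$-multilinear map $\Psi\colon G_{\rm ab}^{\times c}\to\gamma_c(G)$: this is a routine consequence of the commutator identities $[xy,z]=[x,z]^y[y,z]$ and $[x,yz]=[x,z][x,y]^z$ together with centrality of $\gamma_c(G)$. Fix a finite generating set $g_1,\dots,g_k$ of $G$, and for each $i$ write $g_i^M=h_iz_i$ with $h_i\in H$ and $z_i\in Z$. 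Since $z_i\in Z\subset [G,G]$, the images of $g_i^M$ and $h_i$ in $G_{\rm ab}$ agree, so multilinearity yields
\[ [g_{i_1},\dots,g_{i_c}]^{M^c}=\Psi(M\overline{g_{i_1}},\dots,M\overline{g_{i_c}})=\Psi(\overline{h_{i_1}},\dots,\overline{h_{i_c}})=[h_{i_1},\dots,h_{i_c}]\in H. \]
Since $Z=\gamma_c(G)$ is generated as an abelian group by the left-normed $c$-fold commutators in $g_1,\dots,g_k$, every element of $Z$ has $M^c$-th power in $H\cap Z$; being finitely generated abelian of bounded exponent, $Z/(H\cap Z)$ is finite, and therefore $[G:H]=[G:HZ]\cdot[Z:H\cap Z]<\infty$.

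No step presents a serious obstacle; the piece that repays a moment's verification is the multilinearity of the $c$-fold commutator on $G_{\rm ab}^{\times c}$, but that is entirely elementary given $\gamma_{c+1}(G)=1$. An alternative (but essentially equivalent) presentation would avoid the explicit use of $\Psi$ by inducting on the commutator depth and exploiting centrality of $Z$ at each step to replace $g_i^M=h_iz_i$ by $h_i$; I have a slight preference for the multilinear formulation because it makes the uniform exponent $M^c$ transparent.
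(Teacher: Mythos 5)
Your proof is correct and follows essentially the same route as the paper's: induction on the nilpotency class, reducing to showing that a uniform power of each weight-$c$ left-normed commutator in the generators lies in $H\cap\gamma_c(G)$, via the multilinearity of the $c$-fold commutator modulo $\gamma_{c+1}(G)=1$. The only cosmetic difference is that you take the exponent $M=[G:H\gamma_c(G)]$ where the paper uses $n=[G:H\gamma_2(G)]$; both choices make the identity $[g_{i_1}^M,\dots,g_{i_c}^M]=[h_{i_1},\dots,h_{i_c}]$ work for the same reason.
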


\begin{proof}
We argue by induction on the nilpotency class $c$ of $G$. If $c=1$ then $G$
is abelian and there is nothing to prove. By induction, we may assume that the 
image of $H$ in  $G/\gamma_c G$ has finite index. Thus  it suffices to show that $H\cap\g_c{G}$ has finite index
in $\g_c{G}$.  Now $\g_c{G}$ is an abelian group generated by finitely
many commutators $w=[x_1,\dots,x_c]$ of weight $c$, so it in fact suffices
to show that each has a power that lies in $\g_c{G}\cap H$.  If $|G:H\gamma_2 G|=n$
then we can write $x_i^n=h_iy_i$ for some $h_i\in H$ and some $y_i\in\gamma_2 G$.
An elementary commutator calculation using the fact that $\g_c{G}$ is central in $G$
yields the equality $$w^{n^c}=[x_1^n,\dots,x_c^n]=[h_1,\dots,h_c]\in H,$$
which completes the proof.
\end{proof}

\begin{definition}
Let $\mathcal{G}=\{\G_1,\dots,\G_n\}$ be a finite collection of $2$ or more
limit groups, let $c\ge 2$ be an integer, and let
$\underline{g}=\{(g_{k,1},\dots,g_{k,n}),~1\le k\le m\}$ be a finite
subset of $\G:=\G_1\times\cdots\times\G_n$.

Define
$T=T(\mathcal{G},\underline{g},c)$ to be the subgroup of 
$\G$ generated by $\underline{g}$ together with the
$c$'th term $\gamma_c(\G)$ of the lower
central series of $\G$.
\end{definition}

\begin{theorem}\label{t:cchar}
Let $T(\mathcal{G},\underline{g},c)$ be defined as above.
\begin{enumerate}
\item If, for all $1\le i<j\le n$, the images in $H_1\G_i\times H_1\G_j$ of the
ordered pairs $(g_{k,i},g_{k,j})$ generate a subgroup of finite index, then the
residually free group $T(\mathcal{G},\underline{g},c)$ is finitely presentable.
\item Every finitely presentable residually free group is either a limit
group or else is commensurable with
one of the groups $T(\mathcal{G},\underline{g},c)$.
\end{enumerate}
\end{theorem}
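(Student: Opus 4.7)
\emph{Plan for Theorem \ref{t:cchar}.} Both parts reduce to bookkeeping: part (1) is an application of the VSP criterion combined with Lemma \ref{l:nilp_subgroup}, and part (2) is extracted from the neat-embedding characterisation of finitely presented residually free groups together with Proposition \ref{p:nilp}.

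\emph{Part (1).} Since $T = T(\mathcal{G},\underline g,c)$ lies in a direct product of limit groups, it is automatically residually free; the content is that $T$ is finitely presentable. The plan is to apply Theorem \ref{t:pairs}, for which I must verify VSP for $T$ inside $\Gamma := \Gamma_1 \times \cdots \times \Gamma_n$. Because the projection $p_{ij}\colon \Gamma \to \Gamma_i \times \Gamma_j$ is surjective, it sends $\gamma_c(\Gamma)$ onto $\gamma_c(\Gamma_i \times \Gamma_j)$, so $p_{ij}(T) \supset \gamma_c(\Gamma_i \times \Gamma_j)$. The quotient $Q_{ij} := (\Gamma_i \times \Gamma_j)/\gamma_c(\Gamma_i \times \Gamma_j)$ is finitely generated and nilpotent of class at most $c-1$, with abelianisation $H_1\Gamma_i \oplus H_1\Gamma_j$. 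By Lemma \ref{l:nilp_subgroup}, it therefore suffices to show that the image of $p_{ij}(T)$ in $H_1\Gamma_i \oplus H_1\Gamma_j$ has finite index; but this image contains the images of the pairs $(g_{k,i},g_{k,j})$, which is precisely the standing hypothesis. Hence $p_{ij}(T)$ has finite index in $\Gamma_i \times \Gamma_j$, VSP holds, and Theorem \ref{t:pairs} delivers finite presentability.

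\emph{Part (2).} Let $S$ be a finitely presentable residually free group that is not itself a limit group. By Theorem \ref{t:5.1} there is a neat embedding
\[
S \hookrightarrow \Gamma_0 \times \Gamma_1 \times \cdots \times \Gamma_n
\]
into a product of limit groups (with $\Gamma_0$ abelian and $\Gamma_i$ non-abelian for $i \ge 1$) in which $S$ is VSP. If the embedding had only a single factor then $S$ would be a finite-index subgroup of a limit group, hence itself a limit group; so we may assume there are at least two factors. Proposition \ref{p:nilp}(1), applied to the VSP subgroup $S$ of this $(n{+}1)$-factor product, furnishes finite-index subgroups $\Gamma_i^0 \subset \Gamma_i$ with $\gamma_n(\Gamma_i^0) \subset S$ for every $i$. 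Set $c := \max(n,2)$ and $\Gamma^0 := \Gamma_0^0 \times \Gamma_1^0 \times \cdots \times \Gamma_n^0$. For $c \ge 2$ one has $\gamma_c(\Gamma^0) = \prod_i \gamma_c(\Gamma_i^0)$ (the abelian factor contributes trivially), so $\gamma_c(\Gamma^0) \subset S$. Put $S_0 := S \cap \Gamma^0$; this is a finite-index subgroup of $S$ containing $\gamma_c(\Gamma^0)$. Choosing any finite generating set $\underline g$ for $S_0$, we get $T(\mathcal{G},\underline g,c) = \langle \underline g\rangle \cdot \gamma_c(\Gamma^0) = S_0$ with $\mathcal{G} = \{\Gamma_0^0,\Gamma_1^0,\ldots,\Gamma_n^0\}$ (omitting $\Gamma_0^0$ if trivial). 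Thus $S$ is commensurable with $T(\mathcal{G},\underline g,c)$, as required.

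\emph{Main obstacles.} The arithmetic is straightforward; the delicate points are (a) remembering in part (1) that Lemma \ref{l:nilp_subgroup} applies to the abelianisation of the nilpotent \emph{quotient} $Q_{ij}$ (not of $p_{ij}(T)$), so that it is genuinely $H_1\Gamma_i \oplus H_1\Gamma_j$ that controls the argument, and (b) in part (2), correctly bookkeeping the abelian factor $\Gamma_0$ and the degenerate single-factor case in order to honour the ``two or more limit groups'' requirement in the definition of $T(\mathcal{G},\underline g,c)$. Neither presents a serious difficulty, which is why this theorem can be regarded as a direct corollary of the structural results already established.
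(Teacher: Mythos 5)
Your proposal is correct and follows essentially the same route as the paper: part (1) is the VSP criterion (Theorem \ref{t:pairs}) verified via Lemma \ref{l:nilp_subgroup} applied to the nilpotent quotient by $\gamma_c$, and part (2) extracts a finite-index subgroup $S\cap\Gamma^0$ of the form $T(\mathcal G,\underline g,c)$ from a neat embedding via Proposition \ref{p:nilp}(1). Your choice $c=\max(n,2)$ is in fact slightly more careful than the paper's $c=n-1$, which can degenerate below the required bound $c\ge 2$ when there are few factors; otherwise the two arguments coincide.
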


\begin{pf} 
To see that $T=T(\mathcal{G},\underline{g},c)$ is finitely presentable,
it is sufficient in the light of Theorem \ref{t:pairs}
to know that the projection of $T$ to
$\G_i\times\G_j$ is virtually surjective for each $i<j$, and this
follows from Lemma \ref{l:nilp_subgroup}

Conversely, suppose that $S$ is a finitely presentable residually free group.
If $S$ is not itself a limit group, then Theorem \ref{t:main}
tells us that $S$ may be expressed as a full subdirect product of limit
groups $\Delta_1,\dots,\Delta_n$ such that the projection of $S$ to $\Delta_i\times\Delta_j$
is virtually surjective for each $i<j$.  By Proposition \ref{p:nilp}(1),
each $\Delta_i$ contains a finite-index subgroup $\G_i$ such that
$\gamma_{n-1}(\G_i)\subset S$.  Set $\mathcal{G}=\{\G_1,\dots,\G_n\}$,
and $c=n-1$.  We choose any finite set
$\underline g= \{(g_{1,1},\dots,g_{1,n}),\dots,(g_{m,1},\dots,g_{m,n})\}$
in the direct product $D:=\G_1\times\cdots\times\G_n$ whose image in
$D/\gamma_{n-1}(D)$
generates 
$(S\cap D)/\gamma_{n-1}(D)$.
Then $T=T(\mathcal{G},\underline{g},n-1)=S\cap D$
is a finite-index subgroup of $S$.
\end{pf}

\section{The Canonical Embedding Theorem}\label{s:embed}
 
The purpose of this section is to prove Theorem \ref{t:ee(S)}: we shall
describe an effective construction for $\ee{S}$, hence 
$\eer{S}$, then establish the universal property of the
latter. We shall see that the direct factors of $\ee{S}$ are  the maximal limit group quotients of $S$:
the maximal free abelian quotient  $\go{S}$
is one of these, and the remaining (non-abelian)
quotients form $\eer{S}$.  At the end of the section we shall discuss how $\ee{S}$
is related to the Makanin-Razborov diagram for $S$.

Our first goal is to prove Theorem \ref{t:ee(S)}(1).
\begin{thm}\label{t:A1}
There is an algorithm that, given
a finite presentation of a residually free group $S$, will construct
an embedding  $$S\hookrightarrow \ee{S} = \G_{\rm ab} \times \eer{S}$$  
where $\G_{\rm ab}=\go{S}$ and 
$\eer{S} = \G_1\times\dots\times\G_n$ with each $\G_i\ (i\ge 1)$
a non-abelian limit group.
The intersection of $S$ with the 
kernel of the projection $\rho:\ee{S}\to\eer{S}$ 
is the centre $Z(S)$ of $S$.
\end{thm}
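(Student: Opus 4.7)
The plan is to build $\iota$ as the diagonal map into $\G_{\rm ab}\times\eer{S}$, where $\G_{\rm ab}=\go{S}$ is the maximal free-abelian quotient and $\eer{S}$ is the direct product of the canonical finite collection of maximal non-abelian limit-group quotients of $S$. The first factor is immediate: from any finite presentation of $S$, Smith normal form on the abelianised relation matrix computes $H_1(S,\Z)$ and hence its torsion-free quotient $\G_{\rm ab}$, together with the canonical homomorphism $\pi_{\rm ab}\colon S\to\G_{\rm ab}$.

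For the second factor I would first produce (effectively, and without Makanin--Razborov) some embedding $S\hookrightarrow L_1\times\cdots\times L_m$ into a finite direct product of limit groups. That such an embedding exists is \cite[Corollary 19]{BMR}; to make the construction effective I would run, in parallel with an enumeration of candidate tuples of quotient maps $S\to L_j$ to prospective limit-group quotients, the algorithm that verifies limit-group-hood of a finitely presented group (available once one has word, conjugacy and homomorphism algorithms on limit groups), together with injectivity checks for the diagonal map on balls of increasing radius in $S$. Next I would pass to a canonical form by introducing the preorder on non-abelian limit-group quotients $q\colon S\twoheadrightarrow\G$ in which $q$ dominates $q'\colon S\twoheadrightarrow\G'$ when $q'$ factors through $q$, and extracting representatives $q_i\colon S\twoheadrightarrow\G_i$ ($1\le i\le n$) of the isomorphism classes of maximal elements. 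Finiteness of the maximal set is the key lemma: every non-abelian limit-group quotient of $S$ is dominated by one of the non-abelian factors of the fixed embedding $S\hookrightarrow\prod L_j$, and a descending-chain argument for limit-group quotients (reflecting the Noetherian character of limit groups) cuts off the domination tower above each $L_j$. Setting $\eer{S}:=\G_1\times\cdots\times\G_n$ and $\ee{S}:=\G_{\rm ab}\times\eer{S}$, I define $\iota(s):=(\pi_{\rm ab}(s),q_1(s),\ldots,q_n(s))$.

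Injectivity of $\iota$ follows because the initial embedding $S\hookrightarrow\prod L_j$ already separates points of $S$, and each projection $S\to L_j$ either factors through $\pi_{\rm ab}$ (when $L_j$ is abelian) or through some $q_i$ (by maximality, since the image $S\to L_j$ is itself a non-abelian limit-group quotient of $S$ and hence dominated by a maximal one). For the centre identification $S\cap\ker\rho=Z(S)$: since $\ker\rho=\G_{\rm ab}\times\{1\}$ is central in $\ee{S}$, every element of $\iota(S)\cap\ker\rho$ lies in $Z(S)$. Conversely, if $z\in Z(S)$ then $q_i(z)$ is central in $q_i(S)=\G_i$ for each $i\ge 1$; since $\G_i$ is a non-abelian limit group it is CSA and therefore has trivial centre, so $q_i(z)=1$ for every such $i$, placing $z$ in $\iota(S)\cap\ker\rho$.

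The main obstacle I anticipate is the canonical and effective production of the maximal non-abelian limit-group quotients without recourse to Makanin--Razborov: specifically, proving that the finite list of maximal quotients is an invariant of $S$ (independent of any auxiliary embedding used during the search) and that the search terminates with certification. Once this is in hand, injectivity of $\iota$ and the centre computation follow formally from the construction and from the CSA property of non-abelian limit groups.
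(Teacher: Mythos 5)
Your target object is the right one --- the paper's $\ee{S}$ is indeed $\go{S}$ times the product of the maximal non-abelian limit-group quotients of $S$, the factorisation of any map onto a non-abelian limit group through one of the coordinate projections is exactly the paper's Proposition \ref{factorisation} (proved there via commutative transitivity and the absence of non-trivial abelian normal subgroups in non-abelian limit groups; you assert it as your ``key lemma'' without proof), and your centre computation via triviality of $Z(\G_i)$ matches Lemma \ref{l:Z}. But the theorem is an algorithmic statement, and your algorithm has a certification gap that you yourself flag as ``the main obstacle'' without resolving --- and that is precisely where all the work lies. Checking injectivity of a candidate diagonal map $S\to L_1\times\cdots\times L_m$ ``on balls of increasing radius'' can only ever \emph{refute} injectivity; if the map is injective, the procedure confirms nothing in finite time, so your search never halts with a certified embedding. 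Nothing in your scheme certifies that the quotients found are genuinely the maximal ones, and ``the algorithm that verifies limit-group-hood'' is really the Groves--Wilton enumeration plus an isomorphism search, again only a partial certificate for one of several conditions that must all be confirmed simultaneously. (Your appeal to a descending-chain argument to bound the maximal quotients is also misplaced: once Proposition \ref{factorisation} is available, every non-abelian limit quotient factors through one of the finitely many coordinate projections, so there is no tower to truncate.)

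The paper's solution is to replace ``this tuple of quotients gives an embedding into a product of limit groups'' by a finite, semi-decidable certificate: a \emph{maximal centralizer structure} $(Y_1,\dots,Y_n;Z_1,\dots,Z_n)$, a list of finite subsets of $S$ subject to conditions MCS(1)--(6), each confirmable using the word problem in $S$ (solvable by enumerating homomorphisms to $F_2$), Tietze-move searches for presentations in splitting normal form, and coset enumeration. Lemma \ref{jrl}, an argument about actions on trees, shows that the splitting condition MCS(5), in the presence of the others, forces each $S/\sg{Z_i}$ to \emph{be} a non-abelian limit group (namely a coordinate factor of some ambient embedding), and the nilpotency condition MCS(6) forces $\bigcap_i\sg{Z_i}=Z(S)$, since otherwise a rank-two free subgroup of the intersection would survive into a nilpotent quotient; this is what certifies injectivity modulo the centre. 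Existence of an MCS (Lemma \ref{mcs-exists}) guarantees that the enumeration terminates. Without a substitute for such a certificate, your proposal is an existence argument for $\ee{S}$, not an algorithm.
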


In outline, our proof of this theorem proceeds as follows. First we define a
finite set of data --- a {\em{maximal centralizer system}} --- which encodes a canonical
system of subgroups  in $S$. Then,
in Lemma \ref{mcs-exists},
we prove that every finitely presented residually free group possesses such a system; the proof, which is
not effective, relies on Proposition \ref{p:nilp} and results from \cite{BHMS1}.
In Lemma \ref{mcs-suffices} we establish the existence of a simple algorithm that, given
a maximal centralizer system, will construct $S\hookrightarrow \ee{S}$. Finally, in Subsection \ref{ss:A12},
we describe an algorithm that, given a finite presentation of a residually free group, will construct a maximal centralizer
system for that group (termination of the algorithm is guaranteed by Lemma \ref{mcs-exists}).

\smallskip

The description of $Z(S)$ given in Theorem \ref{t:A1} is covered by the following lemma.

\begin{lemma}\label{l:Z} Let $S$ be a residually free group
and let $Z(S)$ be its centre.
\begin{enumerate}
\item
The restriction of $S\to \go{S}$ to $Z(S)$ is injective.
\item If $\G$ is a non-abelian limit group and $\psi:S\to\G$
has non-abelian image, then $\psi(Z(S))=\{1\}$.
\end{enumerate}
\end{lemma}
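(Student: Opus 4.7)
The plan is to prove the two parts independently, with part (1) resting on residual freeness together with the triviality of the centre of a non-abelian free group, and part (2) on the fact that limit groups are commutative transitive (i.e.\ the centralizer of any non-trivial element is abelian). Both arguments are short.

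For part (1), I will argue by contrapositive: given $z \in Z(S)$ with $z \neq 1$, I want to produce a homomorphism to $\go{S}$ that does not kill $z$. By residual freeness there is a homomorphism $\phi : S \to F$ to a free group $F$ with $\phi(z) \neq 1$, and since $z \in Z(S)$ the element $\phi(z)$ lies in the centre of $\phi(S) \subseteq F$. The key dichotomy is now whether $\phi(S)$ is abelian. If $\phi(S)$ is non-abelian, it is a non-abelian free group, hence has trivial centre, forcing $\phi(z) = 1$, a contradiction. Therefore $\phi(S)$ is abelian, so $\phi$ factors through $S_{\mathrm{ab}}$; because $\phi(S)$ is a subgroup of a free group it is torsion-free, so this factorisation descends further to $\go{S}$, showing that $z$ has non-trivial image there.

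For part (2), I will suppose for contradiction that some $z \in Z(S)$ has $\psi(z) \neq 1$ in $\G$. Because $\psi(S)$ is non-abelian, I can choose $a, b \in \psi(S)$ with $[a,b] \neq 1$. Since $z$ is central in $S$, both $a$ and $b$ commute with $\psi(z)$, so both lie in the centralizer $C_{\G}(\psi(z))$. Commutative transitivity of the limit group $\G$ forces $C_{\G}(\psi(z))$ to be abelian, contradicting $[a,b] \neq 1$. Hence $\psi(Z(S)) = \{1\}$.

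There is no serious obstacle: the two essential inputs --- that non-abelian free groups have trivial centre, and that limit groups are commutative transitive --- are standard and already appear (implicitly or explicitly) in the references cited in the paper. The only minor point requiring attention in (1) is the descent from $S_{\mathrm{ab}}$ to $\go{S}$, which is handled automatically by torsion-freeness of subgroups of free groups.
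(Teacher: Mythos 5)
Your proof is correct and follows essentially the same route as the paper: part (1) combines residual freeness with the triviality of the centre of a non-abelian free group (the paper phrases the dichotomy as "the only free group with non-trivial centre is $\Z$", which is your abelian/non-abelian split), and part (2) is exactly the commutative-transitivity argument the paper invokes. No gaps.
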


\begin{proof} Let $\gamma\in Z(S)$. Since $S$ is residually free,
there is an epimorphism $\psi$ from $S$ to a free group such that
$\psi(\gamma)\neq 1$. But the only free group with a non-trivial centre is $\Z$,
so $\psi([S,S])=1$ and hence $\gamma\not\in [S,S]$. This observation, together
with the fact that residually free groups are torsion-free, proves (1).

Item (2) follows easily from the fact that limit groups are
commutative-transitive.
\end{proof}

\subsection{Centralizer systems}\label{s:chuck}

Before pursuing the strategy of proof outlined above, we present an auxiliary 
result that motivates the definition of a maximal centralizer system.
Recall that a set of subgroups of a group $H$ is said to be {\em characteristic}
if any automorphism of $H$ permutes the subgroups in the set.

\begin{prop} \label{charsubgp}
Let 
$D=\G_1\times\dots\times \G_n$ be a direct product of
non-abelian limit groups, let $S\subset D$ be a full subdirect product,
let $L_i=S\cap\G_i$
and let
$$M_i = S\cap(\G_1\times\cdots\times\G_{i-1}\times 1\times
\G_{i+1}\times \cdots\times\G_n).$$ 
The sets of subgroups $\{L_1,\dots,L_n\}$
and  $\{M_1,\ldots, M_n\}$ are characteristic in $S$. 
\end{prop}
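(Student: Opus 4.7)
The plan is to give an intrinsic characterisation of the $L_i$ in terms of centralisers in $S$, exploiting the fact that non-abelian limit groups are CSA and hence commutative transitive.

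First I would observe that each $L_i$ is a non-trivial normal subgroup of $\G_i$ (normality because $S$ is subdirect, non-triviality by fullness) and claim that $L_i$ is automatically non-abelian. Indeed, if some non-trivial $A \trianglelefteq \G_i$ were abelian, the unique maximal abelian subgroup of $\G_i$ containing $A$ would be malnormal by CSA; since $A$ is normal in $\G_i$, this malnormal subgroup would swallow all of $\G_i$, contradicting the hypothesis that $\G_i$ is non-abelian. Granted this, a short calculation using commutative transitivity gives $M_i = C_S(L_i)$: if $s = (s_1,\dots,s_n) \in S$ centralises $L_i$, then $s_i$ commutes with two non-commuting elements of $L_i \subset \G_i$, which forces $s_i = 1$; the reverse containment is clear because $L_i$ and $M_i$ live on disjoint coordinate supports.

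The heart of the argument is to show that every automorphism $\phi$ of $S$ permutes $\{L_1,\dots,L_n\}$. Set $N_i = \phi(L_i)$; these are non-trivial, pairwise commuting, normal subgroups of $S$. For each $k$ the projection $p_k(N_i)$ is normal in $\G_k$, and whenever non-trivial it is non-abelian by the same malnormality argument. Commutative transitivity of $\G_k$ then forbids two of these projections from being simultaneously non-trivial for a fixed $k$: picking any $b \neq 1$ in $p_k(N_j)$, the whole group $p_k(N_i)$ would be contained in the abelian centraliser $C_{\G_k}(b)$, contradicting its non-abelianness. Hence for each $k$ at most one index $i$ satisfies $p_k(N_i) \neq 1$, and since each $N_i$ is non-trivial this assignment must in fact be a bijection. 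Writing $\sigma$ for its inverse, one obtains $N_i \subset L_{\sigma(i)}$. Running the same argument with $\phi^{-1}$ and using $L_j \cap L_k = 1$ for $j \neq k$ upgrades inclusion to equality: $\phi(L_i) = L_{\sigma(i)}$.

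Finally, because automorphisms commute with the formation of centralisers, the identity $M_i = C_S(L_i)$ immediately transfers this permutation to the $M_i$, giving $\phi(M_i) = M_{\sigma(i)}$. The step I expect to require the most care is the initial dichotomy that a non-trivial normal subgroup of a non-abelian limit group is automatically non-abelian; once this is in hand, the commutative-transitivity argument confining each $N_i$ to a single factor $\G_{\sigma(i)}$ is routine.
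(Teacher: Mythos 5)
Your proof is correct, but it is organised in the opposite direction from the paper's and uses a different mechanism at the key step. The paper first shows that $\{M_1,\dots,M_n\}$ is characteristic by giving an \emph{intrinsic} description: the $M_i$ are precisely the maximal elements of the collection of centralizers in $S$ of non-commuting pairs (each such centralizer lies in $\bigcap_{i\in I}M_i$ where $I$ indexes the coordinates on which the pair fails to commute, and $M_i=C_S(x_i,y_i)$ for a non-commuting pair in $L_i$); it then recovers $L_i$ as $C_S(M_i)$. You instead attack the $L_i$ first, showing directly that an automorphism $\phi$ permutes them: the images $N_i=\phi(L_i)$ are non-trivial, pairwise commuting, normal, and their projections to each $\G_k$ are normal and non-abelian when non-trivial, so commutative transitivity plus a pigeonhole count confines each $N_i$ to a single factor; you then transfer to the $M_i$ via $M_i=C_S(L_i)$. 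Both arguments rest on the same two facts -- that a non-trivial normal subgroup of a non-abelian limit group is non-abelian (which the paper leaves implicit when it asserts that each $L_i$ contains a non-commuting pair, and which you rightly flag as the point needing care), and commutative transitivity. The trade-off: the paper's maximal-centralizer description is reused later in the definition and analysis of maximal centralizer structures, so it earns its keep downstream; your version produces the permutation $\sigma$ explicitly, which is exactly the form of the statement invoked later for Proposition \ref{isos}. One cosmetic remark: your $\sigma$ should be the assignment $i\mapsto k_i$ itself rather than its inverse if you want to write $N_i\subset L_{\sigma(i)}$, but this does not affect the argument.
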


\begin{proof} 
If $\G$ is a non-abelian limit group,  and
if $\g_1$ and $\g_2$ are two non-commuting elements of $\G$,
then the centralizer $C_\G(\g_1,\g_2)$ of the pair is trivial,
by commutative-transitivity.

The collection
of centralizers of non-commuting pairs of elements of $S$
has a finite set of maximal elements, namely the centralizers
of pairs $x_i$ and  $y_i$ which are non-commuting pairs in $L_i$.
These maximal elements are exactly the $M_i$, which therefore form a 
characteristic set.
Moreover the $L_i$ are the centralizers of the $M_i$ and hence the 
set of these is  also characteristic (cf. \cite{BM1}).\end{proof}

\begin{remark} Applying the proposition with $S=D$
one sees that if $D=\G_1\times\dots\times \G_n$  is the direct product of
non-abelian limit groups,
then the set of subgroups $\G_i$ is characteristic. 
In particular, the decomposition of $D$
as a direct product of limit groups is unique.

The example $D=\Z\times F_2$ shows  that this uniqueness fails  if abelian factors are allowed.
\end{remark}

\begin{definition}\label{d:MCS} Let $S$ be a finitely presented, non-abelian residually
free group.
A finite list $(Y_i;Z_i)=(Y_1,\ldots,Y_n ; Z_1,\ldots,Z_n)$ of finite subsets of $S$
will be called a {\em maximal centralizer structure (MCS)} for $S$ 
if it has the following properties.
\begin{enumerate}
\item[MCS(1)] Each $Y_i$ contains at least two elements $x_i$ and $y_i$ 
which do not commute.
\item[MCS(2)] Each $Z_i$ contains all of the $Y_j$ with $j\neq i$.
\item[MCS(3)] For each $i$, the elements of $Z_i$ commute with the elements of $Y_i$.
(Hence the elements in $Y_i $ commute with those in $Y_j$ for all $i\neq j$.)
\item[MCS(4)] Each $Z_i$ generates a normal subgroup of $S$.
\item[MCS(5)] For each $i$, the quotient group $S/\sg{Z_i}$  admits a splitting (as an
amalgamated free product or HNN extension)
either over the trivial subgroup or over a non-normal, infinite cyclic
subgroup.
\item[MCS(6)] There is a subgroup $S_0$ of finite index in $S$ such that each 
$Y_i\subset S_0$ and $S_0/\nc{Y_1,\ldots,Y_n}$ is nilpotent of class at most
$n-2$.
\end{enumerate}

For   the  case $n=1$  we require that 
$\<Z_1\>=Z(S)$  and that $Y_1$ be the given generating set for $S$.
\end{definition}

\begin{remark} One of the basic properties of non-abelian limit groups is 
that they split as in MCS(5). Conversely, 
we shall see in
Lemma \ref{mcs-suffices}  that, in the presence of the other conditions,
MCS(5) implies the following condition:

\begin{enumerate}
\item[\ MCS$(5)'$] For each $i$, the quotient $S/\sg{Z_i}$ is a non-abelian
limit group. 
\end{enumerate}
\end{remark}

\begin{lemma}\label{mcs-exists}
Every finitely presented non-abelian residually free group possesses a maximal centralizer structure.
\end{lemma}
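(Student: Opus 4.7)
The plan is to construct the MCS directly from a neat embedding of $S$, drawing structural information from the direct factors via Proposition \ref{p:nilp}. By \cite[Corollary 19]{BMR} together with the argument in the proof of Theorem \ref{t:5.1}, fix a neat embedding $\iota:S\hookrightarrow\G_0\times\G_1\times\cdots\times\G_n$ in which $\G_0$ is abelian (possibly trivial) and each $\G_i$ ($i\ge 1$) is a non-abelian limit group, with $\iota(S)$ a full subdirect product. Non-abelianity of $S$ forces $n\ge 1$, and finite presentability combined with Theorem \ref{t:5.1} ensures that $\rho(\iota(S))\subset\G_1\times\cdots\times\G_n$ is VSP. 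Applying Proposition \ref{p:nilp} to $\rho(\iota(S))$ then yields finite-index normal subgroups $\G_i^0\triangleleft\G_i$ with $\gamma_{n-1}(\G_i^0)\subset\rho(\iota(S))\cap\G_i$, along with the finite generation of $\rho(\iota(S))\cap\G_i$ as a normal subgroup of $\rho(\iota(S))$ and of $\rho(\iota(S))\cap\prod_{j\ne i}\G_j$ as a subgroup.

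I would then transfer this information to the subgroups $L_i:=\iota(S)\cap\G_i$ and $N_i:=\iota(S)\cap(\G_0\times\prod_{j\ne i}\G_j)$ of $S$. By Lemma \ref{l:Z} we have $Z(S)=\iota(S)\cap\G_0$. The projection $\rho$ identifies $L_i$ with a subgroup of $\rho(\iota(S))\cap\G_i$ whose quotient embeds in the finite group $\G_0/Z(S)$, so $L_i$ has finite index in $\rho(\iota(S))\cap\G_i$ and we may shrink each $\G_i^0$ so that $\gamma_{n-1}(\G_i^0)\subset L_i$. In particular each $L_i$ is non-trivial, and since a non-abelian limit group is centreless (by commutative-transitivity) every non-trivial normal subgroup of $\G_i$ is non-abelian; hence each $L_i$ is non-abelian and contains a pair $a_i,b_i$ with $[a_i,b_i]\ne 1$. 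Next, $N_i$ sits in a short exact sequence with finitely generated outer terms $Z(S)$ and $\rho(\iota(S))\cap\prod_{j\ne i}\G_j$, so $N_i$ is finitely generated; and $\G_i/L_i$, being an extension of the finite group $(\rho(\iota(S))\cap\G_i)/L_i$ by the virtually nilpotent quotient $\G_i/(\rho(\iota(S))\cap\G_i)$, is itself virtually nilpotent and therefore finitely presented, so $L_i$ is finitely generated as a normal subgroup of $\G_i$ and hence also of $S$ (since conjugation by $S$ factors through $\G_i$).

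Having assembled this structure, I would take $Y_i$ to be a finite normal generating set of $L_i$ in $S$ containing $a_i$ and $b_i$, with $Y_1$ additionally augmented by a finite generating set of the abelian group $Z(S)$; and $Z_i$ to be a finite generating set of $N_i$ as a subgroup, augmented by $\bigcup_{j\ne i}Y_j$ (which lies in $N_i$ since each $L_j$ and $Z(S)$ do). Conditions MCS(1)--(5) then follow directly: (1) from $a_i,b_i\in Y_i$; (2) from the augmentation of $Z_i$; (3) because $L_i$ and $N_i$ lie in commuting direct factors and the adjoined central elements commute with everything; (4) because $\sg{Z_i}=N_i=\ker(p_i\circ\iota)$ is normal in $S$; and (5) because $S/\sg{Z_i}\cong\G_i$ is a non-abelian limit group, and the standard structure theory of limit groups \cite{Se1,KM2} furnishes a non-trivial splitting as an amalgamated free product or HNN extension over a (possibly trivial) infinite cyclic subgroup, which is non-normal because non-abelian limit groups are CSA.

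For MCS(6) I set $S_0:=\iota^{-1}(Z(S)\times\G_1^0\times\cdots\times\G_n^0)$, a finite-index subgroup of $S$; replacing elements of each $Y_i$ by suitable positive powers (which preserves MCS(1)--(5) since each $L_i$ is non-abelian) we may assume $Y_i\subset S_0$. By construction $\nc{Y_1,\ldots,Y_n}\supset Z(S)\cdot L_1\cdots L_n=Z(S)\times L_1\times\cdots\times L_n\subset S$, so $S_0/\nc{Y_1,\ldots,Y_n}$ is a quotient of $S_0/(Z(S)\times L_1\times\cdots\times L_n)$, and the latter embeds in $\prod_{i=1}^n\G_i^0/L_i$; each factor here is a quotient of $\G_i^0/\gamma_{n-1}(\G_i^0)$, nilpotent of class at most $n-2$, delivering MCS(6). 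The case $n=1$ is handled directly by the special form of the definition: take $Y_1$ to be the given generating set of $S$ and $Z_1$ a finite generating set of $Z(S)$, then verify MCS(1)--(5) as above, using Lemma \ref{l:Z} for the identification $S/\sg{Z_1}=S/Z(S)\cong\G_1$. The main technical obstacle is managing the abelian factor $\G_0$ throughout---specifically, arranging that $L_i$ (as opposed to its image in $\rho(\iota(S))$) is both non-trivial and finitely generated as a normal subgroup, and that the finite group $\G_0/Z(S)$ is absorbed into the nilpotent quotient in MCS(6)---but this reduces to careful choice of finite-index subgroups.
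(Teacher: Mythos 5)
Your strategy is essentially the paper's: realise $S$ as a neat full subdirect product of limit groups, get VSP from Theorem \ref{t:main} (i.e.\ Theorem 4.2 of \cite{BHMS1}), use Proposition \ref{p:nilp} for the lower central series containments and the finite (normal) generation of $L_i=\iota(S)\cap\G_i$ and $N_i=\ker(p_i\circ\iota)|_S$, and take $Y_i$, $Z_i$ to be the corresponding generating sets. The only organisational difference is that the paper first builds an MCS for $H=S/Z(S)$, viewed as a full subdirect product of non-abelian limit groups, and then lifts it to $S$, whereas you carry the abelian factor $\G_0$ along throughout; that difference is cosmetic, and your bookkeeping for it (finiteness of $\G_0/Z(S)$, hence of $K_i/L_i$ where $K_i=\rho(\iota(S))\cap\G_i$) is correct.

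One step fails as written, in your verification of MCS(6): you propose ``replacing elements of each $Y_i$ by suitable positive powers'' so as to force $Y_i\subset S_0$. Replacing a normal generating set of $L_i$ by proper powers need not yield a normal generating set of $L_i$ (the powers may normally generate an infinite-index normal subgroup), so after this move the containment $\nc{Y_1,\ldots,Y_n}\supset Z(S)\cdot L_1\cdots L_n$ --- which your very next sentence depends on --- is lost, and with it the nilpotency of $S_0/\nc{Y_1,\ldots,Y_n}$. The repair is easy: leave the $Y_i$ alone and enlarge $S_0$ instead, e.g.\ take $S_0=\iota^{-1}(Z(S)\times\G_1^0L_1\times\cdots\times\G_n^0L_n)$ (each $\G_i^0L_i$ is a finite-index subgroup of $\G_i$ since $L_i\triangleleft\G_i$), which contains every $Y_i$ and gives $S_0/\nc{Y_1,\ldots,Y_n}\hookrightarrow\prod_i\G_i^0/(\G_i^0\cap L_i)$, nilpotent of class at most $n-2$. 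Relatedly, your assertion that one ``may shrink each $\G_i^0$ so that $\gamma_{n-1}(\G_i^0)\subset L_i$'' is true but deserves a line of justification: $\G_i^0/(\G_i^0\cap L_i)$ is a finitely generated finite-by-(nilpotent of class $\le n-2$) group, hence virtually nilpotent and residually finite, so it has a finite-index subgroup meeting the finite image of $\gamma_{n-1}(\G_i^0)$ trivially; pulling that subgroup back gives the required shrinking. With these two repairs the argument is complete.
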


\begin{pf}
Let $S$ be a finitely presented non-abelian residually free group, and 
define $H=S/Z(S)$. We shall first construct an MCS for $H$.

As in the proof of Theorem \ref{t:main},   $H$
can be embedded as a full subdirect product in some
$D=\G_1\times\dots\times\G_n$ where the $\G_i$ are non-abelian limit groups.
Let $p_i:D\to\G_i$ denote the projection.

If $n=1$, then $H$ itself is a non-abelian limit group.  In this case, we
follow the directions in the definition of MCS: $Y_1$ is the given set of generators for
$H$, $Z_1=\{1\}$, and $H_0=H$.  Then MCS(1-4) and MCS(6) are trivially satisfied,
as is  MCS$(5)'$, hence MCS$(5)$.

From now on we assume that $n>1$.
Then $H<D$ is VSP and  $\G_i/(H\cap\G_i)$ is virtually nilpotent, by \cite[Theorem 4.2]{BHMS1}, 
so $(H\cap\G_i)$ is finitely
generated as a normal subgroup of $\G_i$.  
Choose a finite set $Y_i$ of normal generators for $H\cap \G_i$
containing at least two elements that do not commute.

Let $M_i$ denote the centralizer of $Y_i$ in $H$ (this is consistent with the notation
in Proposition \ref{charsubgp}). Note that $M_i=H\cap \ker (p_i)$, which by Proposition \ref{p:nilp}(3)
is a finitely generated subgroup
of $H$. Note that $\G_i\cong H/M_i$.  
Choose $Z_i$ to be a finite generating set for $M_i$  
containing $Y_j$ for all $j\ne i$.

This provides an MCS $(Y_i;Z_i)$ for $H$: each of the properties MCS(1-4) is explicit in the
construction, as are
MCS$(5)'$ and MCS(6).

It remains to construct an MCS  for $S$ from the one just
constructed for $H=S/Z(S)$. We know from Lemma \ref{l:Z}
that $Z(S)$ is a finitely
generated free abelian group.  
To obtain an MCS $(\hat Y_i; \hat Z_i)$ for $S$, we lift each $Y_i\subset H$ to a finite 
subset $\hat{Y}_i$
of $S$, and take a finite subset $\hat{Z}_i$ in the
preimage of each $Z_i$ 
containing (i) $\hat{Y}_j$ for all
$j\ne i$, and (ii) a finite generating set for $Z(S)$.

To see that $(\hat Y_i; \hat Z_i)$ satisfies
MCS(1), note that $Z(S)\cap [S,S]=1$. Modulo this observation,
it is clear that $(\hat Y_i; \hat Z_i)$
inherits the properties MCS(1-6) from $(Y_i;Z_i)$. 
\end{pf}

\subsection{Two useful lemmata}

The following are the two principal lemmata used in the proof of Theorem 
\ref{t:ee(S)}. 
We first prove a technical lemma about splittings
which allows us to detect when a given quotient of  
$S$ is a non-abelian limit group
rather than a direct product.

 \begin{lemma}\label{jrl}
Let $\G$ be a torsion--free group, $H$ a group,
and $G\hookrightarrow\G\times H$ a subdirect product
such that $G\cap\G$ contains a free group of rank 2.
Let $N$ be a normal subgroup of $G$ with $N<K=G\cap H$.
If $G/N$ admits a  cyclic splitting and $N\neq K$, 
then $K/N$ is cyclic and the splitting is
over $K/N$.
\end{lemma}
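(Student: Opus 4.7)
The plan is to examine the (minimal) Bass--Serre tree $T$ of the assumed cyclic splitting of $G/N$, exploiting the two normal subgroups $\bar L := LN/N$ and $\bar K := K/N$, where $L := G\cap\G$. Since $L$ and $K$ sit in complementary direct factors of $\G\times H$, they commute element-wise in $G$, and moreover $L\cap K=1$; combined with $N\subset K$ this yields $L\cap N = 1$, so $\bar L \cong L$ still contains a free subgroup of rank $2$, while $\bar L$ and $\bar K$ centralise one another in $G/N$. Both are normal in $G/N$ as images of subgroups normal in $G$.

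The first key step is to show that $\bar L$ fixes no point of $T$: if it did, its fixed subtree would be a non-empty $G/N$-invariant subtree by normality, hence equal to $T$ by minimality, forcing $\bar L$ into a cyclic edge stabiliser --- impossible since $F_2 \subset \bar L$. So $\bar L$ contains a hyperbolic element $\ell$ with axis $A_\ell \subset T$, and $\bar K$ preserves $A_\ell$ setwise because $[\bar L,\bar K]=1$. The second step rules out any $k\in\bar K$ acting hyperbolically on $T$: if $k$ had axis $A_k$, then $\bar L$ would preserve $A_k$ and induce a homomorphism $\bar L \to \mathrm{Isom}(A_k) = \mathbb{Z}\rtimes\mathbb{Z}/2$ whose image is torsion-free (since $L\subset\G$ is torsion-free) and hence cyclic, so $[\bar L,\bar L]$ would act trivially on $A_k$ and thus lie in a cyclic edge stabiliser --- impossible, as $[F_2,F_2]$ is non-cyclic. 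Consequently every element of $\bar K$ acts on $A_\ell \cong \mathbb{R}$ by elliptic isometries; since any two distinct reflections of $\mathbb{R}$ compose to a non-trivial translation, the image of $\bar K$ in $\mathrm{Isom}(A_\ell)$ has order at most $2$, and in either case admits a common fixed point on $A_\ell$, hence on $T$. As $\bar K \neq 1$ is normal in $G/N$, its non-empty fixed subtree is $G/N$-invariant and equal to $T$ by minimality, so $\bar K$ lies in the kernel of the action and therefore in every edge stabiliser; cyclicity of those stabilisers forces $K/N = \bar K$ to be cyclic.

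The remaining assertion --- that the cyclic edge group equals $\bar K$ rather than merely containing it --- I expect to be the most delicate point. My plan is to study the induced splitting of the quotient $(G/N)/\bar K$, in which the image of $\bar L$ is still normal and still contains $F_2$ (and by the first step remains non-elliptic on $T$), and to show that a cyclic edge group strictly larger than $\bar K$ would provide a non-trivial cyclic splitting of this quotient whose edge stabilisers would again have to accommodate $\bar L$'s action, contradicting the $F_2$-versus-cyclic dichotomy used above. This last identification of the edge group with $\bar K$ exactly is where I anticipate the main obstacle.
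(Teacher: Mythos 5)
Your argument up to the cyclicity of $K/N$ is sound and runs essentially parallel to the paper's: both proofs place $K/N$ inside every (cyclic) edge stabiliser by playing the commuting normal subgroups $LN/N$ and $K/N$ against the axis of a hyperbolic element, then invoking normality together with minimality (the paper uses edge-transitivity, which amounts to the same thing). One small repair: your claim that the image of $LN/N$ in the infinite dihedral isometry group of the axis $A_k$ is torsion-free "since $L\subset\G$ is torsion-free" is not justified, as quotients of torsion-free groups can have torsion. You do not need it, though: the kernel of the induced map $F_2\to D_\infty$ is a non-trivial normal subgroup of $F_2$ with virtually cyclic quotient, hence non-cyclic, and it would have to lie in a cyclic edge stabiliser, which is the desired contradiction.

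The genuine gap is exactly where you flagged it: you never prove that the edge stabiliser \emph{equals} $K/N$, and the route you sketch is unlikely to close it. Passing to $(G/N)/(K/N)\cong G/K\cong\G$ and hoping to contradict the existence of a cyclic splitting via the action of the image of $L$ cannot work in general, because $\G$ is an arbitrary torsion-free group (a limit group in the application) and may perfectly well admit a cyclic splitting on which $L$ acts hyperbolically; no contradiction is available from the $F_2$-versus-cyclic dichotomy alone. The paper's finish is short and uses precisely the two hypotheses your sketch has not yet touched. Since $K/N\ne 1$ is a subgroup of a cyclic group (the edge stabiliser), it has finite index there; hence the edge stabilisers for the induced action of $G/K\cong\G$ on $T$ are finite; and since $\G$ is torsion-free they are trivial, i.e.\ $\mathrm{Stab}(e)=K/N$ for every edge $e$. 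This is where the torsion-freeness of $\G$ and the assumption $N\ne K$ enter; without this step the lemma is not proved.
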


\begin{proof} The quotient $G/N \hookrightarrow \G\times H/N$ is a subdirect
product.

The cyclic splitting gives a $G/N$ action on a tree $T$
which is  edge-transitive and has cyclic edge-stabilisers.
A free subgroup $F = \<x,y\>$ of $G \cap \G$ either fixes a vertex
$v$ or contains an element $w$ acting hyperbolically 
(with axis $A$, say).
In the first case $v$ is unique (since $F$ cannot fix an edge), 
so $v$ is
$K/N$-invariant since $K/N$ commutes with $F$.  
But $K/N$ is normal so $K/N$ also fixes $g(v)$ for all $g\in G$.  
Pick $g$ with $g(v) \ne v$, then
$K/N$ fixes more than one vertex, and hence fixes an edge.

In the second case, the axis $A$ is $K/N$-invariant since $K/N$ commutes with $w$.  
If the action of $K/N$ on $A$ is non-trivial, then $A$ is the
(unique) minimal $K/N$--invariant subtree of $T$. 
But then $T$ is $F$-invariant since $F$ commutes with $K/N$.
Thus $F$ acts non--trivially
on $A$ with cyclic edge-stabilisers, which is impossible. 
Hence $K/N$ fixes an edge.

In both cases, $K/N$ fixes an edge, 
hence fixes all edges since $K/N$ is
normal and the action is edge-transitive.  
Thus  $K/N$ is a cyclic group acting trivially on $T$. Moreover, since $K/N\neq 1$,
it has finite index in every (cyclic) edge stabiliser.
Therefore, the action of  $\G = G/K$ on $T$ has finite
cyclic edge stabilisers of the form $Stab_G(e)/K$.  
But $\G$ is torsion-free so these stabilisers are all trivial.
\end{proof}

\begin{lemma}\label{mcs-suffices}
Suppose $S$ is a finitely presented residually free group and
 that  $(Y_1,\ldots,Y_n; Z_1,\ldots,Z_n)$ 
is an MCS for $S$.  Then:
\begin{enumerate}
\item[(0)] each of the groups $S_i/\langle Z_i\rangle$ is a non-abelian
limit group;
\item[(1)]  the natural homomorphism 
$S\to S/\sg{Z_1}\times\cdots\times S/\sg{Z_n}$  
has kernel $Z(S)$ and so embeds $S/Z(S)$ as a full subdirect product
of $n$ non-abelian limit groups;
\item[(2)]   the natural homomorphism 
$S\to \ab{\G}\times S/\sg{Z_1}\times\cdots\times S/\sg{Z_n}$
is an embedding, where $\ab{\G}=\go{S}$.
\end{enumerate}
\end{lemma}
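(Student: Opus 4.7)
\emph{Proof plan.} The plan is to compare the MCS to a canonical embedding $S\hookrightarrow\Lambda_0\times\Lambda_1\times\cdots\times\Lambda_k$ given by Theorem \ref{t:main}, with $\Lambda_0$ abelian, each $\Lambda_j$ ($j\ge 1$) a non-abelian limit group, and the image subdirect and VSP. Write $p_j$ for the projections, $\bar S=S/Z(S)$, $K_j=\ker p_j\cap S$, $L_j=\bar S\cap\Lambda_j$, $M_j=\bar S\cap\ker p_j$ (so $M_j = K_j/Z(S)$). Then Proposition \ref{p:nilp}(1) gives $S\cap\Lambda_j\supseteq F_2$; Lemma \ref{l:Z}(2) gives $Z(S)\subseteq K_j$; Proposition \ref{charsubgp} makes $\{M_j\}$ and $\{L_j\}$ characteristic in $\bar S$; VSP makes $p_j(M_{j'})$ finite-index in $\Lambda_j$ for $j\ne j'$; and commutative-transitivity in each $\Lambda_j$ then identifies $C_{\bar S}(M_{j'})=L_{j'}$.

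For each $i$, let $x_i,y_i\in Y_i$ be the non-commuting pair of MCS(1) and put $J_i=\{\,j\ge 1:[p_j(x_i),p_j(y_i)]\ne 1\,\}$, which is non-empty (as $\Lambda_0$ is abelian). MCS(3) makes every element of $Z_i$ commute with $x_i$ and $y_i$, so commutative-transitivity in each $\Lambda_j$ with $j\in J_i$ forces $p_j(Z_i)=1$, and hence $\langle Z_i\rangle\subseteq K_j$. To upgrade this to equality I apply Lemma \ref{jrl} with $G=S$, $\G=\Lambda_{j_0}$ (any fixed $j_0\in J_i$), $H=\Lambda_0\times\prod_{j\ne j_0,\,j\ge 1}\Lambda_j$, $N=\langle Z_i\rangle$ and $K=K_{j_0}$: the hypotheses hold (torsion-freeness, subdirectness, $S\cap\Lambda_{j_0}\supseteq F_2$, $N\triangleleft G$, $N\subseteq K$), and MCS(5) supplies the cyclic splitting of $G/N=S/\langle Z_i\rangle$. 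If $\langle Z_i\rangle\ne K_{j_0}$, the lemma would force the splitting to be over the cyclic subgroup $K_{j_0}/\langle Z_i\rangle$, which is normal in $S/\langle Z_i\rangle$, contradicting MCS(5)'s requirement that the edge group be trivial or \emph{non-normal}. Therefore $\langle Z_i\rangle=K_{j_0}$ for every $j_0\in J_i$; since the $K_j$ are pairwise distinct, $J_i$ is a singleton $\{j(i)\}$, and $S/\langle Z_i\rangle\cong\Lambda_{j(i)}$ is a non-abelian limit group. (The case $n=1$ is subsumed: $\langle Z_1\rangle=Z(S)$ forces $K_{j(1)}=Z(S)$, so $M_{j(1)}=1$, which can happen only when $k=1$.) This proves (0).

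To establish (1), I show that $i\mapsto j(i)$ is a bijection. Injectivity: if $j(i)=j(i')$ with $i\ne i'$, MCS(2) puts $Y_{i'}\subseteq\langle Z_i\rangle=K_{j(i')}$, contradicting $p_{j(i')}(x_{i'})\ne 1$. Surjectivity: passing to $\bar S$, MCS(3) gives that the image $\bar Y_i$ of $Y_i$ lies in $C_{\bar S}(M_{j(i)})=L_{j(i)}$, so $p_{j_0}(Y_i)=1$ whenever $j_0\ne j(i)$; if some $j_0\ge 1$ were missing from $\mathrm{image}(j)$, then $p_{j_0}$ would annihilate every $Y_i$, and $p_{j_0}|_{S_0}:S_0\to\Lambda_{j_0}$ would factor through the virtually nilpotent-of-class-$\le n-2$ quotient from MCS(6), contradicting the non-nilpotence of finite-index subgroups of the non-abelian limit group $\Lambda_{j_0}$. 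Hence $j$ is a bijection, giving $\bigcap_i\langle Z_i\rangle=\bigcap_{j\ge 1}K_j=Z(S)$, so $\ker(S\to\prod_i S/\langle Z_i\rangle)=Z(S)$; fullness of the image is immediate since by MCS(2) we have $x_i\in\bigcap_{j\ne i}\langle Z_j\rangle$, while $x_i\notin\langle Z_i\rangle$ (else $x_i\in\langle Z_i\rangle$ would centralize $y_i$ by MCS(3), contradicting MCS(1)). Part (2) follows from (1) together with Lemma \ref{l:Z}(1). The main technical obstacle is the combined use of Lemma \ref{jrl} and the non-normality clause of MCS(5) to pin each $\langle Z_i\rangle$ exactly at $K_{j(i)}$; the remaining identifications then follow by careful bookkeeping with Propositions \ref{p:nilp} and \ref{charsubgp}.
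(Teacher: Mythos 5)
Your proof is correct, and for part (0) it is essentially the paper's argument: you place $S$ as a subdirect product of $\G\times H$ with the non-commuting pair $x_i,y_i$ visible in a non-abelian factor, use MCS(3) and commutative transitivity to get $\sg{Z_i}\le K:=S\cap H$, and then combine Lemma \ref{jrl} with the non-normality clause of MCS(5) to force $\sg{Z_i}=K$; the only cosmetic difference is that you run this against one fixed neat VSP embedding supplied by Theorem \ref{t:main} rather than an arbitrary subdirect embedding. Where you genuinely diverge is in proving $\bigcap_i\sg{Z_i}=Z(S)$. The paper's argument is internal to $S$: it first observes that each $\sg{Z_i}$ equals $C_S(Y_i)$ (so $\nc{Y_i}$ centralizes it), takes a non-central $u$ in the intersection, produces the rank-two free subgroup $\<u,v^{-1}uv\>$ inside $\bigcap_i\sg{Z_i}$, and notes that this free subgroup must inject into the class-$(n-2)$ nilpotent quotient of MCS(6) --- a contradiction. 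You instead prove that $i\mapsto j(i)$ is a bijection onto the non-abelian factors of the reference embedding (MCS(2) for injectivity; MCS(6) plus VSP and commutative transitivity for surjectivity) and then read off $\bigcap_i\sg{Z_i}=\bigcap_{j\ge 1}K_j=S\cap\Lambda_0=Z(S)$. Your route is longer and leans on the existence of a neat VSP embedding, but it buys more: it shows that the quotients $S/\sg{Z_i}$ are exactly the coordinate projections of any full subdirect embedding, which is in effect the uniqueness statement of Theorem \ref{t:ee(S)}(4) proved later in the paper; the paper's argument is shorter and self-contained. Your verification of fullness of the image (via $x_i\in\bigcap_{j\ne i}\sg{Z_j}\smallsetminus\sg{Z_i}$) is a point the paper leaves implicit, and your handling of part (2) via Lemma \ref{l:Z}(1) matches the paper's.
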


\begin{definition} \label{d:env} To obtain the {\em{reduced existential envelope}} of $S$ 
we fix an MCS $(Y_1,\ldots,Y_n; Z_1,\ldots,Z_n)$
and define $\eer{S}:=S/\sg{Z_1}\times\cdots\times S/\sg{Z_n}$. The
existential envelope of $S$ is then defined  to be 
$\ee{S}= \ab{\G}\times \eer{S}$, where
$\ab{\G}=\go{S}$.
\end{definition}
\begin{remark}
The above definition makes sense in the light of
Lemma \ref{mcs-suffices} and Lemma \ref{mcs-exists}. In the proof of
Lemma \ref{mcs-exists}, we chose the $Z_i$ so that
$M_i=\<Z_i\>$, in the notation of Proposition \ref{charsubgp}, and
we shall see in a moment that this equality is forced by the definition of
an MCS alone. 
The canonical nature of the $M_i$ makes envelopes 
more canonical than they appear in the definition --- Theorem \ref{t:ee(S)}(4-5)
makes this assertion precise. 
\end{remark}

\smallskip

\noindent{\em{Proof of Lemma \ref{mcs-suffices}}}
Suppose that
$(Y_1,\ldots,Y_n; Z_1,\ldots,Z_n)$ is an MCS for the finitely
presented residually free group $S$.
Then by MCS(3) we know $\sg{Z_i}\subseteq C_S(Y_i)$.  
Now there are $x_i,y_i\in Y_i$ such that $[x_i,y_i]\neq_S 1$.
Moreover $[x_i,y_i]\notin C_S(Y_i)$ because $S$ is residually free. 
Hence the images of $x_i$ and $y_i$ in $S/\sg{Z_i}$ form a non-commuting
pair.  Writing $S$ as a subdirect product of some collection
$\G_1,\dots,\G_n$ of limit groups, the projections of $x_i$ and $y_i$
into one of the factors $\G_j$, say, do not commute.
Now we see that $S$ is a subdirect product of $\G\times H$, where $\G=\G_j$ is
a non-abelian limit group, $H$ is a subdirect product of the $\G_i$ ($i\ne j$),
 and $Z_i\subset H$ (by commutative transitivity in $\G$).

Now put $N=\sg{Z_i}\triangleleft S$ (by MCS(4)), and note that $N\subset K:=S\cap H$.
It follows from MCS(5) that $S/N$  
admits a splitting
either over the trivial subgroup or a non-normal, infinite cyclic
subgroup.
Then by Lemma \ref{jrl}, if $K\ne N$, then the splitting is over $K/N$ -
a contradiction since $K/N$ is normal in $S/N$.

Hence  $\sg{Z_i}=N=K=S\cap H$, so
$S/\sg{Z_i}\cong\G$ is a non-abelian limit group, which proves (0).

Since limit groups are fully residually free, 
the centralizer of any non-commuting pair of elements in $S/\sg{Z_i}$ is trivial. Thus $\sg{Z_i}$
is maximal among the centralizers of non-commuting pairs of elements of $S$
(cf.~Proposition \ref{charsubgp}).
In particular $\sg{Z_i} = C_S(Y_i)$  and $\nc{Y_i}\subseteq C_S(\sg{Z_i})$.
Clearly each $\sg{Z_i}\supseteq Z(S)$.

Suppose now that $1\neq  u\in \sg{Z_1}\cap\cdots\cap \sg{Z_n}$ but $u\notin Z(S)$.
Then there is some other element $v$ with $[u,v]\neq 1$.
Since $S$ is residually free, $u$ and $v$ freely generate a free subgroup 
of rank 2.  Thus $u$ and $v^{-1}uv$ freely generate a free subgroup of
$\sg{Z_1}\cap\cdots\cap \sg{Z_n}$ which centralizes each $\nc{Y_i}$.  
So their images in $S/\nc{Y_1,\ldots,Y_n}$ freely generate a free 
subgroup which contradicts MCS(6).
Thus $\sg{Z_1}\cap\cdots\cap \sg{Z_n} = Z(S)$.  This proves (1).

The existence of the embedding in (2) follows immediately from (1), in
the light of Lemma \ref{l:Z}.
\qed

\subsection{Proofs of Theorem \ref{t:ee(S)}(1) and \ref{t:ee(S)}(2)}\label{ss:A12}
   
We are given a finite presentation $\langle A\mid R\rangle$ for a residually free group
$S$. In order to prove Theorem \ref{t:A1}, we must describe an
algorithm that will construct an MCS for $S$ from this presentation:
we know by Lemma \ref{mcs-exists} that $S$ has an MCS and
we know from Lemma \ref{mcs-suffices} (and Definition \ref{d:env})
how to embed $S$ in its envelopes once an MCS is constructed. 

We shall repeatedly use the fact that one can use the given presentation
of $S$ to solve the word problem explicitly: one enumerates homomorphisms
from $S$ to the free group of rank 2 by choosing putative images for the
generators $a\in A$, checking that each of the relations $r\in R$ is mapped
to a word that freely reduces to the empty word; if a word $w$ in the
letters $A^{\pm 1}$ is non-trivial is $S$, one will be able to see this in one of the
free quotients enumerated, since $S$ is residually free. (Implementing a
naive search that verifies if
$w$ does equal the identity is a triviality in any recursively presented group.)

Using this solution to the word problem, we can recursively enumerate
all finite collections 
$\Delta = (Y_1,\ldots,Y_n; Z_1,\ldots,Z_n)$ of finite subsets
of $S$ satisfying conditions MCS(1), MCS(2) and MCS(3).  Next we enumerate all equations
in $S$ and look for those of the form $a^{-1}z a=_S w(Z_i)$  where $z\in Z_i$
and $a^{\pm 1}$ is a  generator of $S$ (and $w$ any word on $Z_i$). 
If a given $\Delta$ satisfies MCS(4), we will eventually discover this by checking the
list of equations. (As ever with such processes, one runs through the finite diagonals
of an array, checking all equations against all choices of $\Delta$.)
Thus we obtain an enumeration of those $\Delta$ satisfying
MCS(1-4). 

Next, we must describe a process that,
given $$\Delta= (Y_1,\ldots,Y_n; Z_1,\ldots,Z_n),$$ can determine if it satisfies MCS(5),
i.e.~if each of the groups $S/\langle Z_i\rangle$ has a splitting of the
required form. Again we only need a
 process that will terminate if $\Delta$ does indeed satisfy MCS(5) --- we are content
for it not to terminate if MCS(5) is not satisfied. 

We have a finite presentation $\langle A\mid R, Z_i\rangle$  for $S/\langle Z_i\rangle$.
By applying Tietze moves (or searching naively for inverse pairs of isomorphisms) we can
enumerate finite presentations of $S/\langle Z_i\rangle$ that have one of the following two  forms
$$
\langle A_1, A_2 \mid R_1, R_2, u_1u_2\rangle,\ \ 
\langle A_1,t \mid R_1, \, tu_1t^{-1}v\rangle,
$$
where $A_1,\, A_2$ and $\{t\}$ are disjoint sets, $R_i\cup\{u_i\}$ is a set of words in the letters $A_i^{\pm 1}$,
and $v$ is a word in the letters $A_1^{\pm 1}$ .
These are the standard forms of presentation for groups that split over (possibly trivial
or finite) cyclic groups. When we find such a presentation, we can use the solution to
the word problem in $S$ to determine if at least one of the generators from $A_1$
and (for the first form) one from $A_2$ are non-trivial in $S$. We proceed to the
next stage of the argument only if non-trivial elements are found.
In the next stage, we use the solution to the word problem to check if $u_1=u_2=1$ in $S$
(or $u_1=v=1$). If these equalities hold, we have found the desired splitting over the
trivial group. If not, then we have a splitting over a non-trivial cyclic group, and since $S$ is
torsion-free, this cyclic group $C=\langle u_1\rangle$
must be infinite. In a residually free group, each 2-generator
subgroup is free of rank 1 or 2 (consider the image of $[x,y]$ in a free group). 
Thus $C$ is normal if and only if it is central, and this can be determined by applying
the solution of the word problem to all commutators $[u,a]$ with $a\in A_1\cup A_2$
(resp. $a\in A_1$). In the case of amalgamated free products, we require
that there be a generator in each of $A_1$ and $A_2$ that does not commute with $C$,
in order that the splitting be non-degenerate.
This concludes the description of the process that will correctly determine if a given
 $\Delta= (Y_1,\ldots,Y_n; Z_1,\ldots,Z_n)$ satisfies MCS(5), halting if it does (but
not necessarily halting if it does not).

Finally, we use coset enumeration to get presentations $\langle A'\mid R'\rangle$
of subgroups of finite
index $S_0\subset S$ with $Y_i\subset S_0$, and we enumerate equations in
the quotients $\langle A'\mid R', Y_1,\dots,Y_n\rangle$ 
to see if the generators satisfy the defining relations of 
the free nilpotent group of class $n-2$ on $|A'|$ generators (and we need only
look for a positive answer). 
As an MCS for $S$ exists (Lemma \ref{mcs-exists}) this 
process will eventually terminate, yielding an explicit $\Delta$ satisfying MCS(1-6).

Part  (2) of Theorem \ref{t:ee(S)} follows immediately from part 1 in the light
of Proposition \ref{p:nilp}.
\qed

\subsection{Proof of Theorem \ref{t:ee(S)}(3) [the universal property of $\eer{S}$]}\label{EmbFpRfGp}

We first record the following 
general result which is also used implicitly in our discussion of 
how $\ee{S}$ is related to the Makanin-Razborov diagram
of $S$.

\begin{prop}\label{factorisation}
Let $G$ be a subdirect product of a finite collection of 
groups:
$G<G_1\times\cdots\times G_n$.  Then any homomorphism from $G$ onto
a non-abelian limit group $\G$ factors through one of the projection
maps $p_i:G\to G_i$ ($i=1,\dots,n$).
\end{prop}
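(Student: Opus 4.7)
The plan is to proceed by induction on $n$, with the case $n=1$ trivial (the subdirect condition forces $G=G_1$). For the inductive step, write $M_i=G\cap G_i$, viewing $G_i$ as the subgroup $\{1\}\times\cdots\times G_i\times\cdots\times\{1\}$ of the product. The key structural facts I will use are that each $M_i$ is \emph{normal} in $G$ (since it is the intersection of $G$ with a normal subgroup of $G_1\times\cdots\times G_n$) and that $M_i$ and $M_j$ \emph{commute elementwise} for $i\neq j$ (their non-trivial coordinates lie in disjoint positions).

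The main dichotomy is whether some $M_i$ is killed by $\phi$. If $\phi(M_i)=\{1\}$ for some $i$, then I would show that $\phi$ factors through the projection $q_i:G\to G_1\times\cdots\times\widehat{G_i}\times\cdots\times G_n$: indeed $\ker(q_i|_G)=M_i\subset\ker\phi$. The image $q_i(G)$ is again a subdirect product of $n-1$ factors, so by the inductive hypothesis the induced map factors through some projection $\bar{p}_j:q_i(G)\to G_j$ with $j\neq i$; composing with $q_i$ gives the required factorisation $\phi=\bar{p}_j\circ q_i= p_j$ through one of the original projections.

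It remains to rule out the case where $\phi(M_i)\neq\{1\}$ for every $i$; this is the part I expect to be the main obstacle, and it is where the hypothesis that $\G$ is a \emph{non-abelian limit group} is essential. Pick any non-trivial $a\in\phi(M_1)$. Since the $M_i$ pairwise commute in $G$, the images $\phi(M_2),\dots,\phi(M_n)$ lie in the centraliser $C_\G(a)$. Because $\G$ is commutative-transitive (limit groups being commutative-transitive is used throughout the paper, e.g.\ in Lemma \ref{l:Z}), $C_\G(a)$ is abelian; in particular $\phi(M_2)$ is abelian. But $\phi(M_2)$ is also normal in $\G=\phi(G)$, so this provides a non-trivial normal abelian subgroup of $\G$.

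To derive a contradiction, I will invoke the fact that a non-abelian limit group is CSA, so in particular contains no non-trivial normal abelian subgroup: if $A\triangleleft\G$ is abelian and non-trivial, pick $a\in A\ssm\{1\}$; its centraliser $C_\G(a)$ is a malnormal maximal abelian subgroup, while normality of $A$ forces $gag^{-1}\in A\subset C_\G(a)$ for every $g\in\G$, so $gC_\G(a)g^{-1}\cap C_\G(a)$ contains the non-trivial element $gag^{-1}$, whence malnormality gives $g\in C_\G(a)$; this makes $\G$ abelian, contrary to hypothesis. This contradicts $\phi(M_2)\neq\{1\}$ and completes the induction.
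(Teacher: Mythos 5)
Your proof is correct and follows essentially the same route as the paper's: the heart of both arguments is that the images $\phi(G\cap G_i)$ are mutually commuting normal subgroups of $\G$, so commutative transitivity plus the absence of non-trivial normal abelian subgroups in a non-abelian limit group forces some $\phi(G\cap G_i)$ to vanish, whence $\phi$ factors through the complementary projection and induction finishes. The only cosmetic differences are that the paper first reduces to $n=2$ rather than running the dichotomy for general $n$, and that you spell out (via the CSA property) the fact that a non-abelian limit group has no non-trivial normal abelian subgroup, which the paper simply cites.
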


\begin{proof}
An easy induction reduces us to the case where $n=2$.

Define $L_i:=G\cap G_i$ for $i=1,2$.  Then $L_i$ is normal in $G$
for each $i$.   Suppose that $\G$ is a non-abelian limit group and
$\phi:G\to\G$ is an epimorphism.   Then $\phi(L_1)$ and $\phi(L_2)$
are mutually commuting normal subgroups of $\phi(G)=\G$.  If (say)
$\phi(L_1)$ is non-trivial in $\G$, then commutative transitivity in
$\G$ implies that $\phi(L_2)$ is abelian.  But $\G$ has no non-trivial
abelian normal subgroups, so $\phi(L_2)$ is trivial.

Hence one or both of $\phi(L_i)$ ($i=1,2$) is trivial.  But if $\phi(L_1)$
is trivial, then $\phi$ factors through $p_2$, while  if $\phi(L_2)$
is trivial, then $\phi$ factors through $p_1$. 
\end{proof}

To prove Theorem \ref{t:ee(S)}(3), let $S$ be a finitely presented, non-abelian, residually free group 
with MCS $(Y_1,\dots,Y_n;Z_1,\dots,Z_n)$. We have
$\rho:S\to\eer{S}=S/\sg{Z_1}\times\cdots\times S/\sg{Z_n}$,
and we are given a homomorphism $\phi:S\to D=\Lambda_1\times\cdots\times\Lambda_m$ with
the $\Lambda_i$ non-abelian limit groups and $\phi(S)$ subdirect.
We must prove that there is a unique
homomorphism $\hat\phi:\eer{S}\to D$ with $\hat\phi\circ\rho=\phi$. 

For $k=1,\dots,m$ let $\phi_k$ denote the composition of
$\phi$ with the projection $D\to \Lambda_k$.  Since $\Lambda_k$ is a non-abelian
limit group, Proposition \ref{factorisation} says that the surjective map $\phi_k:S\to\Lambda_k$
factors through the projection  $S\to S/\<Z_i\>$ for some $i$.
In particular, $\phi_k(Y_j)=1$ for each $j\ne i$, since $Y_j\subset Z_i$.
However, we must have $\phi_k(Y_i)\neq \{1\}$
by MCS(6) (else $\Lambda$ is virtually nilpotent).
Thus $i=i(k)$ is uniquely determined by $k$.

Applying the above in turn to each $\phi_k$ yields a unique $i(k)$ such 
that $\phi_k$ factors through a map $\zeta_k: S/\<Z_{i(k)}\>\to \Lambda_k$.
Putting all these maps together produces the required 
$\hat\phi: \eer{S}\to\Lambda_1\times\dots\times\Lambda_m$.
\hfill $\square$

\subsection{Proof of Theorem \ref{t:ee(S)}(4) [the uniqueness of $\eer{S}$]}

We are assuming that $\phi:S\hookrightarrow D=\Lambda_1\times\dots\times\Lambda_m$
is a full subdirect product of non-abelian limit groups, and we must prove
that $\hat\phi :\eer{S}\to D$ is an isomorphism.

As in the proof of Lemma \ref{mcs-exists}, we can construct an MCS  for
$S$ from the embedding $\phi:S\hookrightarrow D$,
say $(Y_1',\dots,Y_m';Z_1',
\dots,Z_m')$. Here, $Y_i'\subset S$ generates $\phi(S)\cap \Lambda_i$
as a normal subgroup, $Z_i'$ generates the centralizer of $Y_i'$ in $S$, and
$\phi$ induces an isomorphism
$\overline\phi_i: S/\langle Z_i'\rangle\to \Lambda_i$ for $i=1,\dots,m$.

By using $(Y_i';Z_i')$ in place of $(Y_i;Z_i)$ in Definition \ref{d:env}
we obtain an alternative model $\eer{S}' = S/\langle Z_1'\rangle
\times\dots\times S/\langle Z_m'\rangle$ for $\eer{S}$,
and we have an isomorphism 
$\Phi=(\overline\phi_1,\dots,
\overline\phi_m) :  \eer{S}'\to D$ that restricts to $\phi$ on the
canonical image of $S$ in $\eer{S}'$.

In proving
Theorem \ref{t:ee(S)}(3)  we 
established the universal property for $\eer{S}' $. We apply this
to obtain a unique
homomorphism $\alpha : \eer{S}'\to \eer{S}$ extending 
the inclusion $S\hookrightarrow  \eer{S}$. 
Thus we obtain a homomorphism $\alpha\circ\Phi^{-1}:D
\to \eer{S}$ such that $\alpha\circ\Phi^{-1}\circ\phi $ is the
identity on $S$. But this means that 
$\alpha\circ\Phi^{-1}\circ\hat\phi :\eer{S}\to\eer{S}$ extends
${\rm{id}} : S\to S$. The identity map of $\eer{S}$ is also such
an extension, so
by the uniqueness assertion in \ref{t:ee(S)}(3) we have
that $\alpha\circ\Phi^{-1}$ is a left-inverse to $\tilde\phi$. 
By reversing the roles of $\eer{S}$ and $\eer{S}'$ we see that 
it is also a right-inverse.
\hfill $\square$

\medskip

\subsection{Makanin-Razborov Diagrams.}

We explain how existential envelopes are related to Makanin-Razborov diagrams.

The {\em Makanin-Razborov diagram} (or MR diagram)
of a finitely generated group
$G$ is a method of encoding the collection of all epimorphisms from
$G$ to free groups.  The name arises from the fact that these diagrams 
originate from the fundamental work of Makanin \cite{Mak}
and later Razborov \cite{R}
on the solution sets of systems of equations in free groups.

The MR diagram of $G$ consists of a finite rooted tree, where the root
is labelled by $G$ and the other vertices are labelled by limit groups,
with the leaves being labelled by free groups.
The edges are labelled by proper epimorphisms -- the epimorphism labelling
$e=(u,v)$ mapping the group labeling $u$ onto the group labelling $v$.

The basic property of this diagram is that each epimorphism from
$G$ onto a free group can be described using a directed path in this
graph from the root to some leaf, the epimorphism in question being
a composite of all the labelling epimorphisms of edges on this path,
interspersed with suitable choices of `modular' automorphisms of the 
intermediate limit groups that label the vertices.  Details can be
found  in \cite[Section 7]{Se1} and, in different language, \cite[Section 8]{KMeffective}.

An immediate observation is that any epimorphism from $G$ onto a free
group factors through the canonical quotient $G/\FR(G)$, where $\FR(G)$
is the {\em free residual} of $G$, namely the intersection
of the kernels of all epimorphisms from $G$ to free groups.
Thus the MR diagrams of $G$ and of $G/\FR(G)$ are identical.

Observe that $\FR(G/\FR(G))=1$; in other words $G/\FR(G)$
is {\em residually free}.  Thus, when studying MR diagrams for
finitely generated groups, it is sufficient to restrict attention
to the case of residually free groups.

For finitely generated residually free $G$, 
the top layer of the Makanin-Razborov diagram consists of the
set of maximal limit-group quotients of $G$. These are
the factors of our existential envelope $\ee{G}$, namely the 
maximal free abelian quotient $\G_{\rm ab}(G)$ and the non-abelian
quotients $\G_1,\dots,\G_n$.  The fact that one can construct this effectively
was proved by Kharlampovich and Myasnikov in \cite[Corollary 3.3]{KMeffective}.
Indeed, their construction will
construct for {\em any} finitely presented $G$, the embedding of $G/FR(G)$
into its envelope. Our construction of the
embedding $G\hookrightarrow \ee{G}$ is of a quite different nature, and it
works only when $G$ is residually free.  Nevertheless
we feel that there is considerable benefit in its explicit description.
It is also worth noting that neither the construction of our algorithm nor
the proof that it terminates relies on the original results of Makanin and Razborov.

\section{Decision problems}\label{s:decide}

Theorem \ref{t:ee(S)} provides considerable effective
control over the finitely presented  
residually free groups. In this section we use this effectiveness to solve the
multiple conjugacy problem for these groups and the 
membership problem for their finitely presented subgroups. 
Both of these problems are unsolvable in the finitely
generated case, indeed there exist finitely generated
subgroups of a direct product of two free groups for which
the conjugacy and membership problems are unsolvable \cite{cfm-thesis}.

\subsection{The conjugacy problem}

Instead of considering the conjugacy problem for individual
elements, we consider the multiple conjugacy problem, since the proof
that this is solvable is no harder. The multiple conjugacy problem
for a finitely generated group $G$ asks if there is an algorithm
that, given an integer $l$ and
two $l$-tuples of elements of $G$ (as words in the generators), say $x=(x_1,\dots,x_l)$ and
$y=(y_1,\dots,y_l)$, can determine if there exists $g\in G$ such
that $gx_ig^{-1}=y_i$ in $G$, for $i=1,\dots,l$. There exist groups in which
the conjugacy problem is solvable but the multiple conjugacy problem
is not \cite{BH}.

The scheme of our solution to the conjugacy problem uses an
argument from \cite{BM} that is based on
Theorem 3.1 of \cite{mb-haef}. 
This is phrased in terms of
bicombable groups. Recall that a group $G$ with finite generating
set $A$ is said to be {\em bicombable} if there is a constant
$K$ and choice
of words $\{\sigma(g) \mid g\in G\}$ in the letters $A^{\pm 1}$ such
that
$$
d(a.\sigma(a^{-1}ga')_t, \,\sigma(g)_t) \le K
$$
for all $a,a'\in A$ and $g\in G$, where $w_t$ denotes the image in $G$ of
the prefix of length $t$ in $w$, and $d$ is the word metric associated to $A$. 

We shall only use three facts about bicombable groups. First, the
fundamental groups of compact non-positively curved spaces
are the prototypical bicombable groups, and limit groups
are such fundamental groups \cite{AB}. Secondly, there
is  an algorithm that given any finite set $X\subset \G$
as words in the generators of $G$ will
calculate a finite generating set for the centralizer of $X$. (This is 
proved in \cite{mb-haef} using an argument from \cite{GS}.)
Finally, we need the fact that the multiple conjugacy
problem is solvable in bicombable groups. The proof of this is a mild
variation on the standard proof that bicombable groups have a
solvable conjugacy problem. The key point to observe is that,
given words $u$ and $v$ in the generators, if $g\in G$ is
such that $g^{-1}ug=v$, then as $t$ varies, the distance from $1$
to $\sigma(g)_t^{-1}u\sigma(g)_t$ never exceeds $K\max\{|u|,|v|\}$.
It follows that in order to check if two $(u_1,\dots,u_k)$
and $(v_1,\dots,v_k)$  are conjugate
in $G$, one need only check if they are conjugated by an element $g$
with $d(1,g)\le |2A|^{K\max\{|u_i|,\,|v_i|\}}$
(cf.~Algorithm 1.11 on p.~446 of \cite{BHa}).

\begin{proposition}\label{solvC} 
Let $\G$ be a bicombable group, let $H\subset \G$ be a 
subgroup, and suppose that there exists
a subgroup $L\subset H$ normal in $\G$ such that 
$\G/L$ is nilpotent. Then $H$ has a solvable multiple conjugacy
problem.
\end{proposition}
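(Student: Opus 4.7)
The plan is to reduce the multiple conjugacy problem in $H$ to a membership question in the finitely generated nilpotent group $\G/L$, exploiting the three facts about bicombable groups recalled just before the statement.

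Given tuples $u=(u_1,\dots,u_l)$ and $v=(v_1,\dots,v_l)$ of elements of $H$, I first apply the solution of the multiple conjugacy problem in the bicombable group $\G$. Either this concludes that $u$ and $v$ are not conjugate in $\G$ (in which case they cannot be conjugate in $H$ either), or it produces an explicit $g\in\G$ with $g u_i g^{-1}=v_i$ for every $i$. Next, using the algorithm of \cite{mb-haef}, I compute a finite generating set for the centralizer $C:=C_\G(u_1,\dots,u_l)$. The set of all conjugators in $\G$ is the coset $gC$, so $u$ and $v$ are conjugate in $H$ if and only if $gC\cap H\neq\emptyset$, equivalently $g\in H\cdot C$. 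Since $L\triangleleft\G$ and $L\subset H$, we have $HCL=HLC=HC$; consequently $p^{-1}\bigl(p(H)\cdot p(C)\bigr)=HC$, where $p\colon\G\to\G/L$ is the projection. Therefore $g\in HC$ if and only if $p(g)\in p(H)\cdot p(C)$ in $\G/L$. The subgroups $p(H)$ and $p(C)$ are finitely generated (from a finite generating set for $H$ and the computed generators for $C$), and $p(g)$ is given explicitly.

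I am thus reduced to deciding membership of a specified element in the product of two finitely generated subgroups of the finitely generated nilpotent group $\G/L$, and this is the principal technical point. Finitely generated nilpotent groups are polycyclic, and for polycyclic groups the double coset membership problem for finitely generated subgroups is classically known to be decidable; the question whether $p(g)\in p(H)\cdot p(C)$ is the special case in which the double coset representative is trivial. The cleanest route is to cite this decidability result from the standard theory of polycyclic groups (as in Segal's monograph), but one can alternatively prove it directly by induction on the Hirsch length of $\G/L$, using that images and intersections of finitely generated subgroups are algorithmically computable and that cosets of finitely generated subgroups are effectively enumerable in a polycyclic group. Combining these ingredients yields the required algorithm; this last polycyclic decidability step is the main obstacle to writing out a fully self-contained proof.
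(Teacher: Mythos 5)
Your argument is correct and follows essentially the same route as the paper: reduce to whether the coset $gC$ of the computed centralizer meets $H$, then push the question down to the nilpotent quotient $\G/L$ using $L\subset H$. The only difference is cosmetic: the final decidability step in $\G/L$ (whether $p(g)p(C)$ meets $p(H)$, equivalently $p(g)\in p(H)\cdot p(C)$) is exactly what the paper's Lemma \ref{Lo} (Lo's coset-intersection algorithm for finitely generated nilpotent groups) supplies, so you need not appeal to the general polycyclic theory or run an induction on Hirsch length.
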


\begin{proof}
Given a positive integer $l$
and two $l$-tuples $\underline x,\underline
y$ from $H$ (as lists of words in the generators of $\G$)
we use the positive solution to the multiple conjugacy problem in
$\G$ to determine if there exists $\gamma\in\G$ such that 
$\g x_i\g^{-1}=y_i$ for $i=1,\dots,l$. If no
such $\g$ exists, we stop and declare that $\underline x$
and $\underline y$ are not conjugate in $H$.
If $\g$ does exist then we find it and consider
$$\g C=\{g\in \Gamma\mid gx_ig^{-1}=y_i\text{ for }i=1,\dots,l\},$$
where $C$ is the centralizer of $\underline x$ in $\G$.
Note that $\underline x$ is conjugate to $\underline y$ in $H$ if and only
if $\g C\cap H$ is non-empty.

We noted above that there is an
algorithm that computes a finite generating
set   for $C$. This enables us to employ Lo's algorithm
(Lemma \ref{Lo}) in the nilpotent group $\G/L$ 
to determine if the image of $\gamma C$ intersects the
image of $H$. Since $L\subset H$, this intersection is
non-trivial (and hence $x$ is conjugate to $y$) if and
only if $\g C\cap H$ is non-empty. 
\end{proof}

A group $G$ is said to have {\em unique roots} if for all
$x,y\in G$ and $n\neq 0$ one has $x=y\ \iff\  x^n=y^n$.
It is easy to see that residually free groups have this property.
As in Lemma 5.3 of \cite{BM} we have:

\begin{lemma}\label{findex} Suppose $G$ is a group in which roots are
unique and  $H\subset G$ is a subgroup of finite index.
If the multiple conjugacy problem for $H$  is solvable, then
the multiple conjugacy problem for $G$ is solvable.
\end{lemma}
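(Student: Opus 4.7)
The plan is to reduce the multiple conjugacy problem in $G$ to that in $H$ in two steps: first use unique roots in $G$ to replace the given tuples $\underline{x},\underline{y}$ by appropriate powers that lie inside $H$, and then enumerate finitely many coset representatives to reduce $G$-conjugacy of those powers to $H$-conjugacy.

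For the first step, let $N$ denote the normal core of $H$ in $G$ (the intersection of all $G$-conjugates of $H$); this is a normal subgroup of finite index contained in $H$, and for $m:=[G:N]$ one has $g^m\in N\subseteq H$ for every $g\in G$. In particular, writing $\underline{u}^m:=(u_1^m,\dots,u_l^m)$ for a tuple $\underline{u}$, the tuples $\underline{x}^m$ and $\underline{y}^m$ lie in $H^l$. The crucial observation---the only place where the hypothesis on \emph{unique roots} intervenes---is that $\underline{x}$ and $\underline{y}$ are $G$-conjugate if and only if $\underline{x}^m$ and $\underline{y}^m$ are $G$-conjugate. The forward implication is immediate. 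Conversely, if $g\underline{x}^m g^{-1}=\underline{y}^m$, then $(gx_ig^{-1})^m=y_i^m$ for each $i$, and unique roots force $gx_ig^{-1}=y_i$ for all $i$.

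For the second step, fix right coset representatives $t_1,\dots,t_n$ for $H$ in $G$, where $n=[G:H]$. Any $g\in G$ has the form $g=ht_i$, and
$g\,\underline{x}^m g^{-1}=h(t_i\underline{x}^m t_i^{-1})h^{-1}$; since $N\triangleleft G$ contains $\underline{x}^m$, the conjugate $t_i\underline{x}^m t_i^{-1}$ again lies in $N^l\subseteq H^l$. Therefore $\underline{x}^m$ and $\underline{y}^m$ are $G$-conjugate if and only if there exists some $i\in\{1,\dots,n\}$ such that $t_i\underline{x}^m t_i^{-1}$ and $\underline{y}^m$ are $H$-conjugate, a condition decidable by $n$ invocations of the given multiple conjugacy algorithm for $H$.

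The only remaining concern is the effectivity of the standard operations for finite-index subgroups---computing $n$, finding a normal core, enumerating coset representatives, and rewriting an element of $H$ presented as a word in the generators of $G$ as a word in the generators of $H$---all of which are carried out by the Todd--Coxeter and Reidemeister--Schreier procedures. Hence there is no substantive obstacle: the conceptual heart of the argument is simply that unique roots bridge the gap between $G$-conjugacy of the original tuples and $G$-conjugacy of their $m$-th powers, after which a finite enumeration of cosets completes the reduction to the multiple conjugacy problem in $H$.
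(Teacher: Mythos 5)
Your argument is correct and is essentially the standard one that the paper invokes by citing Lemma 5.3 of \cite{BM}: pass to $m$-th powers landing in the normal core, use unique roots to transfer conjugacy between the tuples and their powers, and then run over finitely many coset representatives to reduce $G$-conjugacy inside the core to $H$-conjugacy. The effectivity remarks (Todd--Coxeter, Reidemeister--Schreier) correctly address the only remaining point, so there is nothing to add.
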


The final lemma that we need can be proved by
a straightforward induction on the
nilpotency class, but there is a more elegant argument due to 
Lo (Algorithm 6.1 of \cite{Lo})  that provides an algorithm which is practical
for computer implementation.

\begin{lemma}\label{Lo} If $Q$ is a finitely generated
nilpotent group, then there is an
algorithm that, given finite sets $S,T\subset Q$ and $q\in Q$,
will decide if $q\<S\>$ intersects $\<T\>$ non-trivially. \qed
\end{lemma}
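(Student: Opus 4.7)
The plan is to argue by induction on the nilpotency class $c$ of $Q$.

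In the base case $c=1$, the group $Q$ is finitely generated abelian, and writing the operation additively, $q+\langle S\rangle$ meets $\langle T\rangle$ if and only if $q\in \langle S\cup T\rangle$. Membership of a fixed element in a finitely generated subgroup of a finitely generated abelian group is decidable, for instance by reduction to Smith normal form.

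For the inductive step, let $Z=Z(Q)$ and $\pi\colon Q\to \bar Q:=Q/Z$; note that $\bar Q$ has class $c-1$. First I would apply the inductive algorithm to decide whether $\bar q\,\langle \pi(S)\rangle$ meets $\langle \pi(T)\rangle$ in $\bar Q$; if not, output \emph{No}. Otherwise, by a naive search (using the decidable word problem in $\bar Q$) one can find specific elements $\bar s_0\in\langle \pi(S)\rangle$, $\bar t_0\in\langle \pi(T)\rangle$ with $\bar q \bar s_0=\bar t_0$, and lift these to $s_0\in\langle S\rangle$, $t_0\in\langle T\rangle$ so that $z_0:=t_0^{-1}qs_0\in Z$. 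Any candidate pair $(s,t)\in\langle S\rangle\times\langle T\rangle$ then takes the form $s=s_0u$, $t=t_0v$, and a short calculation using the centrality of $Z$ shows that $qst^{-1}=1$ if and only if $(u,v)$ lies in the subgroup
\[ H := \{(u,v)\in \langle S\rangle\times\langle T\rangle \mid uv^{-1}\in Z\} \]
and satisfies $uv^{-1}=z_0^{-1}$. Crucially, the map $\theta\colon H\to Z$, $\theta(u,v):=uv^{-1}$, is a genuine \emph{homomorphism}: the identity $u_1u_2v_2^{-1}v_1^{-1}=(u_1v_1^{-1})(u_2v_2^{-1})$ holds precisely because $u_2v_2^{-1}\in Z$ commutes with $v_1^{-1}$. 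Thus the original question reduces to deciding whether $z_0^{-1}\in Z$ lies in the finitely generated subgroup $\theta(H)$ of the finitely generated abelian group $Z$, which is settled by the base case.

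The main obstacle is to produce, effectively, a finite generating set for $H$. To this end, observe that $H$ fits into a short exact sequence
\[ 1\longrightarrow (\langle S\rangle\cap Z)\times(\langle T\rangle\cap Z)\longrightarrow H \longrightarrow \pi(\langle S\rangle)\cap\pi(\langle T\rangle)\longrightarrow 1, \]
so it suffices to compute finite generating sets for $\langle S\rangle\cap Z$, $\langle T\rangle\cap Z$ and $\pi(\langle S\rangle)\cap\pi(\langle T\rangle)$, and then to lift each generator of the last intersection to a pair $(u,v)$ with $\pi(u)=\pi(v)$. Each of these subsidiary tasks — intersections of finitely generated subgroups (including intersection with the centre) and computing pre-images under $\pi$ — is a classical decidable problem for finitely generated nilpotent (indeed polycyclic) groups, typically handled uniformly by working in Mal'cev coordinates. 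Lo's Algorithm 6.1 organises precisely these coordinate computations into a procedure that is efficient enough for computer implementation.
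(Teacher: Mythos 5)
Your induction on the nilpotency class is correct, and it is precisely the ``straightforward induction on the nilpotency class'' that the paper asserts is possible but does not write out: the paper's own proof of this lemma consists of that one remark plus a citation of Lo's Algorithm~6.1, which it prefers because it is practical to implement. Your reduction is sound: the verification that $\theta(u,v)=uv^{-1}$ is a homomorphism on $H$ because $uv^{-1}$ is central is exactly the right observation, and the equivalence ``$q\langle S\rangle\cap\langle T\rangle\neq\emptyset$ iff $z_0^{-1}\in\theta(H)$'' checks out. The one point you should be more candid about is the status of the step where you produce a finite generating set for $H$. Computing generators for $\pi(\langle S\rangle)\cap\pi(\langle T\rangle)$ and for $\langle S\rangle\cap Z$ is not a strictly easier problem to which the coset-intersection problem is being reduced: effective intersection of finitely generated subgroups of finitely generated nilpotent groups is itself one of the main theorems of Lo's paper (and of the earlier algorithmic theory of polycyclic groups), of essentially the same depth as the lemma being proved. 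So as written your argument is a reduction of coset intersection to subgroup intersection, legitimised by citing the classical literature, rather than a self-contained induction; to make it self-contained one would need to run the intersection computation through the same central-series induction (which can be done, but is exactly the bookkeeping that Lo's Mal'cev-coordinate algorithm organises). Given that the paper itself simply cites Lo, this is an acceptable level of rigour, but the dependence should be acknowledged as load-bearing rather than ``classical background''.
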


\begin{thm}[=Theorem \ref{t:conj}]\label{t:Mconj} The multiple conjugacy problem is
solvable in every finitely presented residually
free group.
\end{thm}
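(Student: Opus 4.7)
The plan is to embed $S$ into an ambient bicombable group to which Proposition \ref{solvC} applies, and then descend to $S$ via Lemma \ref{findex}. Using the algorithm of Theorem \ref{t:ee(S)}, construct the canonical embedding $\iota : S \hookrightarrow \ee{S} = \G_{\rm ab} \times \G_1 \times \cdots \times \G_n$, a finite direct product of limit groups. Limit groups are bicombable (being fundamental groups of compact non-positively curved spaces, as cited in the discussion after Proposition \ref{solvC}), and the class of bicombable groups is closed under finite direct products, so $\ee{S}$ itself is bicombable.

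Set $L := L_1 \times \cdots \times L_n$, where $L_i := \G_i \cap S$. By Theorem \ref{t:ee(S)}(2), $L$ lies in $S$, is normal in $\ee{S}$, and the quotient $\ee{S}/L = \Nil(S)$ is finitely generated and virtually nilpotent. Standard effective methods for finitely generated virtually nilpotent groups then produce a nilpotent normal subgroup of finite index in $\Nil(S)$; pulling this back yields a finite-index normal subgroup $N \triangleleft \ee{S}$ with $L \subset N$ and $N/L$ nilpotent. Put $H := S \cap N$, a subgroup of finite index in $S$ satisfying $L \subset H \subset N$. Finite presentations for $N$ and $H$ can be obtained by Todd-Coxeter and Reidemeister-Schreier.

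Since bicombability is inherited by finite-index subgroups, $N$ is bicombable. Applying Proposition \ref{solvC} with ambient group $N$, subgroup $H$, and normal subgroup $L \triangleleft N$ of nilpotent quotient, we conclude that the multiple conjugacy problem is solvable in $H$. Because residually free groups have unique roots, Lemma \ref{findex} lifts this to a solution of the multiple conjugacy problem in $S$, completing the plan.

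The main technical hurdle will be marshalling the effectivity. Theorem \ref{t:ee(S)} delivers the canonical embedding, and the passage to the finite-index subgroup $N$ is a routine computation inside the virtually nilpotent quotient $\Nil(S)$. The non-trivial effective input sits inside Proposition \ref{solvC}: we need an effective bicombing on $N$ (inherited from the one on limit groups supplied by \cite{AB}), an algorithm that computes finite generating sets for centralizers $C_N(\underline x)$ of finite tuples, and an application of Lo's algorithm (Lemma \ref{Lo}) inside the nilpotent quotient $N/L$ to decide when a coset of such a centralizer meets the image of $H$.
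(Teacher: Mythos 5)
Your proposal is correct and follows essentially the same route as the paper: embed $S$ into its existential envelope via Theorem \ref{t:ee(S)}, pass to the preimage of a finite-index nilpotent subgroup of the virtually nilpotent quotient $\ee{S}/L$, apply Proposition \ref{solvC} there, and descend to $S$ using unique roots and Lemma \ref{findex}. The only cosmetic difference is notational (the paper calls your $N$ and $H$ by the names $D_0$ and $\G_0$).
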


\begin{proof}
Let $\G$ be a finitely presented residually free group. 
Theorem \ref{t:ee(S)} allows us to
embed $\G$ as a subdirect product in $D=\L_1\times\dots\times\L_n$,
where $\L_i$ are limit groups, each $L_i=\L_i\cap \G$ is non-trivial,
$L=L_1\times\dots\times L_n$
is normal in $D$, and $D/L$ is virtually nilpotent. Let
$N$ be a nilpotent subgroup of finite index in $D/L$,
let $D_0$ be its inverse image in $D$ and let $\G_0=D_0\cap \G$.

We are now in the situation of Proposition \ref{solvC} with
$\G=D_0$ and $H=\G_0$. Thus $\G_0$ has a solvable multiple
conjugacy problem.
 Lemma \ref{findex} applies to residually free groups, 
 so the multiple conjugacy problem for $\G$ is also solvable. 
 \end{proof}

\subsection{The finite presentation problem}

We will need the
following technical observation. This was first proved in \cite{KMeffective}, Theorem 3.21.
It admits a short proof based on Wilton's theorem that finitely generated
subgroups of limit groups are virtual retracts (see \cite{GW} Theorem 2.4).

\begin{lemma}\label{l:makeP} There is an algorithm that, given a finite presentation of a limit group $\L$
and a finite set $X\subset\L$, will
output a finite presentation for the subgroup generated by $X$.
\end{lemma}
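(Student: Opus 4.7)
The plan is to use Wilton's virtual retract theorem to convert the problem into a terminating blind search. By that theorem there exists a finite-index subgroup $\L_0\subseteq\L$ containing $H:=\langle X\rangle$ together with a retraction $r:\L_0\to H$; since retracts of finitely presented groups are finitely presented, once such a pair $(\L_0,r)$ is produced explicitly a presentation of $H$ drops out by a standard Tietze argument.

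First I would enumerate candidate subgroups $\L_0$ by enumerating homomorphisms $\phi:\L\to Q$ onto finite groups $Q$ (choosing putative images of the generators of $\L$ and checking that the defining relations are satisfied) and setting $\L_0:=\phi^{-1}(\phi(H))$; every finite-index subgroup of $\L$ containing $H$ arises this way. For each such $\L_0$, Reidemeister--Schreier produces a finite presentation $\L_0=\langle Y\mid R\rangle$ together with explicit words $x(Y)$ expressing each $x\in X$ in the new generators.

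Next, in parallel with the enumeration of $\L_0$'s, I would search for a retraction $r$ by enumerating assignments $y\mapsto w_y$, where $w_y$ is a word in $X^{\pm1}$, and verifying two conditions: that the assignment extends to a well-defined homomorphism $r:\L_0\to H\subseteq \L_0$, i.e.\ each $\rho\in R$ satisfies $\rho(\dots,w_y,\dots)=1$ in $\L_0$; and that $r$ fixes $H$ pointwise, i.e.\ substituting $w_y$ for $y$ in $x(Y)$ yields $x$ in $\L_0$ for every $x\in X$. Both conditions are instances of the word problem in the limit group $\L_0$, which is solvable (and which, in any case, is inherited from the solution in $\L$ via Reidemeister--Schreier). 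By Wilton's theorem the joint diagonal search over pairs $(\L_0, r)$ must terminate.

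From a successful pair $(\L_0,r)$ one extracts the required presentation by the standard retract construction: $H\cong\L_0/\ker r$, with $\ker r$ normally generated in $\L_0$ by $\{y^{-1}w_y:y\in Y\}$, giving
$$H=\langle Y\mid R,\ y^{-1}w_y\ (y\in Y)\rangle,$$
after which Tietze moves eliminating each $y$ via the relation $y=w_y(X)$ produce a finite presentation on the generators $X$. The genuine obstacle --- the thing that makes the proof \emph{short} rather than routine --- is not any of the combinatorial bookkeeping, but having Wilton's virtual-retract theorem available to guarantee that the blind search actually terminates; without such a result there would be no effective bound on how long one would have to search, and indeed no reason to believe that a finite presentation of $H$ can be written down at all.
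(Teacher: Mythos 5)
Your argument is correct and is precisely the route the paper intends: the paper does not write out a proof of this lemma, but cites Kharlampovich--Myasnikov and remarks that it admits a short proof from Wilton's theorem that finitely generated subgroups of limit groups are virtual retracts, which is exactly the proof you have supplied (including the terminating blind search and the Tietze extraction of a presentation on $X$). One small repair: your assertion that \emph{every} finite-index subgroup of $\L$ containing $H$ arises as $\phi^{-1}(\phi(H))$ for a finite quotient $\phi$ is false in general (for $H=1$ only normal subgroups arise this way), but this does not affect termination, since taking $\phi$ to be the quotient by the normal core of Wilton's $\L_0$ yields a finite-index subgroup $\phi^{-1}(\phi(H))$ sandwiched between $H$ and $\L_0$, to which the retraction restricts, so the diagonal search over pairs $(\L_0,r)$ still succeeds.
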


Unlike limit groups, finitely generated subgroups of a finitely-presented residually-free group need
not be finitely presentable.  Our next result says that if such a finitely generated 
subgroup is finitely presentable, then  we can  effectively find a presentation.

\begin{theorem}[=Theorem \ref{lt:makeFP}]\label{t:makeFP} 
There is a uniform partial algorithm  for finding presentations of
finitely presentable subgroups of  finitely-presented residually-free groups. 
More precisely, there is a partial algorithm that, given a finite presentation for a 
residually free group $G$ and a finite set of words generating a subgroup $H$, 
will output a finite presentation for $H$ if it exists.
\end{theorem}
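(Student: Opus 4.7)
The plan combines Theorem \ref{t:ee(S)} with the effective VSP criterion Theorem \ref{t:effFPsubdirect} and Lemma \ref{l:makeP}. First, I would apply Theorem \ref{t:ee(S)} to $G$, producing the canonical embedding $G \hookrightarrow \ee{G} = \G_{\rm ab} \times \G_1 \times \cdots \times \G_n$ with explicit finite presentations of the non-abelian limit group factors. This restricts to an embedding $H \hookrightarrow \ee{G}$ whose generators are computable as explicit tuples. For each $i$ I would then apply Lemma \ref{l:makeP} to compute a finite presentation of the limit group $H_i := \pi_i(H) \subset \G_i$, and likewise determine the free abelian image $H_{\rm ab} := \pi_{\rm ab}(H)$. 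This exhibits $H$ as a subdirect subgroup of the finitely presented product $D := H_{\rm ab} \times H_1 \times \cdots \times H_n$, with the generators of $H$ written as explicit tuples.

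I would then run the partial algorithm of Theorem \ref{t:effFPsubdirect} on $H < D$. By Remark \ref{r:123map}, when this halts it delivers a finite presentation on the original generators of $H$; by Addendum \ref{a:noHalt}, halting is equivalent to the VSP condition holding for $H$ in $D$. Combined with Theorem \ref{t:main}, this gives the correct output whenever the supplied embedding happens to be neat for $H$.

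The central obstacle is that the induced embedding $H \hookrightarrow D$ need not be the canonical neat embedding of $H$, so VSP can fail even when $H$ is finitely presentable. A characteristic example: if $H \cong F$ sits diagonally in $G = F \times F$, then $H$ sits in $D = H_{\rm ab} \times F \times F$ as $h \mapsto ([h], h, h)$, whose image in any pair of factors has infinite index, violating VSP, although $H$ is trivially finitely presentable as a free group. To circumvent this I would run Theorem \ref{t:effFPsubdirect} in parallel over all ``reductions'' of $D$: projections that forget subsets of the non-abelian factors, combined with identifications of pairs of non-abelian factors via explicit isomorphisms, enumerable because the isomorphism problem for limit groups is decidable. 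If $H$ is finitely presentable, then Theorem \ref{t:ee(S)} applied hypothetically to $H$ provides a canonical neat embedding whose non-abelian factors map uniquely into $\eer{G}$ by the universal property Theorem \ref{t:ee(S)}(3); that neat embedding corresponds to one of the enumerated reductions, and on that branch VSP holds, so Theorem \ref{t:effFPsubdirect} terminates with a presentation.

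The hardest technical point is certifying that a candidate presentation $\mathcal P$ produced on some branch actually presents $H$ rather than a proper quotient along the chosen reduction. I would handle this by a parallel routine that, using the solvable word problem in $G$, searches for witnesses $h \in H \smallsetminus \{1\}$ whose images in the relevant projection of $D$ vanish; any such witness disqualifies the branch. Conversely, the absence of witnesses can be certified for the correct branch by constructing $\ee{|\mathcal P|}$ via Theorem \ref{t:ee(S)} and matching it with the image of $H$ in $\ee{G}$, exploiting the uniqueness statement in Theorem \ref{t:ee(S)}(4). If $H$ is not finitely presentable, no branch halts.
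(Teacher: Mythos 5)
Your opening reduction (via Theorem \ref{t:ee(S)}) and your diagnosis of the main obstacle are right, but the two devices you use to overcome it both have genuine gaps. First, your family of ``reductions'' is too small because it never projects away from the abelian coordinate, and this already kills your own motivating example: for the diagonal $H\cong F$ in $G=F\times F$ you get $D=H_{\rm ab}\times F\times F\cong\Z^2\times F\times F$ with $h\mapsto([h],h,h)$; forgetting one free factor leaves $H$ embedded in $\Z^2\times F$ as the graph of the abelianisation map, which has \emph{infinite} index (its intersection with $F$ is $[F,F]$), so VSP fails and Theorem \ref{t:effFPsubdirect} never halts on that branch either, while forgetting both free factors is not injective. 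So no branch of your algorithm terminates correctly even here. The abelian factor genuinely requires separate treatment (and so do non-abelian factors $\G_i$ with $\pi_i(H)$ abelian); the paper does this by passing to $q(H)\cong H/Z(H)$, computing $Z(H)$ from the relators of that quotient, and reassembling the central extension.

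Second, your certification step is not a proof. ``Matching $\ee{|\mathcal P|}$ with the image of $H$ in $\ee{G}$'' cannot certify that $\rho_B$ is injective on $H$: the collection of maximal limit-group quotients is far from a complete invariant, and even an abstract isomorphism $\rho_B(H)\cong H$ (which you have no means of deciding --- that is the open isomorphism problem) would still have to be upgraded to injectivity of the particular map $\rho_B|_H$. The correct and available test is the one the paper uses: Theorem \ref{t:effFPsubdirect} and Remark \ref{r:123map} deliver a presentation $\<\rho_B(X)\mid R\>$ on the \emph{images of the original generators} $X$ of $H$, and $\rho_B|_H$ is injective if and only if every $r\in R$ represents the identity in $H$, which is decidable by the (uniform) word problem in $G$. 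With that test in place your branching scheme could be repaired, but the paper's actual route is different and cleaner: it inducts on the number of direct factors, using Lemma \ref{projectFP} (projections of finitely presentable subgroups of products of limit groups are finitely presentable) to justify the recursive calls to the $n$ coordinate projections $q_i(H)$, the relator test above to detect when some $q_i$ is injective (equivalently $H\cap\G_i=1$), and the observation that otherwise $H$ is a full subdirect product, so that VSP is inherited from the $q_i(H)$ when $n>2$, Todd--Coxeter suffices when $n=2$, and the abelian factor is absorbed into the centre. You use neither the projection lemma nor the relator test, and both are load-bearing.
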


To prove this theorem we begin by applying the algorithm of  Theorem \ref{t:ee(S)} 
to find the existential envelope $D$ of $G$  and the images of the generators 
of $H$ in $D$.  So it suffices to consider the case in which $H$ is given by a finite
set of generators in a specified direct product of limit groups.  The theorem then
follows from Proposition \ref{p:makeFP} below.

\begin{remark}  We pause to record the following observation. 
The algorithm in \cite{GW} provides an enumeration of
non-abelian limit groups.  An obvious modification of this enumeration 
produces a recursively enumerable sequence of 
finite presentations for direct products $\G_1\times\dots
\times\G_n$ of limit groups.  
So if we are given any finite presentation of a group  $D$
and told that $D$  is a direct product limit groups, then a naive 
search identifies a presentation for $D$ on this list. 
Thus we may effectively replace the given presentation for $D$ with one in which such a 
direct product decomposition and the coordinate projections 
$p_i:D\to \G_i$ and $q_i:D\to D/\G_i$ are manifest. 
\end{remark}

\begin{lemma}\label{projectFP}
Suppose that $H$ is a subgroup of a direct product $D = \G_1\times \cdots \times  \G_n$
of limit groups $\G_i$.   If $H$ is finitely presented and 
$$\rho: D \to  \G_{i_1}\times \cdots \times \G_{i_k}  \mbox{ where } i_1 < \cdots < i_k$$
is the projection onto 
the product of
any subset of $k$ of the factors,
then $\rho(H)$ is finitely presented.
\end{lemma}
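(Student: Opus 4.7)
The plan is to reduce to the case where all factors of the ambient direct product are non-abelian by stripping off the abelian part as a central extension, and then to handle the non-abelian case by induction, dropping one factor at a time using the VSP theorem of \cite{BHMS1} together with Proposition~\ref{p:nilp}.

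First I would replace each $\G_i$ by $p_i(H)$, which is a finitely generated subgroup of a limit group and hence is itself a limit group; this makes $H$ a subdirect product in $D$ without altering $\rho(H)$. Writing $D = A \times N$ with $A$ the product of all abelian direct factors and $N$ the product of all non-abelian ones, the subgroup $H \cap A$ is a finitely generated central free-abelian subgroup of $H$, so $\overline H := H/(H\cap A)$ is finitely presented and embeds as a subdirect product in $N$. By dropping each non-abelian factor $\G_i$ in which $\overline H$ has trivial intersection (the corresponding projection is then injective on $\overline H$), I would arrange for $\overline H$ to embed as a finitely presented \emph{full} subdirect product in a sub-product $N' \subseteq N$ of non-abelian limit groups.

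The main sub-claim I would establish is that in this all-non-abelian full subdirect setting, every coordinate projection of $\overline H$ onto a sub-product of $N'$ is finitely presented. I would prove this by induction on the number of dropped factors: by Theorem~4.2 of \cite{BHMS1}, $\overline H$ satisfies VSP in $N'$, so Proposition~\ref{p:nilp}(2) yields that, for any chosen factor $\G_n$ of $N'$, the intersection $\overline H \cap \G_n$ is finitely generated as a normal subgroup of $\overline H$, whence $\overline H/(\overline H \cap \G_n)$ is finitely presented. The quotient remains a full subdirect product in the remaining factors, since for $i \neq n$ the subgroup $\overline H \cap \G_i$ meets $\overline H \cap \G_n$ trivially and therefore injects into the quotient; this keeps us in the same inductive setting, so the induction proceeds.

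To conclude, let $A_I, A_J, N_I, N_J$ denote the decompositions according to the factors kept and dropped by $\rho$, so that $\rho(H) \subset A_I \times N_I$. The subgroup $\rho(H) \cap A_I$ is finitely generated (being contained in the finitely generated abelian group $A_I$) and central in $\rho(H)$, while the quotient $\rho(H)/(\rho(H) \cap A_I)$ is canonically isomorphic to $p_{N_I}(H)$, which under $\overline H \hookrightarrow N'$ coincides with the image of $\overline H$ under the coordinate projection $N' \to N' \cap N_I$. The sub-claim then shows this quotient is finitely presented, and since a central extension of a finitely presented group by a finitely generated abelian subgroup is finitely presented, $\rho(H)$ is finitely presented. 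The principal obstacle will be the careful bookkeeping across these commutative diagrams of projections and the verification that the inductive step of the sub-claim preserves full subdirectness.
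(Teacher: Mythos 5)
Your reduction to $\overline H\hookrightarrow N'$ and the inductive sub-claim about projections of $\overline H$ \emph{within} $N'$ are both sound (and close to the paper's own mechanism: finite presentability plus fullness gives VSP by Theorem 4.2 of \cite{BHMS1}, and from there one concludes either via VSP being inherited by projections, as the paper does, or via Proposition \ref{p:nilp}(2) as you do). The gap is in the final identification. You assert that $\rho(H)/(\rho(H)\cap A_I)\cong p_{N_I}(H)$ ``coincides with the image of $\overline H$ under the coordinate projection $N'\to N'\cap N_I$''. This is false in general: the projections $N\to N'$ and $N\to N_I$ do not factor through one another, so $p_{N_I}(\overline H)$ cannot be computed from the image of $\overline H$ in $N'$. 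The point is that $N_I$ may contain non-abelian factors lying in $N''$, i.e.\ \emph{retained} factors $\G_j$ with $\overline H\cap\G_j=1$; the natural surjection $p_{N_I}(\overline H)\to p_{N'\cap N_I}(\overline H)$ then has kernel $p_{N_I}(\overline H)\cap(N''\cap N_I)$, which need not vanish. For instance, take $\overline H=\{(a,b,\phi(a)):a\in F_1,\ b\in F_2\}\le F_1\times F_2\times F_3$ with $\phi:F_1\to F_3$ a non-injective surjection of non-abelian free groups; then $N'=F_1\times F_2$, and for $N_I=F_2\times F_3$ one finds $p_{N_I}(\overline H)=F_2\times F_3$ whereas $p_{N'\cap N_I}(\overline H)=F_2$. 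Your argument therefore establishes finite presentability of a proper quotient of the group you need, and finite presentability does not lift from quotients to extensions without further information about the kernel.

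The missing case is precisely the one the paper's induction treats separately: a retained factor $\G_{i_j}$ with $\rho(H)\cap\G_{i_j}=1$. In that case $H\cap\G_{i_j}=1$ as well, so the projection $q_{i_j}$ killing $\G_{i_j}$ is injective on both $H$ and $\rho(H)$; one applies the inductive hypothesis (on the total number $n$ of factors) to $q_{i_j}(H)$ and its projection, and recovers $\rho(H)\cong q_{i_j}(\rho(H))$. Only after both $H$ \emph{and} $\rho(H)$ have been made full does the paper strip the abelian part as a finitely generated central subgroup and invoke VSP. If you restructure your argument as a single induction on $n$ with this two-sided case analysis --- discarding dropped factors that miss $H$, and discarding retained factors that miss $\rho(H)$ --- the remainder of what you wrote goes through.
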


\begin{proof}  If $p_i$ denotes the projection onto the factor $\G_i$, then each $p_i(H)$
is a finitely generated subgroup of a limit group and so is again a limit group.  Hence we 
may assume from the outset that $\G_i =p_i(H)$, that is, $H$ is a subdirect product.
Then every such $\rho(H)$ is also subdirect in the direct product $\rho(D)$.

If $n=1$ or $n=k$ there is nothing to prove.  Suppose that $H\cap \G_j =1$ 
 where  $ \G_j \subseteq \ker \rho$.  Then the  projection 
$$q_j: D \to \G_1\times \cdots \times \G_{j-1}\times \G_{j+1} \times \cdots \times \G_n$$
is injective on $H$.  So by induction on $n$, it follows that $\rho(H) = \rho( q_j(H))$
is finitely presented as required.

Next suppose that $\rho(H) \cap  \G_{i_j} = 1$.  Then $H\cap  \G_{i_j} = 1$ and  the projection 
$$q_{i_j} : D \to \G_1\times \cdots \times \G_{i_j-1}\times \G_{i_j+1} \times 
\cdots \times \G_n$$
is injective on both $H$ and $\rho(H)$.  Now $q_{i_j}(D)$ is a product of $n-1$
limit groups and $q_{i_j}\circ \rho = \rho\circ q_{i_j}$ is projection onto $k-1$ factors, and hence
by induction $q_{i_j}(\rho(H))$ is finitely presented.  But since $\rho(H) \cap \G_{i_j} = 1$,
it follows that $\rho(H)$ is finitely presented.

So we may now assume that both $H$ and $\rho(H)$ are full subdirect products.  Next
we observe generally that if $K$ is a full subdirect product of $D$ and $\G_j$ is an
abelian limit group, then $q_j(K) \subseteq D/\G_j$ is finitely presented if and only 
if $K$ is finitely presented, because $\G_j$ is a finitely-generated central subgroup.
Hence it suffices to consider the case all of the $\G_i$ are non-abelian.

But a full subdirect product of non-abelian limit groups is finitely presented if and only if it
satisfies VSP.  Since $H$ has VSP,  either $\rho(H)$ is a limit group or it also has VSP.
Hence $\rho(H)$ is finitely presented as desired.  
\end{proof}

Making use of this lemma we can now complete the proof of Theorem \ref{t:makeFP}
by showing the following.

\begin{proposition}\label{p:makeFP} 
There is a  partial algorithm that, given a direct product $D$ of limit groups presented as
$D = \G_1\times \cdots\times \G_n$ and a finite set of words $X$ generating a subgroup $H<D$,
\begin{enumerate}
\item\label{OKpres} in case $H$ is finitely presentable, will output a finite presentation 
for $H$ on the given generating set $X$;  or 
\item in case $H$ is not finitely presentable, will either halt saying that $H$ is not finitely presentable,
or will fail to halt.
\end{enumerate}
Moreover, in the case $H$ is finitely presentable, the algorithm will also
\begin{itemize}
\item determine for each $i$ whether or not $H\cap \G_i=1$; and
\item determine a finite generating set for the centre of $H$.
\end{itemize}
\end{proposition}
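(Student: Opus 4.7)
My plan is to reduce the problem to the effective VSP algorithm of Theorem~\ref{t:effFPsubdirect} applied to a suitable projection of $H$. First, for each factor $\G_i$, compute a presentation of $p_i(H)$: since $p_i(H)$ is a finitely generated subgroup of the limit group $\G_i$, it is itself a limit group, and Lemma~\ref{l:makeP} provides such a presentation. Replacing each $\G_i$ by $p_i(H)$, I may assume the embedding is subdirect. The word problem is solvable in each limit group, so I can decide which $\G_i$ are abelian and collect them into a single free abelian factor $A \isom \Z^r$, giving $H \hookrightarrow A \times \L_1 \times \cdots \times \L_m$ with each $\L_j$ non-abelian.

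Next, for each $j$, detect whether $H \cap \L_j = 1$ by running two semi-procedures in parallel: (i) enumerate words $w$ in the generators of $H$ for which, using the solvable word problem in each factor, $p_k(w) = 1$ for every $k \ne j$ (including the $A$-coordinate) and $p_j(w) \ne 1$, halting when such a $w$ is found (this succeeds iff $H \cap \L_j \ne 1$); (ii) tentatively assume $H \cap \L_j = 1$ and proceed to the main construction below. The analogous parallel check handles the abelian factor and recovers each $H \cap \G_i$ for the original indexing. When $H$ is finitely presentable, exactly one of (i), (ii) halts for each $j$. After projecting away the factors with trivial intersection and the abelian factor $A$, we obtain $\pi \colon H \to \overline H \leq \L_{j_1} \times \cdots \times \L_{j_k}$, a full subdirect product of non-abelian limit groups.

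With the trivial abelian factor, the embedding $\overline H \hookrightarrow \L_{j_1} \times \cdots \times \L_{j_k}$ is neat, so by Theorem~\ref{t:main}, $\overline H$ satisfies VSP provided it is finitely presentable, and by Lemma~\ref{projectFP} this holds whenever $H$ is. Theorem~\ref{t:effFPsubdirect} therefore outputs a finite presentation $\<\pi(X) \mid R\>$ of $\overline H$ on the projected generators. I then compute generators of the kernel $H \cap A \leq A \isom \Z^r$: evaluating each relator $r_i \in R$ on the original generators inside $D$ yields an element of $A$ (since $\pi(r_i) = 1$ in $\overline H$), and since $H \cap A$ is central in $H$ these elements normally generate, hence generate, $H \cap A$ as an abelian group. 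A presentation of $H$ on $X$ is then assembled by adding the free abelian relations of $H \cap A$, modifying each relator $r_i$ of $\overline H$ to equal the corresponding central element expressed as a word in $X$ (located by enumeration and the word problem in $D$), and eliminating any auxiliary generators via Tietze transformations.

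Finally, with a presentation of $H$ in hand, applying the effective canonical embedding algorithm of Theorem~\ref{t:ee(S)} produces $H \hookrightarrow \ee{H} = \ab{\G} \times \eer{H}$ and identifies $Z(H)$ as the intersection of $H$ with the abelian factor; its generators can be read off explicitly from that construction. The main obstacle is that the detection of trivial intersections with the direct factors, and the recognition of where the abelian kernel lives, are not decidable problems in isolation but only semi-decidable; the parallel-search strategy above converts this semi-decidability into precisely the partial algorithm asserted by the proposition, halting when $H$ is finitely presentable and possibly failing to halt otherwise.
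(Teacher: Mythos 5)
Your overall architecture --- pass to a subdirect situation via Lemma \ref{l:makeP}, isolate the abelian part, strip away the factors meeting $H$ trivially, and invoke Theorem \ref{t:effFPsubdirect} on what remains --- is in the right spirit, and several steps (Lemma \ref{projectFP}, the identification of the kernel of the projection with a central subgroup generated by the lifted relators, the final abelian bookkeeping) match the paper. But your treatment of the factors with trivial intersection has a genuine gap, in two parts, and the paper's induction on $n$ exists precisely to close it. First, your parallel scheme is unsound: procedure (i) is a correct semi-decision for $H\cap\L_j\ne 1$, but procedure (ii) --- ``assume $H\cap\L_j=1$ and run the main construction'' --- can halt even when the assumption is false, because $q_j(H)$ may well be a finitely presentable full subdirect product with VSP while $q_j|_H$ fails to be injective; Theorem \ref{t:effFPsubdirect} then returns a presentation of the proper quotient $q_j(H)$ rather than of $H$. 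So ``exactly one of (i), (ii) halts'' is unjustified, and nothing in your procedure certifies the tentative assumption. The paper's fix is a verification step: having obtained (by induction on $n$) a presentation of $Q_j=q_j(H)$ on the generators $q_j(X)$, it uses the solvable word problem in $D$ to test whether every relator of $Q_j$, read as a word in $X$, is trivial in $H$; this holds if and only if $q_j|_H$ is injective, i.e.\ iff $H\cap\G_j=1$, turning the one-sided semi-decision into a sound test.

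Second, and independently, even with perfect knowledge of which $H\cap\L_j$ are trivial, projecting away all such factors \emph{simultaneously} need not be injective modulo $A$: the kernel of your $\pi$ is $H\cap\bigl(A\times\prod_{j\in T}\L_j\bigr)$, which can strictly contain $H\cap A$ even when $H\cap\L_j=1$ for each $j\in T$ separately. Concretely, take $H=\Delta\times\L_3\le F\times F\times\L_3$ with $\Delta$ the diagonal of $F\times F$: then $H$ is finitely presentable and $H\cap\L_1=H\cap\L_2=1$, yet killing both of the first two factors has kernel $\Delta\ne 1$, so your $\overline H\cong\L_3$ while $H\cong F\times\L_3$, and your algorithm outputs a presentation of the wrong group. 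This is why the paper removes only one factor per inductive step, re-derives a presentation of the smaller projection, and re-tests the intersections \emph{there} (they change after each projection); the recursion on $n$ is the load-bearing part of the argument, not a stylistic choice. The recovery of $Z(H)$ at the end is fine once a correct presentation is in hand.
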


\begin{proof}  Note that there is a uniform solution to the word problem 
for all such $\G_i$ and $D$ since
they are residually finite and given by finite presentations.  Hence, as a finitely generated
subgroup of $D$,  $H$ also has a solvable word problem.

If $n=1$ then  we apply the algorithm of Lemma \ref{l:makeP}  (in this case
$H$ is finitely presentable).  We now proceed by induction on $n$.  
Let $q_j$ be the projection of $D$ onto all the factors other than $\G_j$, so $q_j:D\to D/\G_j$.
By Lemma \ref{projectFP}, if $H$ is finitely presented then each $Q_j = q_j(H)$
is also finitely presented and lies in a direct product of fewer limit groups, so we have 
(by our inductive assumption)  a partial algorithm to find a presentation for $Q_j$. 

So we now launch $n$ versions of this partial algorithm 
attempting to find a presentation for each $q_i(H)$ in case it is finitely presented ($i=1,\ldots,n)$.
If some process finds a $Q_j$ is not finitely presented, we are done since $H$ cannot
be finitely presented and we halt with this information.

So we may assume each of our $n$ processes finds  that its corresponding 
$Q_i = \<q_i(X)\>$ is finitely presented,  say as $\<x_1,...,x_s \mid  r_1=1,...,r_t=1\>$
where $X=\{x_1,\dots,x_s\}$ and the normal subgroup generated by the $r_j$ is the
kernel of the surjection $x\mapsto q_i(x)$ to $Q$ from the free group on $X$.
Then $q_i$ is an isomorphism from $H$ onto $Q_i$ if and only if all of the $r_j =_H 1$. 
And these equalities can be tested using the solution to the word problem for $D$.  
Of course $H\cap\G_i =1$ if and only if $q_i$ is an isomorphism. 

So if $H$ is finitely presentable, each of these $n$ processes will (eventually) have halted 
and we have decided whether or not each $H\cap\G_i =1$.  If some $H\cap\G_i =1$,
then $Q_i \isom H$ and we have a presentation for $H$ and our inductive algorithm
also provides a set of generators for the centre of $H$ and we are done.

So we may now assume that we have a finite presentation for each  $Q_i$ 
and that $H\cap \G_i \neq 1$, thus $H$ is a full subdirect product.

Observe that using the solution to the word problem we can easily determine
which of the $\G_i$ are abelian (and then they must be free abelian).
So next suppose that none of the $\G_i$ is abelian.  In this case, we know $H$ is 
finitely presentable if and only if it satisfies VSP.   Also, the centre of $H$
is trivial in this case. 

If $n=2$, this is true if and only
if $H$ has finite index in $\G_1\times \G_2$.  So if $n=2$, we start trying to find
a presentation for $H$ using the classic Todd-Coxeter algorithm.  If $H$ is finitely
presentable, it will terminate and give us a presentation for $H$.  
If $H$ is not finitely presentable, the process will fail to terminate.
(Note:  whether this process will terminate is an unsolvable problem.)

If $n>2$, then since each $Q_i$ is  full and finitely presented, each $Q_i$ has VSP
and hence $H$ has VSP.  Thus applying the algorithm of Theorem \ref{t:effFPsubdirect}
we can effectively find a finite presentation $\mathcal{P}$ of $H$ on the generators  $X$. 

Now suppose that at least one of the $\G_i$ is abelian, and recall it is a finitely generated free
abelian group.  Group all the abelian factors together as one factor and revise the notation so that
$\G_n$ is the only abelian direct factor.  Then, applying our algorithm inductively,
we can assume we have a finite presentation for $q_n(H)= Q_n = \<q_n(X)\>$,  
say as $\<x_1,...,x_s \mid  r_1=1,...,r_t=1\>$.  We also know that $q_n(H)\isom H/Z(H)$. 
Lifting the generators of this presentation back up to the corresponding  generators of $H$,
the same words $r_j$, which are relators of $Q_i$,  are elements in $D$ such that $r_j \in \G_n$.
Further,  they generate the centre $Z(H)$ of $H$.  
Using standard algorithms for finitely generated abelian groups,
we can find a set of defining relations, say $z_1=1,\ldots,z_k=1$ 
for the subgroup  of $\G_n$ generated by the $r_i$.
Finally, expressing the $z_j$ as words $\hat{z}_j$ in the $x_i$, 
we can then write down a presentation for $H$ as 
$$H =\< x_1,...,x_s \mid  [x_i, r_j] = 1 \mbox{ for } i=1,\ldots,s,\ j=1,\ldots,t,$$
$$ \hat{z}_1=1,\ldots,\hat{z}_k=1  \>.$$ 
This complete the proof.
\end{proof}

As previously explained this also completes the proof of Theorem \ref{t:makeFP}.

\subsection{The membership problem}
Let $G$ be a recursively presented group,
and $H$ the subgroup generated by a set of words
in the generators of $G$.  The {\em membership
problem} for $H$ in $G$ is the algorithmic problem of
deciding, given a word $w$ in the generators of $G$, whether
or not the element $g\in G$ represented by $w$ belongs to $H$. 

\begin{remark}\label{r:sepbl}
There is an obvious (uniform) algorithm to solve the membership problem for separable subgroups of finitely
presented groups: one runs a naive search to express the given element as a word in the generators of the
subgroup while, in parallel, enumerating the finite quotients of the ambient group, checking to see if the
element is separated from the subgroup in any of them.
\end{remark}

We prove that there is a uniform partial algorithm to solve the membership problem for finitely presentable subgroups 
of finitely presented residually free groups.

\begin{thm}[=Theorem \ref{t:memb}]\label{t:mem1}  
There is a uniform partial algorithm that, 
given a finite presentation of a residually free group $G$,
 a finite generating set for a   subgroup $H\subset G$ and 
 a word  $g$ in the generators defining $G$, 
 will determine whether or not  $g$  lies in $H$, provided that $H$ is finitely presentable.  
 \end{thm}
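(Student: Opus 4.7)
My plan is to combine the canonical envelope constructions of Theorem \ref{t:ee(S)}, the presentation-finding algorithm of Theorem \ref{t:makeFP} (equivalently Proposition \ref{p:makeFP}), and the separability of full subdirect products from Corollary \ref{c:sdp-sep}, so as to reduce membership in $H$ to a pair of semi-decision procedures that are run in parallel.

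As preprocessing, I would apply Theorem \ref{t:ee(S)} to $G$ to obtain the canonical embedding $G \hookrightarrow \ee{G} = \G_0 \times \G_1 \times \dots \times \G_n$; this in particular provides a solution of the word problem in $G$. Applying Theorem \ref{t:makeFP} to the given generating set of $H$ then produces a finite presentation of $H$ together with the explicit images of its generators in $\ee{G}$; this is where the hypothesis of finite presentability of $H$ is genuinely used. A second application of Theorem \ref{t:ee(S)}, now to $H$ itself, produces the canonical embedding $H \hookrightarrow \ee{H} = \go{H} \times \eer{H}$, under which $H/Z(H)$ sits as a full subdirect product of non-abelian limit groups in $\eer{H}$; Corollary \ref{c:sdp-sep} then asserts that $H$ is separable in $\ee{H}$.

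With this setup, the algorithm runs two semi-decision procedures in parallel. Procedure (A) naively enumerates products of the generators of $H$ (as words in the generators of $G$) and tests each for equality with $g$ using the solvable word problem in $G$; it halts precisely when $g \in H$. Procedure (B) enumerates finite quotients of the residually finite group $\ee{G}$ (each direct factor being a limit group, hence residually finite) and, in each, tests whether the image of $g$ lies in the image of $H$; it outputs ``no'' as soon as it finds a quotient separating the two. Since $G \hookrightarrow \ee{G}$ is injective, $g \in H$ in $G$ if and only if the image of $g$ lies in the image of $H$ in $\ee{G}$, so the correctness of (B) reduces to the claim that $H$ is separable in $\ee{G}$.

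The main technical obstacle is therefore to transfer the separability of $H$ from its own envelope $\ee{H}$, supplied by Corollary \ref{c:sdp-sep}, to the ambient envelope $\ee{G}$, which is a priori not related to $\ee{H}$. The transfer is to be effected by combining the explicit data produced by Theorem \ref{t:makeFP}---namely the projections $\Lambda_j = p_j(H) \subset \G_j$ and the intersections $H \cap \G_j$---with the LERF property of limit groups (Wilton's theorem, already invoked in the proof of Lemma \ref{l:makeP}): since each $\Lambda_j$ is separable in $\G_j$, finite quotients of $\ee{G}$ that encode both the position of $H$ in $\ee{G}$ and the separation data furnished by $\ee{H}$ can be assembled factor by factor, producing the required separating homomorphism. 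Verifying this transfer in detail, and ensuring that procedure (B) terminates whenever $g \notin H$, is the technical heart of the argument, and it is here that the interplay between the two canonical envelopes $\ee{G}$ and $\ee{H}$ plays its essential role.
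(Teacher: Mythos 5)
Your overall architecture (reduce to a product of limit groups via Theorem \ref{t:ee(S)}, invoke Proposition \ref{p:makeFP}, and finish with the standard parallel search of Remark \ref{r:sepbl}) matches the paper's, but there is a genuine gap at exactly the point you flag as ``the technical heart'': your procedure (B) terminates on inputs $g\notin H$ only if $H$ is closed in the profinite topology of $\ee{G}$, and you do not prove this. Corollary \ref{c:sdp-sep} gives separability of $H$ only inside its \emph{own} envelope (and even there only after one checks that $H$ sits as a \emph{full} subdirect product), and separability does not pass to a larger ambient group without an argument that the profinite topology of $\ee{G}$ induces a sufficiently fine topology on $H$. Your proposed repair --- assembling separating quotients ``factor by factor'' from separability of the projections $\Lambda_j=p_j(H)$ in $\G_j$ --- is precisely the kind of argument that fails in general: finitely generated subgroups of $F\times F$ can have unsolvable membership problem even though all of their coordinate projections are separable. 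Moreover the factors of $\ee{H}$ need not correspond to the factors $\G_j$ of $\ee{G}$ (the maps $\eer{H}\to\Lambda_j$ supplied by Proposition \ref{factorisation} are quotient maps, which go the wrong way for pulling back finite-index subgroups), so the ``interplay between the two envelopes'' does not by itself produce the separating quotients. The separability of finitely presentable subgroups of residually free groups is in fact true, but it is a theorem of Bridson--Wilton proved \emph{after} this paper, and the paper deliberately avoids needing it.

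The paper's proof circumvents the issue by an induction on the number of factors of $\ee{G}=\L_1\times\dots\times\L_n$ that never requires separability of $H$ in the original ambient product. If some $H\cap\L_i$ is trivial (detected by Proposition \ref{p:makeFP}), one projects away from $\L_i$: the projection is \emph{injective} on $H$, so membership of $g$ reduces to membership of the projection of $g$ (handled by induction) plus a single word-problem check $g^{-1}w=1$ --- no separability needed. If every $H\cap\L_i$ is non-trivial, one first tests whether each coordinate $g_i$ lies in $H_i:=p_i(H)$ using Wilton's LERF theorem for the limit group $\L_i$, and then \emph{replaces the ambient group by} $D=H_1\times\dots\times H_n$, in which $H$ is a full subdirect product; only now does Corollary \ref{c:sdp-sep} apply, and the parallel search is run in $D$, not in $\ee{G}$. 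To repair your argument you would either need to carry out this reduction, or supply an independent proof that $H$ is closed in the profinite topology of $\ee{G}$ (handling in particular the factors with $H\cap\G_j=1$, where fullness fails).
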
 

\begin{proof} The algorithm given by
Theorem \ref{t:ee(S)} embeds $G$ in a direct
product $\Delta$ of limit groups. A solution
to the membership problem for $H\subset \Delta$ provides a solution
for $H\subset G$. Thus there is no loss of generality in
assuming that $G$ is a direct product of limit groups,
say $G=\L_1\times\dots\times\L_n$. 

To complete the proof,
we argue by induction on $n$. The case $n=1$ is covered by
the fact that limit groups are subgroup separable \cite{wilton}.

We first employ the algorithm of Proposition \ref{p:makeFP}: if $H$ is not finitely
presentable then this algorithm will either fail to halt
or else halt and inform us that $H$ is not finitely
presentable; if $H$ is finitely presentable
then it will halt and list the indices $i$ such that $L_i:=H\cap \L_i =1$.

There is no loss of generality in assuming that elements
$g\in G$ are given as words in the generators of the factors,
and thus we write $g=(g_1,\dots,g_n)$. We assume that the
generators of $H$ are given likewise. 

We first deal with the case where some $L_i$ is trivial, say $L_1$.
The projection of
$H$ to $\L_2\times\dots\times\L_n$ is then isomorphic to $H$, so
in particular it is finitely presented and our induction provides
an algorithm that determines if $(g_2,\dots,g_n)$ lies in
this projection. If it does not, then $g\notin H$. If it does,
then naively enumerating equalities $g^{-1}w=1$ we
eventually find a word $w$ in the generators of $H$ so that
$g^{-1}w$ projects to $1\in \L_2\times\dots\times\L_n$.
Since $L_1=H\cap\L_1=\{1\}$, we deduce that 
in this case $g\in H$ if and only if  $g^{-1}w=1$, and the
validity of this equality can be checked using the uniform solution to the word
in residually free groups.

It remains to consider the case where $H$ intersects
each factor non-trivially. Again we are given $g=(g_1,\dots,g_n)$.
The projection $H_i$ of $H$ to $\L_i$ is finitely generated
and Wilton's theorem \cite{wilton} tells us
that $\L_i$ is subgroup separable, so we can determine
algorithmically if $g_i\in H_i$. If $g_i\notin H_i$ for some $i$
then $g\notin H$ and we stop. Otherwise, we replace $G$
by the direct product $D$ of the $H_i$. Lemma \ref{l:makeP}
allows us to compute a finite presentation for $H_i$
and hence $D$.

We are now reduced to the case where $H$ is a full subdirect
product of $G(=D)$. Such subgroups are separable, by Corollary \ref{c:sdp-sep},
so remark \ref{r:sepbl} completes the proof.
\end{proof}

\begin{remark} Following our work, Bridson and
Wilton \cite{BWilt}  proved that in the profinite topology 
on any finitely generated residually free group, all finitely
presentable subgroups are closed.  Using the results
of \cite{BWilt} and \cite{BHMS1},  Chagas and Zalesski \cite{CZ} proved that 
all finitely presented residually free groups are conjugacy separable.
\end{remark}

\subsection{Recursive enumerability}

In view of the insights we have gained into the structure of finitely
presentable residually free groups, 
it seems reasonable to conjecture that the isomorphism problem for this class of groups is solvable.
We have not yet succeeded in constructing an algorithm
to determine isomorphism, but we are nevertheless able to prove
the following partial result in this direction.

\begin{theorem}[= Theorem \ref{t:re}]\label{t:reFGRF}
The class of finitely presentable residually free groups is
recursively enumerable.   More precisely,
there is a Turing machine that will output a list of finite
group-presentations $\mathcal{P}_1,\mathcal{P}_2,\dots$ such that:
\begin{enumerate}
\item the group $G_i$ presented by each $\mathcal{P}_i$ is residually free; and
\item every finitely presented residually free group is isomorphic to at
least one of the groups $G_i$.
\end{enumerate}
\end{theorem}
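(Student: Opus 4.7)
The plan is to combine the effective VSP criterion (Theorem \ref{t:effFPsubdirect}) with the structural characterisation of finitely presented residually free groups as VSP subdirect products (Theorem \ref{t:main}(4)), using a recursive enumeration of direct products of limit groups as the input stream.

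First I would fix a recursive enumeration $\L_1,\L_2,\dots$ of finite presentations of limit groups; such an enumeration is provided by \cite{GW} (one can, if desired, split the list into abelian and non-abelian limit groups, since abelianness is decidable). From this I would build a recursive enumeration of finite presentations of groups of the form $D=\G_0\times\G_1\times\dots\times\G_n$, where $\G_0$ is free abelian (possibly trivial) and $\G_1,\dots,\G_n$ are non-abelian limit groups. For each such $D$ one can enumerate all finite tuples $Y=(y_1,\dots,y_k)$ of words in the generators of $D$; these range over all finite subsets of $D$.

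Next, for every pair $(D,Y)$ in this combined enumeration, I would launch the partial algorithm of Theorem \ref{t:effFPsubdirect} applied to $D$ and the subset $Y$. By Addendum \ref{a:noHalt}, this process halts and outputs a finite presentation $\<Y\mid R\>$ for $\<Y\>\subset D$ precisely when $Y$ generates a VSP subgroup of $D$; otherwise it runs forever. I would run all these processes in parallel in the standard diagonal fashion, printing each presentation on the master output tape as soon as it is produced. Call the resulting list $\mathcal P_1,\mathcal P_2,\dots$.

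It remains to verify the two required properties. For (1), every group $G_i$ output is by construction a subgroup of a direct product of limit groups, hence residually free (since limit groups are residually free and residual freeness is preserved under taking subgroups and direct products). For (2), let $S$ be an arbitrary finitely presented residually free group. By Theorem \ref{t:main}(4) there exists a neat embedding $S\hookrightarrow \G_0\times\G_1\times\dots\times\G_n$ into a product of limit groups with $S$ satisfying VSP. Choose any finite generating tuple $Y$ for the image of $S$ in this product. Since the presentation of this particular $D$ and the tuple $Y$ will appear at some finite stage of our enumeration, the corresponding instance of Theorem \ref{t:effFPsubdirect} halts and outputs a finite presentation of $\<Y\>\cong S$, which is therefore on the master list.

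The only subtle point, and the one I would treat most carefully, is the compatibility of the enumeration with the input format demanded by Theorem \ref{t:effFPsubdirect}: one must present the factors $\G_i$ explicitly by finite presentations and present the candidate generators as tuples of words in those presentations. This is routine once one has the enumeration of limit group presentations from \cite{GW} in hand. There is no need to know in advance whether a given $(D,Y)$ satisfies VSP, nor to decide residual freeness of an arbitrary finite presentation (the latter being in general undecidable): residual freeness of each output is guaranteed a priori by the construction, and completeness is guaranteed a posteriori by Theorem \ref{t:main}.
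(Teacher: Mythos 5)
Your proposal is correct and follows essentially the same route as the paper: enumerate limit groups via \cite{GW}, form finite direct products and finite generating tuples, and run the partial algorithm of Theorem \ref{t:effFPsubdirect} in parallel, with completeness guaranteed by Theorem \ref{t:main}. The only cosmetic difference is that the paper first filters candidates with Todd--Coxeter before invoking Theorem \ref{t:effFPsubdirect}, whereas you rely directly on Addendum \ref{a:noHalt}; these are equivalent.
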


\begin{proof}
First we enumerate the limit groups, using the
algorithm in  \cite{GW}.
This leads in a standard way to an
enumeration of finite subsets $Y$ of finite direct products thereof:
$Y\subset D:=\G_1\times\cdots\times\G_n$.

For each such $Y$ and each pair $i,j$, the Todd-Coxeter procedure
will tell us if $p_{ij}(Y)$ generates a finite-index subgroup
of $\G_i\times\G_j$ (but will not terminate if it does not).

Whenever we encounter a finite collection of limit groups
$\G_1,\dots,\G_n$ and a finite subset $Y\subset D$
such that $p_{ij}(Y)$ generates a finite-index subgroup
of $\G_i\times\G_j$ for all $i,j$, we set about constructing
a finite presentation for the subgroup generated by $Y$, using
Theorem \ref{t:effFPsubdirect}.

Thus a list can be constructed of all finitely-presented full
subdirect products of limit groups, together with a finite
presentation for each one.  By Theorem \ref{t:main}
this list contains (at least one isomorphic copy of)
every finitely presentable residually free group.
\end{proof}

The facts we have proved or mentioned
in this paper provide recursive enumerations of various other classes of groups:

\begin{enumerate}
\item\label{i:e1} There is a recursive enumeration of the finitely generated residually free 
groups: each 
is given by a finite set $X$  that
generates a full subdirect product in a finite direct product of limit 
groups $\G_1\times\cdots\times\G_n$.
\item\label{i:e2} One can extract from (1) a recursive enumeration of the finitely generated residually free 
groups with trivial centre (those for which each $\G_i$ is non-abelian), and a complementary enumeration
of those with non-trivial centre.    
\item\label{i:e4} The subsequence of (\ref{i:e1}) consisting of those groups that are finitely presentable 
is recursively enumerable (Theorem \ref{t:reFGRF}).
\item\label{i:e5} The subsequence of (\ref{i:e4}) consisting of those finitely presented residually 
free groups with trivial (resp. non-trivial) centre is recursively enumerable.
\end{enumerate}

\subsection{Partial results on the isomorphism problem}

Suppose we are given two finite presentations of residually free groups $G$ and $H$.  Can we decide algorithmically
whether or not $G\cong H$?

There is a partial algorithm that will search for a mutually
inverse pair of isomorphisms, expressed in terms of the 
given finite generating sets for $G$ and $H$.  This will
terminate if and only if $G\cong H$, giving us the desired
isomorphism in the process.

The difficult part of the problem is therefore to recognise,
via invariants or otherwise, when $G\not\cong H$.

Our earlier results have provided computations of an important
invariant, namely 
the set of maximal limit group quotients
of $G$.  Using the solution to the isomorphism 
problem for limit groups (\cite{BKM,DG}), we can 
distinguish $G$ from $H$ unless these  agree
for $G$ and $H$.  The problem is thus effectively reduced
to the case where $G$ and $H$ are specifically given to us
as full subdirect products of limit groups $\G_1,\dots,\G_n$.

Moreover, by Proposition \ref{p:makeFP} we can effectively
determine whether or not $Z(G)\cong Z(H)$.  So we may assume that
$Z(G)\cong Z(H)=Z$, say, and that the $\G_i$ are all 
non-abelian if $Z$ is trivial.  
In the case where $Z$ is non-trivial, then precisely one of the $\G_i$ is abelian.
We make the convention that in this case $\G_1$ is abelian.
Then $\G_1\cong \go{G} \cong \go{H}$, and $Z(G)=G\cap\G_1$,
$Z(H)=H\cap\G_1$.  Under these circumstances, as a special
case of Theorem \ref{t:ee(S)}(4) we have:

\begin{prop}\label{isos}
Any isomorphism $\theta:G\to H$ is the restriction of
an ambient automorphism of the direct product
$\G_1\times\cdots\times\G_n$.  This in turn restricts to
a set of isomorphisms $\G_i\to\G_{\sigma(i)}\ (i=1,\dots,n)$
for some permutation $\sigma$ of $\{1,\dots,n\}$.
\end{prop}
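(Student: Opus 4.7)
The plan is to invoke the uniqueness clause of Theorem \ref{t:ee(S)}(4) for two different full subdirect embeddings of $G$ into the ambient product $D=\G_1\times\dots\times\G_n$. I would first dispose of the case $Z=1$, in which every $\G_i$ is non-abelian. Here both the inclusion $\iota_G:G\hookrightarrow D$ and the composite $\psi:=\iota_H\circ\theta:G\hookrightarrow D$ realise $G$ as a full subdirect product of non-abelian limit groups, so Theorem \ref{t:ee(S)}(3)--(4) furnishes isomorphisms $\hat{\iota_G},\hat\psi:\eer{G}\to D$ that respect the direct-sum decomposition. Since $Z(G)=1$, the canonical map $\rho|_G$ is injective and satisfies $\hat{\iota_G}\circ\rho|_G=\iota_G$ and $\hat\psi\circ\rho|_G=\psi$. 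Consequently $\alpha:=\hat\psi\circ\hat{\iota_G}^{-1}$ is an automorphism of $D$ whose restriction to $G$ is $\theta$, and because both $\hat\psi$ and $\hat{\iota_G}$ respect the direct-sum decomposition, $\alpha$ permutes the factors $\G_1,\dots,\G_n$.

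To treat the case $Z\ne 1$, I would reduce to the preceding one using the centre. Since $Z(G)=G\cap\G_1$ and $Z(H)=H\cap\G_1$, the projection $q:D\to\G_2\times\dots\times\G_n$ embeds $\bar G:=G/Z(G)$ and $\bar H:=H/Z(H)$ as full subdirect products of non-abelian limit groups, and $\theta$ descends to an isomorphism $\bar\theta:\bar G\to\bar H$. The case already established then produces an automorphism $\bar\alpha$ of $\G_2\times\dots\times\G_n$ that extends $\bar\theta$ and permutes the non-abelian factors. For the abelian factor, I would use the identification $\G_1\cong\go{G}$ noted just before the statement of the proposition: $p_1:G\to\G_1$ is surjective and factors through the maximal free abelian quotient $\go{G}$, and since $\G_1$ and $\go{G}$ are free abelian of the same rank, the induced surjection $\go{G}\to\G_1$ is an isomorphism; the same holds for $H$. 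By the functoriality of $\go{\cdot}$, $\theta$ then induces an automorphism $\theta_1$ of $\G_1$ compatible with $p_1$ on both sides. The desired automorphism is $\alpha:=\theta_1\times\bar\alpha$, whose restriction to $G$ agrees with $\theta$ on each coordinate and which permutes factors (fixing $\G_1$).

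The main obstacle lies in the second case: Theorem \ref{t:ee(S)}(4) is explicitly silent about abelian direct factors, so one must independently verify both that $\go{G}\to\G_1$ is a canonical isomorphism and that the resulting $\theta_1$ is genuinely compatible, via $p_1$, with $\theta$. The neatness of the embeddings and the fact that there is a \emph{unique} abelian factor $\G_1$ are precisely what make these identifications unambiguous and the resulting bookkeeping go through; by contrast, the non-abelian part is handled uniformly and canonically by Theorem \ref{t:ee(S)}(4).
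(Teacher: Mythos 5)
Your proof is correct and follows the route the paper intends: the paper offers no separate argument, simply asserting the proposition as a special case of Theorem \ref{t:ee(S)}(4), and your first paragraph is precisely the fleshed-out version of that deduction. Your second paragraph correctly identifies and repairs the one point the paper glosses over — Theorem \ref{t:ee(S)}(4) is stated only for products of non-abelian limit groups, so the abelian factor must be handled separately via the identification $\G_1\cong\go{G}\cong\go{H}$ (which the paper sets up in the preamble to the proposition) and the functoriality of $\go{\cdot}$.
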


Since there are only finitely many candidate permutations $\sigma$,
this proposition effectively reduces the isomorphism problem to the 
case where $\sigma$ is the identity, in other words to the following:

\medskip\noindent{\bf Question}: Given finitely presented full subdirect products
$G,H$ of a collection of limit groups $\G_1,\dots,\G_n$ (at most one of which
is abelian), can we find
automorphisms $\theta_i$ of $\G_i$ for each $i$, such that
$$(\theta_1,\dots,\theta_n)(G)=H?$$

Recall that the automorphism groups of limit groups can be effectively 
described \cite{BKM};
in particular one can find finite generating sets $X_i$ for each $Aut(\G_i)$.

\begin{prop}
There is a solution to the isomorphism problem in the case when at most
$2$ of the $\G_i$ are non-abelian.
\end{prop}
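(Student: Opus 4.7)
The plan is to reduce, via the central-extension structure provided by the (possibly absent) abelian factor, to the case where $G$ and $H$ are finite-index subgroups of a direct product of two non-abelian limit groups, and then to treat that case by parametrising such subgroups and invoking effective control of the automorphism groups of limit groups.

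Under the conventions in the preceding paragraphs, $G$ and $H$ are full subdirect products of $\G_1\times\cdots\times\G_n$ with at most one abelian factor ($\G_1$) and at most two non-abelian factors. If no $\G_i$ is non-abelian, $G$ and $H$ are finitely generated abelian and the problem is classical. If exactly one $\G_j$ is non-abelian, then $Z(G)=G\cap\G_1$ has finite index in $\G_1$ (by the envelope construction), and $G/Z(G)\cong p_j(G)\le\G_j$ is a finitely generated subgroup of a limit group, hence is itself a limit group; the iso problem for these quotients is handled by \cite{BKM,DG}, and existence of a product automorphism then comes down to matching the resulting central-extension data --- a finite check once the iso of quotients is fixed and the centre $Z(G)$ is effectively known. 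When two of the $\G_i$ are non-abelian, the analogous reduction works: $p_{23}(G)$ is a finitely presented full subdirect product of two non-abelian limit groups, so by Theorem~\ref{t:main} it satisfies VSP, and for two factors this forces it to have finite index in $\G_2\times\G_3$; thus $G$ is a central extension of a finite-index subgroup of $\G_2\times\G_3$ by $Z(G)$. A product automorphism splits as an action of $GL_k(\Z)$ on $\G_1$ and a product automorphism on $\G_2\times\G_3$, glued by the central-extension data. The core problem becomes: given finite-index subgroups $G_0,H_0\le A\times B$ with $A,B$ non-abelian limit groups, decide whether some $\alpha\in\mathrm{Aut}(A)$, $\beta\in\mathrm{Aut}(B)$ satisfy $(\alpha\times\beta)(G_0)=H_0$.

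Such a finite-index subdirect subgroup $G_0$ is described by a triple $(L_A,L_B,\phi)$, where $L_A=G_0\cap A\triangleleft A$ has finite index, $L_B$ similarly, and $\phi:A/L_A\to B/L_B$ is the isomorphism determined by $\phi(aL_A)=bL_B\iff(a,b)\in G_0$; similarly $H_0$ yields $(L_A',L_B',\phi')$. Existence of $(\alpha,\beta)$ is then equivalent to the conjunction of: (i) some $\alpha\in\mathrm{Aut}(A)$ sends $L_A$ to $L_A'$; (ii) some $\beta\in\mathrm{Aut}(B)$ sends $L_B$ to $L_B'$; (iii) the induced isomorphisms $\bar\alpha,\bar\beta$ of finite quotients intertwine $\phi$ and $\phi'$. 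The crux is deciding (i) and (ii) and enumerating which $\bar\alpha,\bar\beta$ can be realised.

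My approach to (i) is to compute the $\mathrm{Aut}(A)$-orbit of $L_A$ among the finitely many subgroups of $A$ of index $[A:L_A]$ by breadth-first search. Using a finite generating set for $\mathrm{Aut}(A)$ supplied by the effective JSJ theory for limit groups that underpins \cite{BKM,DG}, apply each generator and its inverse to $L_A$ and test equality against already-listed candidates using Todd--Coxeter in $A$. The orbit is finite, so the search halts and produces coset representatives of $\mathrm{Stab}_{\mathrm{Aut}(A)}(L_A)$; Schreier generators then furnish a finite generating set for this stabiliser and hence the image of $\mathrm{Stab}(L_A)$ in the finite group $\mathrm{Aut}(A/L_A)$. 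This determines the coset of realisable $\bar\alpha$ with $\alpha(L_A)=L_A'$. Doing the same for $B$, condition (iii) reduces to a finite check in finite groups. The principal obstacle is therefore the effective description of $\mathrm{Aut}(A)$ for a limit group $A$; granted this standard output of the limit-group literature, the remaining steps are finite enumeration, Todd--Coxeter, and classical computations in finite and finitely generated abelian groups.
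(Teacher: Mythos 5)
Your core argument for the two--non-abelian-factor case is correct, but it takes a genuinely different route from the paper, and your treatment of the abelian factor has a real gap.

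The paper's proof exploits a simplification that you miss: after passing to a \emph{neat} embedding (Proposition \ref{p:makeFP}), $G\cap\G_1$ has finite index in the abelian factor $\G_1$, and VSP applied to the (at most two) non-abelian factors then forces $G$ and $H$ to have \emph{finite index in the whole product} $D=\G_1\times\cdots\times\G_n$. One then computes the common index $k$, takes the finite generating set for $\Theta=\mathrm{Aut}(\G_1)\times\cdots\times\mathrm{Aut}(\G_n)$ from \cite{BKM}, and runs a single orbit computation on the finite set of index-$k$ subgroups of $D$; Proposition \ref{isos} says this decides isomorphism. There is no separate "central extension" analysis and no Goursat decomposition. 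Your approach --- describing a finite-index subdirect subgroup of $A\times B$ by the triple $(L_A,L_B,\phi)$, computing the $\mathrm{Aut}(A)$-orbit of $L_A$ by breadth-first search, extracting the image of $\mathrm{Stab}_{\mathrm{Aut}(A)}(L_A)$ in the finite group $\mathrm{Aut}(A/L_A)$ via Schreier generators, and reducing to a finite intertwining check --- is sound for the two-factor core and is structurally more informative (it isolates exactly which induced maps on the finite quotients are realisable), at the cost of more bookkeeping. Both routes rest on the same external input, namely an effectively computable finite generating set for the automorphism group of a limit group.

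The gap: when an abelian factor $\G_1$ is present you assert that a product automorphism is ``glued by the central-extension data'' and then solve only the core problem for $G_0=p_{23}(G)$ and $H_0=p_{23}(H)$. But solving (i)--(iii) for the non-abelian part and, separately, finding some $\theta_1\in\mathrm{Aut}(\G_1)$ with $\theta_1(Z(G))=Z(H)$ does not suffice: $G$ is the fibre product of $\G_1\to\G_1/Z(G)$ and $G_0\to G_0/(G\cap(\G_2\times\G_3))$ over an isomorphism $\psi$ of these finite quotients, and one must find $\theta_1$ and $(\alpha,\beta)$ \emph{jointly} so that the induced maps on the finite quotients intertwine $\psi$ with the corresponding $\psi'$ for $H$. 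This forces a further computation you never set up: the image in $\mathrm{Aut}(\G_1/Z(G))$ of the stabiliser of $Z(G)$ in $\mathrm{GL}_k(\Z)$, and likewise the image in $\mathrm{Aut}\bigl(G_0/(G\cap(\G_2\times\G_3))\bigr)$ of the subgroup of realisable pairs $(\bar\alpha,\bar\beta)$ preserving $G_0$ --- note that the latter is \emph{not} the product of the two stabiliser images you compute, because of condition (iii). The same Schreier-generator machinery does handle this, so the gap is fillable, but as written the abelian case is not proved. (The same criticism applies to your ``exactly one non-abelian factor'' case, where the matching of central-extension data is likewise only asserted.) Had you observed, as the paper does, that neatness makes $G$ of finite index in all of $D$, the entire gluing issue would have been absorbed into one orbit computation.
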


\begin{proof}  
By Proposition \ref{p:makeFP} we can effectively find neat embeddings for $G$ and $H$.
Hence there is no loss of generality in assuming that the given embeddings
$G,H\hookrightarrow D:=\G_1\times\cdots\times\G_n$ are neat.  In particular, at most
two of the $\G_i$ are non-abelian, at most one is abelian, and $G,H$ intersect
any abelian direct factor in a subgroup that has finite index in that factor.
The VSP property then ensures that each of $G,H$ has finite index in
$D$.

The index can be computed in each case using the Todd-Coxeter algorithm, and we
may assume that the two indices are equal (to $k$, say).  Now by
\cite{BKM} we can find
a finite set $X=X_1\times\cdots\times X_n$ of generators for $\Theta
=Aut(\G_1)\times\cdots\times Aut(\G_n)$.

It is straightforward to construct the permutation graph for the action
of $\Theta$ on the finite set of index $k$ subgroups, and then to check
whether or not $G$ and $H$ lie in the same component of this graph.  This happens if and only if $G$
is isomorphic to $H$ via an automorphism of $D$ that preserves the direct
factors.  By Proposition \ref{isos}, this suffices to solve the problem.

\end{proof}

One possible approach to the more general case is to proceed
by induction on the number of direct factors.  Projecting a 
finitely presentable subdirect product to the product of fewer
factors again gives a finitely presentable group, so by induction
we can assume that the corresponding projections of our
two subgroups are isomorphic.  But for the moment we do
not see how this information might be used to complete a proof that the
isomorphism problem is solvable.

\end{document}